\documentclass[11pt]{article}

\usepackage{graphicx,amsthm}
\usepackage{xspace}
\usepackage{xcolor} 
\usepackage{hyperref} 
\usepackage{amssymb,amsmath,enumerate,bbold}
\usepackage[mathscr]{euscript}
\usepackage[all,2cell]{xy}
\UseAllTwocells

\newdir{|>}{!/4.5pt/@{|}*:(1,-.2)@^{>}*:(1,+.2)@_{>}}

\mathcode`\:="603A     
\mathcode`\<="4268     
\mathcode`\>="5269     
\mathchardef\gt="313E  
\mathchardef\lt="313C  
\mathchardef\colon="303A  

\theoremstyle{plain}

\newtheorem{thm}{Theorem}[section]
\newtheorem{cor}[thm]{Corollary}
\newtheorem{lem}[thm]{Lemma}
\newtheorem{prop}[thm]{Proposition}

\theoremstyle{definition}

\newtheorem{rem}[thm]{Remark}

\newtheorem{exa}[thm]{Example}
\newtheorem{exas}[thm]{Examples}
\newtheorem{defi}[thm]{Definition}

\def\eg{{\em e.g.}\xspace} \def\ie{{\slshape i.e.}\xspace}
\def\cf{cf.~} 
\def\Item[#1]{\item[]\mbox{}\kern-26pt{\bf #1}}

\def\dfn#1{{\bfseries\itshape #1}}

\def\nec{\necmod-stable\xspace}
\def\comonadic{interior\xspace}
\def\acomonadic{an interior\xspace}
\def\Comonadic{Interior\xspace}

\newcommand{\refToSect}[1]{Section~{\ref{sect:#1}}}
\newcommand{\refToDef}[1]{Definition~{\ref{def:#1}}}
\newcommand{\refToDefItem}[2]{\refToDef{#1}{(\ref{def:#1:#2})}}
\newcommand{\refToThm}[1]{Theorem~{\ref{thm:#1}}}
\newcommand{\refToProp}[1]{Proposition~{\ref{prop:#1}}}
\newcommand{\refToPropItem}[2]{condition {(\ref{prop:#1:#2})} in \refToProp{#1}}

\newcommand{\refToCor}[1]{Corollary~{\ref{cor:#1}}}
\newcommand{\refToEx}[1]{Example~{\ref{ex:#1}}}

\def\ple#1{\ensuremath{{({#1})}}}

\def\vuoto{}
\newcommand{\FUN}[4]{\ensuremath{{#2}#1{#3}\rightarrow{#4}}}
\newcommand{\fun}[3]{\relax\def\testa{#1}\relax\ifx\testa\vuoto
  \relax\FUN{}{}{{#2}}{{#3}}\else\relax\FUN{:}{{#1}}{{#2}}{{#3}}\fi}
\newcommand{\dom}{\mathsf{dom}}
\newcommand{\PW}[1]{\ensuremath{\mathop{\mathscr{P}{#1}}}}
\newcommand{\pw}[1]{\relax\def\testa{#1}\relax\ifx\testa\vuoto
  \relax\PW{}\else\relax\PW{\left(#1\right)}\fi}



\def\tt{\ensuremath{\textsf{t\kern-.3ex t}}}
\def\ff{\ensuremath{\textsf{f\kern-.3ex f}}}
\let\Land\wedge

\let\ForalL\forall \def\Forall#1.{\ForalL_{#1}}
\let\ExistS\exists \def\Exists#1.{\ExistS_{#1}}
\def\Lbang{\mathsf{!}} 
\def\Ltensor{\otimes} 
\def\Lunit{1} 
\def\Llimpl{\multimap} 
\def\Lsup{\bigvee} 
\def\Qtl{\mathit{V}} 

\DeclareFontFamily{OT1}{pzc}{}
\DeclareFontShape{OT1}{pzc}{m}{it}{<->s*[1.12]pzcmi7t}{}
\DeclareMathAlphabet{\mathpzc}{OT1}{pzc}{m}{it}
\def\ct#1{\ensuremath{\mathpzc{#1}}}
\def\Ct#1{\ensuremath{\mathbf{#1}}}

\def\id#1{\ensuremath{\mathrm{id}_{#1}}}
\def\Id#1{\ensuremath{\mathrm{Id}_{#1}}}
\def\op{^{\mbox{\normalfont\scriptsize op}}}
\def\blank{\mathchoice{\mbox{--}}{\mbox{--}}
{\mbox{\scriptsize--}}{\mbox{\tiny--}}}

\newcommand{\B}{\ct{B}\xspace}
\newcommand{\C}{\ct{C}\xspace}
\newcommand{\D}{\ct{D}\xspace} 
\newcommand{\K}{\ct{K}\xspace} 
\newcommand{\topos}{\ct{E}\xspace}
\newcommand{\atopos}{\ct{F}\xspace} 
\newcommand{\oneAr}[3]{\ensuremath{{#1}:{#2}\rightarrow{#3}}}
\newcommand{\twoAr}[3]{\ensuremath{{#1}:{#2}\Rightarrow{#3}}}
\def\tdot{\textbf{.}}
\def\lsta{\vrule depth4pt width0pt}
\def\lstb{\vrule height5pt width0pt}
\def\arnta[#1]{\ar[#1]|-*=0[@]{\lsta\tdot}}
\def\arntb[#1]{\ar[#1]|-*=0[@]{\lstb\tdot}}
\newcommand{\TNat}[3]{\ensuremath{{#1}:{#2}
 \stackrel{\makebox{\kern-.3ex\tdot}}\rightarrow{#3}}}

\newcommand{\Op}[1]{\ensuremath{{#1}\op}}
\newcommand{\idobj}[1]{\ensuremath{e_{#1}}}
\newcommand{\idar}[1]{\ensuremath{1_{#1}}}
\newcommand{\Sub}[2][{}]{\relax\def\testa{#2}\relax
  \ensuremath{\mathsf{Sub}_{#1}\relax\ifx\vuoto\testa\relax\else
    \left(#2\right)\fi}}

\def\set#1#2{\ensuremath{\left\{#1\left|\strut\,#2\right.\right\}}}

\newcommand{\Set}{\ct{Set}\xspace}

\newcommand{\Pos}{\ct{Pos}\xspace} 
\newcommand{\One}{\ct{\mathbf{1}}}

\newcommand{\Topop}{\ct{Opn}\xspace} 
\newcommand{\Psh}[1]{\ensuremath{[\Op{#1},\Set]}}
\newcommand{\Fam}[1]{\ensuremath{#1\text{-}\ct{Fam}}}
\newcommand{\IdxPos}{\Ct{Dtn}\xspace} 
\newcommand{\NecIdxPos}{\ensuremath{\necmod\mbox{-}\IdxPos}\xspace}

\def\enab#1{\ensuremath{#1}\xspace}
\newcommand{\cmd}{\enab{\mathsf{K}}}
\newcommand{\acmd}{\enab{\mathsf{J}}}
\newcommand{\cmdfun}{\enab{K}}
\newcommand{\acmdfun}{\enab{J}}
\newcommand{\cmdlift}{\enab{\kappa}}
\newcommand{\acmdlift}{\enab{\psi}}
\newcommand{\counit}{\enab{\epsilon}}
\newcommand{\comul}{\enab{\mu}}
\newcommand{\cmu}{\enab{\mu}}
\newcommand{\cun}{\enab{\nu}}
\edef\cdrestoreat{
\noexpand\catcode\lq\noexpand\@=\the\catcode\lq\@}\catcode\lq\@=11
\def\unita{\@ifnextchar[{\UNita}{\UNita[\cmd]}}
\def\UNita[#1]{\enab{\varsigma^{#1}}}
\def\Forget{\@ifnextchar[{\FOrget}{\FOrget[\cmd]}}
\def\ForgetLift{\@ifnextchar[{\FOrgetlift}{\FOrgetlift[\cmd]}}
\def\FOrget[#1]{\ensuremath{\mathit{U}^{#1}}}
\def\FOrgetlift[#1]{\ensuremath{\iota^{#1}}}
\cdrestoreat

\newcommand{\Cmd}[1]{\ensuremath{\mathsf{Cmd}(#1)}}


\newcommand{\unit}{\enab{\eta}}


\newcommand{\LAdj}{\ensuremath{L}\xspace}
\newcommand{\LAdjLift}{\ensuremath{\lambda}\xspace}
\newcommand{\RAdj}{\ensuremath{R}\xspace}
\newcommand{\RAdjLift}{\ensuremath{\rho}\xspace}
\newcommand{\adj}{\ensuremath{\mathbb{A}}\xspace}
\newcommand{\aadj}{\ensuremath{{\mathbb{B}}}\xspace}

\newcommand{\Adj}[1]{\ensuremath{\mathsf{Adj}(#1)}}

\newcommand{\Doc}{\ensuremath{\mathit{P}}\xspace}
\newcommand{\aDoc}{\ensuremath{\mathit{Q}}\xspace}
\newcommand{\bDoc}{\ensuremath{\mathit{M}}\xspace}

\newcommand{\DReIdx}[2]{\ensuremath{{#1}{#2}}}
\newcommand{\order}{\le}

\newcommand{\EM}[2]{\ensuremath{{{#1}^{#2}}}}

\newcommand{\necmod}{\ensuremath{\Box}\xspace}

\def\shopr#1{\ensuremath{\mathrm{#1}}\xspace}
\newcommand{\AMFun}{\shopr{AM}}
\newcommand{\MAFun}{\shopr{MA}}
\newcommand{\CMFun}{\shopr{CM}}
\newcommand{\MCFun}{\shopr{MC}}
\newcommand{\EMFun}{\shopr{EM}}
\newcommand{\EMAFun}{\shopr{EMA}}
\newcommand{\IncFun}{\shopr{Inc}}
\newcommand{\CmdFun}{\shopr{Cmd}}

\def\shbopr#1{\ensuremath{\mathrm{#1}}\xspace}
\newcommand{\TopInt}{\shbopr{int}}
\newcommand{\IntOp}{\shbopr{j}}
\newcommand{\KFr}{\ensuremath{K}\xspace}

\begin{document}

\title{Doctrines, modalities and comonads}

\author{Francesco Dagnino\thanks{DIBRIS, Universit\`a di Genova,
email: francesco.dagnino@dibris.unige.it},
Giuseppe Rosolini\thanks{DIMA, Universit\`a di Genova,
  email: rosolini@unige.it}}
\date{}

\maketitle
\begin{abstract}
Doctrines are categorical structures very apt to study logics of different nature within
a unified environment: the 2-category \IdxPos of doctrines.
Modal \comonadic operators are characterised as particular adjoints in the 2-category
\IdxPos. We show that they can be constructed from comonads in \IdxPos as well as from
adjunctions in it, and the two constructions compare. Finally we show the amount of
information lost in the passage from a comonad, or from an adjunction, to the modal
\comonadic operator.

The basis for the present work is provided by some seminal work of John Power.
\end{abstract}

\section{Introduction}\label{sect:introduction}
The approach to logic proposed by F.W.~Lawvere via hyperdoctrines has proved very fruitful
as it provides an extremely suitable environment where to analyse both syntacic aspects of
logic and semantic aspects as well as compare one with the other, see
\cite{LawvereF:adjif,LawvereF:equhcs}. The suggestion is to see a logic as a functor
\fun{\Doc}{\C\op}{\Pos} from the opposite of a category to the category of posets and
monotone functions where the category \C collects the ``types'' of the logic and terms in
context, a poset $\Doc(c)$ presents the ``properties'' of the type $c$ with the order
relation describing their ``entailments''.
The reader is referred to \refToSect{operanddoc} for the precise details, 
but may just keep in mind, for the present discussion, that the
contravariant powerset functor \fun{\pw{}}{\ct{Set}\op}{\Pos} is an
instance of a doctrine.

One of the main points of Lawvere's structural approach to logic is that all the logical
operators are obtained from adjunctions. That view in itself is very powerful and
contributes to unifying many different aspects in logic. In the present paper, we show
that also a wide class of modal operators, namely, those satisfying axioms T and 4 
as in \refToDef{modality}, is obtained from adjunctions.

Typically, modalities are unary logical operators, which are quite well-understood 
in the context of propositional logic.  
However, their meaning is less clear 
in a typed logical formalism.  
In this setting, there are various semantics which are interrelated, 
and we show that many of these are instances of the general situation of an
adjunction between two homomorphisms of doctrines.

Since they are structured categories, doctrines get swiftly organised in a
2-category. And, as we learned also from the works of John Power, in a 2-category one can
develop a very productive theory of monads and comonads, extending the elementary case of
the 2-category \ct{Cat} of small categories, functors and natural transfomations.

Doctrines are a rather simple categorical framework for logic, but still capable to cover
a large range of examples. We could have considered more general settings such as indexed
preorders (equivalently, faithful fibrations) or even arbitrary fibrations, but we
preferred to keep things at a very simple level as already there one finds many
interesting examples. Yet, after this first step our plan is to extend results to
general fibrations in future work.

We show that an adjunction in the 2-category of doctrines gives rise to a doctrine with a
modal operator.
An adjunction between doctrines is very much like an adjunction between categories: roughly, 
it consists of two doctrines \fun{\Doc}{\C\op}{\Pos} and \fun{\aDoc}{\D\op}{\Pos} and two homomorphisms of doctrines connecting them, 
which should be thought of as an interpretation of \Doc in \aDoc (the left adjoint) and an interpretation of \aDoc in \Doc (the right adjoint). 
Such a situation can be summarised by a modal logic which uses the logic \aDoc to describe properties of  types in \C (the base category of \Doc) 
and the modal operator to recover (an image of) properties described by \Doc. 
In a sense, we extend the logic \Doc through the adjunction to a richer logic and use a modal operator to keep memory of the original logic. 
As we said, many standard approaches to the semantics of modal logic are instances of such construction. 

Taking a slightly different perspective, 
we show that also a comonad in the 2-category of doctrines determines a doctrine with a modal operator, this time  on
the category of coalgebras for the comonad. 
Intuitively, we get a logic where types have a dynamics, given by the coalgebra structure,  
and the modal operator specifies when a property is invariant for such dynamics.

These two constructions are tightly related. 
Relying on results in
\cite{BlackwellR:twodmt}, we show that every comonad in the 2-category of doctrines determines an adjunction, 
hence, also a modal operator. 
In fact, the construction starting from comonads is defined in this way. 
On the other hand, every adjunction determins a comonad, hence a modal operator. 
However, the two construction starting from an adjunction do not coincide, but we show they can be canonically 
compared by a homomorphism of doctrines preserving the modal operator. 

We further our analysis measuring in a categorical form how the
passage to a modal operator hides part of the structure that generated it.

In \refToSect{operanddoc}
we introduce \emph{interior operators} on doctrines, which are the class of modal operators we are interested in. 
In \refToSect{background}
we recall basic notions about comonads and adjunctions in a general 2-category. 
In \refToSect{twocatdoc}
we define the 2-categories of doctrines and doctrines with interior operators that are at the core of our analysis. 
In \refToSect{adjoint}
we show how to construct an interior operator starting from an adjunction between doctrines, while 
in \refToSect{comonad}
we describe the analogous construction starting from a comonad on a doctrine. 
Finally, in \refToSect{picture}
we compare the two constructions showing they are part of local adjunctions, in the sense
of \cite{BettiP88}, between the 2-category of doctrines with modal operator and,
respectively, the 2-category of adjunctions and that of comonads in the 2-category of
doctrines. 
In Appendix~\ref{append}
we sketch an example on how to use our construction to obtain models
of the bang modality of linear logic.

\section{\Comonadic operators and doctrines}\label{sect:operanddoc}

A simple semantic approach to propositional standard modal logic (satisfying axioms T and 4)
would consider an \dfn{\comonadic operator} on a poset \ple{H,\order}, \ie a monotone
function \fun{\IntOp}{H}{H} such that, for all $x \in H$, 
$\IntOp(x) \order x$ and $\IntOp(x) \order \IntOp(\IntOp(x))$, see
\eg\cite{EsakiaL:intlmvt}. The intuition is that the elements of the poset are an
interpretation of (some kind of) formulas, the order relation realizes the entailment
between them, and the \comonadic operator \fun{\IntOp}{H}{H} acts as a modality on
formulas.

From a similar semantic point of view, one could consider a many-sorted logic to be a
\dfn{doctrine} \fun{\Doc}{\C\op}{\Pos}, \ie a (contra)variant functor from a category
\C to the category \Pos of posets and monotone functions. Such a functor is often called an
\dfn{indexed poset} in consonancy with the more general notion of indexed category.

The intuition for a doctrine is that the objects of the category provide the interpretations
of the sorts in the logic and the arrows interpret terms between sorts. For an
object $X$ in \C, the poset $\Doc X$ gives the interpretations for the formulas expressing
the properties of ``arbitrary elements'' of $X$---although no set-theoretic determination of
$X$ may have been provided, see \cite{LawvereF:adjif,LawvereF:equhcs}, but also
\cite{JacobsB:catltt,MaiettiME:eleqc}.

Conjoining these two semantic approaches it is quite natural to consider \comonadic
operators on a doctrine as an extension to many-sorted logic, of the propositional
modal logic satisfying axioms T and 4, like the \necmod-modality, a.k.a. \emph{necessity}
modality, of S4 modal logic.

\begin{defi}\label{def:modality}
Let \fun{\Doc}{\C\op}{\Pos} be a doctrine.
An \dfn{\comonadic} modal operator on \Doc is a natural
transformation \TNat{\necmod}{\Doc}{\Doc} such that,  
for each object $X$ in \C, the following inequalities hold:
\begin{enumerate}[(i)]
\item\label{def:modality:1} $\necmod_X \order_X \id{\Doc X}$
\item\label{def:modality:2} $\necmod_X \order_X \necmod_X \circ \necmod_X$
\end{enumerate}
\end{defi}

Note that standard axioms of the S4 modal operator, see \eg\cite{AwodeyKK14}, require
further structure. But here we consider the 
very simple structure of a poset on the fibres because we want to
focus mainly on the comonadic structure of the modality.

In the following, an element $\alpha\in\Doc X$ of the form
$\alpha=\necmod_X\beta$ for some $\beta\in\Doc X$  will be called
\dfn{\nec}.
An immediate consequence of \refToDef{modality}, obtained combining
the two requirements on $\necmod$, is that
$\necmod_X=\necmod_X\circ\necmod_X$.
Hence \nec elements are the fixed points of
$\necmod_X$, that is, those elements $\alpha\in\Doc X$ such that
$\necmod_X\alpha=\alpha$.  

\begin{exas}\label{ex:into}
Let \fun{\IntOp}{H}{H} be an \comonadic operator on the poset \ple{H,\order}, \ie a monotone
function such that, for all $x \in H$, 
$\IntOp(x) \order x$ and $\IntOp(x) \order \IntOp(\IntOp(x))$.
Given this, we can consider two examples of doctrines with \acomonadic operator:

\begin{enumerate}[(a)] 
\item Let \fun{\hat{H}}{\One\op}{\Pos} be the functor defined on the
category with a single object $\star$ and a single arrow $\id{\star}$
as $\hat{H}(\star) = H$. Then $\IntOp$ is \acomonadic operator
on $\hat{H}$.
\item The functor \fun{H^{(\blank)}}{\Set\op}{\Pos}, 
which maps a set $X$ to $H^X$ with the pointwise order and a function
\fun{t}{X}{Y} to the monotone function \fun{\blank\circ t}{H^Y}{H^X}, 
is a doctrine. The natural transformation
\TNat{\IntOp\circ\blank}{H^{(\blank)}}{H^{(\blank)}} given by
postcomposition with $\IntOp$ is an \acomonadic operator on
$H^{(\blank)}$.
\end{enumerate}

Note that the example in (a) is obtained from that in (b) by precomposing the doctrine
\fun{H^{(\blank)}}{\Set\op}{\Pos} with the (opposite of the) functor
\fun{\star\mapsto\{0\}}{\One}{\Set} which maps the one object $\star$ to a(ny) singleton
set.
\end{exas}

\begin{exa}\label{ex:topol}
Consider the category $\Topop$ of topological spaces and open
continuous maps. Define $\fun{\Doc}{\Topop\op}{\Pos}$ as
$\Doc\ple{X,\tau}=\pw{X}$, the powerset of the set $X$, and
$\DReIdx{\Doc}{t}=t^{-1}$, the inverse image along the open
continuous function \fun{t}{\ple{X,\tau}}{\ple{Y,\sigma}}
Let \ple{X,\tau} be a topological space, then $\tau$ is the set of
fixed points of the \comonadic operator
$\fun{\TopInt_\tau}{\pw{X}}{\pw{X}}$, which maps a subset $A\subseteq X$
to its topological interior.
Since $\TopInt_\tau(A)\subseteq A$ and
$\TopInt_\tau(A)\subseteq\TopInt_\tau(\TopInt_\tau(A))$, for each
$A\subseteq X$, to get an \acomonadic
operator on $\Doc$ we need to prove that $\TopInt_\tau$ is natural.  
Indeed, consider an open continuous map
$\fun{t}{\ple{X,\tau}}{\ple{Y,\sigma}}$, and a subset 
$B\subseteq Y$. So
$t^{-1}(\TopInt_\sigma(B))\subseteq\TopInt_\tau(t^{-1}(B))$ by
continuity of $t$. But also $t(\TopInt_\tau(t^{-1}(B)))\subseteq\TopInt_\sigma(B)$
since the set $t(\TopInt_\tau(t^{-1}(B)))\subseteq B$ is open by openness of $t$. So
$t^{-1}(\TopInt_\sigma(B))=\TopInt_\tau(t^{-1}(B))$ which proves that \TNat{\TopInt}{P}{P}.
\end{exa}

\begin{exa}\label{ex:kripke} 
A \dfn{Kripke frame} is a pair $\KFr = \ple{W,R}$ where $W$ is the set
of \dfn{possible worlds} and $R\subseteq W\times W$ is the
\dfn{accesibility relation}.
On the poset $\pw{W}$ ordered by set inclusion, consider the
monotone function $\fun{\IntOp_R}{\pw{W}}{\pw{W}}$ defined as
\[ \IntOp_R(A) = \set{w \in W}{R(w) \subseteq A} \]
where $R(w) = \set{v \in W}{\ple{w,v} \in R}$.
When $R$ is reflexive and transitive (\ie a preorder on $W$), for any
$w\in W$, we have $w\in R(w)=R(R(w))$. Hence $\IntOp_R$ is \acomonadic operator.
\begin{enumerate}[(a)] 
\item As a particular instance of \refToEx{into}(b), postcomposition with the
\comonadic operator 
\TNat{\IntOp_R\circ\blank}{\pw{W}^{(\blank)}}{\pw{W}^{(\blank)}} 
endows the doctrine $\fun{\pw{W}^{(\blank)}}{\Set\op}{\Pos}$ with 
an \acomonadic operator.
Intuitively, given a ``formula'' $\alpha\in\pw{W}^D$, for an element
$x$ of $D$, the set $\alpha(x)\subseteq W$ consists of those worlds
where $x$ satisfies $\alpha$. Indeed, one can see the data consisting
of the Kripke frame \KFr and the set $D$ as a constant domain skeleton 
as in Definition~1 in \cite{BraunerG07}, where the fibres
$\pw{W}^{D^n}$ enlist all possible interpretations for predicates as $n$ varies.
\item Another doctrine with \acomonadic operator
built from a Kripke frame $\KFr$ with a reflexive and transitive
accessibility relation can be obtained via $W$-indexed families. 
Consider the category \Fam{W} whose
\begin{description}
\Item[objects] are $W$-indexed families of sets, that is, pairs
$X=\ple{\overline{X},(X_w)_{w\in W}}$, where
$X_w\subseteq\overline{X}$, for all $w\in W$, and where
\Item [an arrow] \fun tXY is a function
\fun{t}{\overline{X}}{\overline{Y}} such that, 
for each $w \in W$, $X_w \subseteq t^{-1}(Y_w)$.
\end{description}
Consider the subobject functor
$\fun{\Sub[\Fam{W}]{}}{\Fam{W}\op}{\Pos}$ mapping a $W$-indexed
family to the poset 
$\Sub[\Fam{W}]{X}$ of its subfamilies, \ie a family $A$
such that $\overline{A}\subseteq\overline{X}$ and $A_w \subseteq X_w$
for each $w \in W$, ordered by pointwise inclusion.  
The action on arrows is defined pointwise by inverse image. 
For each $W$-indexed family $X$ the function
$\fun{\necmod_X}{\Sub[\Fam{W}]{X}}{\Sub[\Fam{W}]{X}}$
\[ \left(\necmod_X A\right)_w = \bigcap_{v \in R(w)} A_v \]
is clearly monotone; and it satisfies conditions (\ref{def:modality:1}) and
(\ref{def:modality:2}) in \refToDef{modality} for the same reason as in
the previous example. Moreover, it is natural in $X$ since, for each
function $\fun{t}{Y}{X}$, we have  
\[t^{-1}\big(\left(\necmod_X A\right)_w\big)
=t^{-1} \bigg(\bigcap_{v \in R(w)}A_v\bigg) 
=\bigcap_{v \in R(w)}t^{-1}(A_v)=(\necmod_Yt^{-1}(A))_w\]
for any $w\in W$. Though surprising, we shall see in \refToEx{sndex} that this example is a
universal completion of the previous one in (a).

Intuitively, given a $W$-indexed family $D$, for each $w\in W$, the subset $D_w$ consists
of those elements of $\overline{D}$ which are present at the world $w$, and, given a
``formula'' $\alpha\in \Sub[\Fam W]{D}$, for each world $w\in W$, the set $\alpha_w$
consist of those elements $x$ which are present and satisfy $\alpha$ at $w$.
Indeed, one can see the data consisting
of the Kripke frame \KFr and the $w$-indexed family $D$ as a varying domain skeleton 
as in Definition~7 in \cite{BraunerG07}, with few additional requirements, 
where the fibres $\Sub[\Fam{W}]{D^n}$ enlist all possible interpretations for predicates
as $n$ varies.
\item Yet another possibility is to consider a doctrine
over the category of presheaves on the preorder $\KFr$; we shall
discuss this in \refToEx{presh}, as a particular case of a more
general construction.  
\end{enumerate} 
\end{exa}

\section{Adjunctions and comonads in a 2-category}\label{sect:background}

In this section we recall basic notions which can be introduced in an arbitrary 2-category
with the purpose to use them in the particular case of the 2-category of doctrines.

Given a (strict) 2-category \K, we denote 0-cells as $A$, $B$, $C,\ldots$, which we shall
refer to also as \dfn{objects} of \K; a 1-cell, also referred to as \dfn{1-arrow}, from $A$
to $B$ will be written as $\oneAr{a}{A}{B}$ while a 2-cell, or \dfn{2-arrow}, from the
1-cell $a$ to the 1-cell $b$ will be written as $\twoAr{\alpha}{a}{b}$. Composition of
1-cells and horizontal composition of 2-cells is denoted as $\circ$, and often omitted---we
shall use it mainly to emphasise the composition of functions and functors. 
The identity 1-cell on the object $A$ is denoted by \idobj{A} and 
the identity 2-cell on the 1-cell $a$ is denoted by \idar{a}.
A horizontal composition with a 2-identity cell \idar{a} will be written simply as $\alpha
a$. Vertical composition of 2-cells is denoted as $\cdot$. So, for instance, the defining
property of vertical composition of natural transformations would be written as something
like $(\psi\cdot\phi)_C=\psi_C\circ\phi_C$.

Many well-known concepts from standard category theory can be transferred to an arbitrary
2-category $\K$; a basic reference is \cite{Street72}.
\begin{defi}\label{def:adjcom}
Let \K be a 2-category. 
\begin{enumerate}[(i)]
\item\label{def:adjcom:1}
An \dfn{adjunction} \adj in $\K$ consists of the following data: two
objects $C$ and $D$, 
two 1-arrows $\oneAr{l}{C}{D}$ and $\oneAr{r}{D}{C}$, and
two 2-arrows $\twoAr{\unit}{\idobj{C}}{rl}$ and
$\twoAr{\counit}{lr}{\idobj{D}}$,  
such that the following triangles of 2-arrows commute
\begin{equation}\label{triai}
\xymatrix@C=3em@R=2em{l\ar@{=>}[r]^-{l\unit}\ar@{=>}[rd]_-{\idar{l}}&
lrl\ar@{=>}[d]^-{\counit l}\\&l}\qquad
\xymatrix@C=3em@R=2em{r\ar@{=>}[r]^-{\unit r}\ar@{=>}[rd]_-{\idar{r}}&
rlr\ar@{=>}[d]^-{r\counit }\\&r.}
\end{equation}
\item\label{def:adjcom:2}
A \dfn{comonad} $\mathbb{c}$ in \K consists of an object $A$, a 1-arrow
$\oneAr{c}{A}{A}$, and two 2-arrows $\twoAr{\cun}{c}{\idobj{A}}$ and
$\twoAr{\cmu}{c}{cc}$, such that the following diagrams of 2-arrows
commute 
\begin{equation}\label{dcom}
\xymatrix@C=3em@R=2em{
&c\ar@{=>}[ld]_-{\idar{c}}\ar@{=>}[rd]^-{\idar{c}}\ar@{=>}[d]^-{\cmu}\\
c&cc\ar@{=>}[l]^-{c\cun}\ar@{=>}[r]_-{\cun c}&c}\qquad
\xymatrix@C=3em@R=2em{c\ar@{=>}[r]^-{\cmu}\ar@{=>}[d]_-{\cmu}&
cc\ar@{=>}[d]^-{c\cmu}\\cc\ar@{=>}[r]^-{\cmu c}&ccc.}
\end{equation}
\item\label{def:adjcom:3} In line with \cite{Street72,PowerW02}, one says that
\K{} \dfn{admits the Eilenberg-Moore construction} for the comonad \ple{A,c,\cmu,\cun}
if there is a universal representation of the following 2-problem: given an object $B$ in \K,
objects are pairs \ple{x,\xi} with
\begin{equation}\label{udata}
\vcenter{\xymatrix@C=4em@R=1ex{&A\ar[dd]^-{c}_(.55){}="t"\\
B\ar[ru]^-{x}\ar[rd]_-{x}^(.68){}="s"\\
\ar@{=>}"s";"t"^-{\xi}&A}}
\end{equation}
and such that the diagrams of 2-arrows 
\begin{equation}\label{1cohe}
\vcenter{\xymatrix{x\ar@{=>}[r]^-{\xi}\ar@{=>}[d]_-{\xi}&
cx\ar@{=>}[d]^-{\cmu x}\\
cx\ar@{=>}[r]_-{c\xi}&ccx}}\qquad
\vcenter{\xymatrix{x\ar@{=>}[r]^-{\xi}\ar@{=>}[rd]_-{\idar{x}}&
cx\ar@{=>}[d]^-{\cun x}\\&x}}
\end{equation}
commute; an arrow \oneAr{\gamma}{\ple{x,\xi}}{\ple{y,\zeta}} is a
2-arrow \twoAr{\gamma}{x}{y} such that the following diagram commutes
\begin{equation}\label{2cohe}
\vcenter{\xymatrix{x\ar@{=>}[d]_{\gamma}\ar@{=>}[r]^-{\xi}&
cx\ar@{=>}[d]^-{c\gamma}\\
y\ar@{=>}[r]^-{\zeta}&cy.}}
\end{equation}
\end{enumerate}
\end{defi}
Spelling out the data for an Eilenberg-Moore construction for the comonad
$\mathbb{c}=\ple{A,c,\cmu,\cun}$, it requires that there is an object
\EM{A}{\mathbb{c}} in \K together with a 1-arrow and a 2-arrow as in
\[\xymatrix@C=4em@R=1ex{&A\ar[dd]^-{c}_(.55){}="t"\\
\EM{A}{\mathbb{c}}
\ar[ru]^-{u^{\mathbb{c}}}
\ar[rd]_-{u^{\mathbb{c}}}^(.68){}="s"\\
\ar@{=>}"s";"t"^-{\omega^{\mathbb{c}}}&A}\]
which satisfy the commutative diagrams in (\ref{1cohe}). Moreover, for any
object $B$ in \K, every pair \ple{x,\xi} as in (\ref{udata}) satisfying (\ref{1cohe}) can
be obtained by precomposition
\[\vcenter{\xymatrix@C=4em@R=1ex{&A\ar[dd]^-{c}_(.55){}="t"\\
B\ar[ru]^-{x}\ar[rd]_-{x}^(.68){}="s"\\
\ar@{=>}"s";"t"^-{\xi}&A}}=
\vcenter{\xymatrix@C=1.5em@R=1ex{&&&A\ar[dd]^-{c}_(.55){}="t"\\
B\ar[r]^-{x'}&\EM{A}{\mathbb{c}}
\ar[rru]^-{u^{\mathbb{c}}}
\ar[rrd]_-{u^{\mathbb{c}}}^(.68){}="s"\\
&&\ar@{=>}"s";"t"^-{\omega^{\mathbb{c}}}&A}}\]
for a unique 1-arrow \oneAr{x'}{B}{\EM{A}{\mathbb{c}}}, and similarly for arrows
\oneAr{\gamma}{\ple{x,\xi}}{\ple{y,\zeta}} between pairs:
\[\vcenter{\xymatrix@C=4em@R=1ex{B\ar@<1ex>@/^/[r]^-x_{}="s"
\ar@<-1ex>@/_/[r]_-y^{}="t"&A\ar@{=>}"s";"t"^-{\gamma}}}=
\vcenter{\xymatrix@C=1.5em@R=1ex{B\ar@<1ex>@/^/[rr]^-{x'}_{}="s"
\ar@<-1ex>@/_/[rr]_-{y'}^{}="t"&&
\EM{A}{\mathbb{c}}\ar@{=>}"s";"t"^-{\gamma'}\ar[r]^-{u^{\mathbb{c}}}&A}}\]
for a unique 2-arrow \twoAr{\gamma'}{x'}{y'} in \K.

In case the universality condition is verified for each comonad in \K, it can be restated
in terms of a 2-adjunction after introducing the appropriate\footnote{There are many
reasonable 2-categories whose objects are adjunctions in \K. In this paper, the 2-category
\Adj{\K} we introduce is the one that gives rise to the 2-adjunction with \Cmd{\K}.}
2-category \Adj{\K} of adjunctions in \K and the 2-category \Cmd{\K} of comonads in
\K. Since we can safely refer the reader to \cite{PowerW02} for a very clear presentation
of the general setup, we limit ouselves to recapping the main diagram of 2-adjunctions:
\begin{equation}\label{maind}
\vcenter{\xymatrix{
\K \ar@/^8pt/[rr]^{\IncFun} \ar@{}[rr]|{\bot}  &&
\Cmd{\K} \ar@/^8pt/[ll]^{\EMFun} \ar@/_8pt/[rr]_{\EMAFun} \ar@{}[rr]|{\bot} &&
\Adj{\K} \ar@/_8pt/[ll]_{\CmdFun}}}
\end{equation}
where the 2-functor $\IncFun$ sends an object $A$ in \K to the identity comonad
\ple{A,\idobj{A},\idar{\idobj{A}},\idar{\idobj{A}}} on $A$, and the 2-functor $\EMFun$
sends a comonad $\mathbb{c}=\ple{A,c,\cmu,\cun}$ to its Eilenberg-Moore object
\EM{A}{\mathbb{c}}; while the 2-functor $\CmdFun$ sends an adjunction
$\adj=\ple{C,D,l,r,\unit,\counit}$ to the associated comonad \ple{D,lr,l\unit r,\counit},
and the 2-functor $\EMAFun$ sends a comonad $\mathbb{c}$ to the Eilenberg-Moore adjunction
between $A$ and \EM{A}{\mathbb{c}}.

\begin{exa}
Although the terminology already suggests clearly the kind of generalization adopted, we
hasten to point out that in the 2-category \ct{Cat} of (small) categories, functors and
natural transfomations, the definitions in (i) and (ii) instantiate exactly to the usual
notions of (standard) adjunction between categories $l\dashv r$---where $\eta$ and
$\epsilon$ are the unit and the counit of the adjunction---, and to comonads. Clearly,
\ct{Cat} admits the Eilenberg-Moore construction for every comonad.
\end{exa}

In the next sections we shall characterize adjunctions and comonads in the 2-category
\IdxPos of doctrines.

\section{The 2-category of doctrines}\label{sect:twocatdoc}

The 2-category \IdxPos of \dfn{doctrines}
consists of the following data:
\begin{description}
\Item [objects] are doctrines, \ie a functor $\fun{\Doc}{\C\op}{\Pos}$ from the opposite
of a category \C to the category \Pos of posets and monotone functions---in the
nomenclature of indexed categories, the category \C is named the \dfn{base} of the
doctrine, for $X$ an object in \C the poset $\Doc(X)$ is the \dfn{fibre over} $X$, and for
\fun{t}{X}{Y} an arrow in \C, the monotone function \fun{\Doc t}{\Doc Y}{\Doc X} is called
\dfn{reindexing along} $t$;\footnote{In the following, we may sometime refer to a doctrine
as a pair \ple{\C,\Doc} in order to make the base \C of the doctrine conspicous.}
\Item [a 1-arrow] 
\oneAr{\ple{F,f}}{\Doc}{\aDoc} from the doctrine \fun{\Doc}{\C\op}{\Pos} to the doctrine
\fun{\aDoc}{\D\op}{\Pos} is a pair where the first component $\fun{F}{\C}{\D}$ is a
functor and the second component $\TNat{f}{\Doc}{\aDoc F\op}$ is a natural transformation;
\Item [a 2-arrow] 
\twoAr{\theta}{\ple{F,f}}{\ple{F',f'}}
is a natural transformation $\TNat{\theta}{F}{F'}$ such that, 
for each object $X$ in \C, 
$f_X\order_X(\DReIdx{\aDoc}{\theta\op})_X\circ f'_X$. 
\Item [Composition of 1-arrows]
\oneAr{\ple{G,g}}{\ple{\B,\bDoc}}{\ple{\C,\Doc}} and
\oneAr{\ple{F,f}}{\ple{\C,\Doc}}{\ple{\D,\aDoc}} is (essentially)
pairwise 
\oneAr{\ple{FG,(fG\op)\cdot g}}{\ple{\B,\bDoc}}{\ple{\D,\aDoc}}.
\Item [Composition of 2-arrows]
\twoAr{\theta}{\ple{F,f}}{\ple{F',f'}} and
\twoAr{\zeta}{\ple{F',f'}}{\ple{F'',f''}} is the natural transformation
$\twoAr{(\zeta_X\circ\theta_X)_{X\in\C_0}}{\ple{F,f}}{\ple{F'',f''}}$
since, for any object $X$ in \C,
\[f_X\order_X\DReIdx{\aDoc}{(\theta\op{}_X)}\circ f'_X
\order_X\DReIdx{\aDoc}{(\theta\op{}_X)}\circ\DReIdx{\aDoc}{(\zeta\op{}_X)}\circ f''_X
\order_X\DReIdx{\aDoc}{((\zeta\circ\theta)\op{}_X)}\circ f''_X.\]
\end{description}

There is an obvious forgetful 2-functor \fun{}{\IdxPos}{\Ct{Cat}} to the 2-category of
categories, functors and natural transformations, which maps a doctrine \ple{\C,\Doc} to
its base category \C, and acts similarly on the arrows.
Note that such a 2-functor is actually a 2-fibration, in the sense of \cite{Hermida99}, 
where cartesian 1-arrows are ``chang of base'', that is, arrows of the form \ple{F,\id{}}, 
while vertical 1-arrows ``fibred'', that is, arrows of the form \ple{\Id,f}, which act
only on the fibres.\footnote{Many notions in this paper can be phrased using the language
of 2-fibrations, but with the hope to keep the presentation more accessible, we shall just
highlight the connection in a few important cases.}

We define also the 2-category \NecIdxPos of doctrines endowed with \acomonadic
operator as follows: 
\begin{description}
\Item [objects] are pairs \ple{\Doc,\necmod} where \Doc is a
doctrine and $\necmod$ is \acomonadic operator on \Doc;
\Item [a 1-arrow] from \ple{\Doc,\necmod} to
\ple{\aDoc,\necmod'} is a 1-arrow
$\oneAr{\ple{F,f}}{\Doc}{\aDoc}$ in \IdxPos such that, for each
object $X$ in the base category of $\Doc$, we have
$f_X \circ\necmod_X\order\necmod'_{FX}\circ f_X$;
\Item [a 2-arrow] from \ple{F,f} to \ple{G,g} is a 2-arrow
\twoAr{\theta}{\ple{F,f}}{\ple{G,g}} in \IdxPos. 
\Item[Compositions] are inherited from those of the 2-category \IdxPos.
\end{description}
It is easy to verify that the requirement on the component $f$ of a 1-arrow in \NecIdxPos is
equivalent to the condition that $\necmod'_{FX}\circ f_X\circ\necmod_X=f_X\circ\necmod_X$,
\ie $f_X$ maps \nec elements to \nec elements. 

\begin{exa}\label{ex:fstex}
Consider the forgetful functor \fun{U}{\Topop}{\Set}, and for a
topological space \ple{X,\tau} let
\fun{u_X=\id{\pw{X}}}{\pw{X}}{\pw{X}}. If \ple{P,\TopInt} is as in
\refToEx{topol}, then
\fun{\ple{U,u}}{\ple{P,\TopInt}}{\ple{\pw{},\Id{\pw{}}}} is a 1-arrow
in \NecIdxPos.
\end{exa}
\begin{exa}\label{ex:sndex}
For a Kripke frame $\KFr=\ple{W,R}$ where $R$ is reflexive and transitive, the pairs
\ple{\pw{W}^{(\blank)},\IntOp_R\circ\blank} and \ple{\Sub[\Fam{W}]{},\necmod}, introduced
in \refToEx{kripke}, are objects of \NecIdxPos.

Consider the functor \fun{C}{\Set}{\Fam{W}} which maps a
set $S$ the pair \ple{S,(S)_{w\in W}} where the second component is
the constant family of value $S$. Also, for $\alpha\in\pw{W}^S$,
consider the $W$-indexed family given by
\[\left(c_S(\alpha)\right)_w\colon=\set{s\in S}{w\in\alpha(s)}.\]
Then
\fun{\ple{C,c}}{\ple{\pw{W}^{(\blank)},\IntOp_R\circ\blank}}
 {\ple{\Sub[\Fam{W}]{},\necmod}} is a 1-arrow in \NecIdxPos.

One can show that the 1-arrow \fun{\ple{C,c}}{\pw{W}^{(\blank)}}{\Sub[\Fam{W}]{}} is the
comprehension completion of the doctrine \fun{\pw{W}^{(\blank)}}{\Set\op}{\Pos}, and that
the \comonadic operator \necmod is the canonical extension of the other operator
$\IntOp_R\circ\blank$, see \cite{MaiettiME:quofcm,StreicherT:semott}.
\end{exa}

\begin{rem}\label{rem:forget}
There is a forgetful 2-functor \fun{}{\NecIdxPos}{\IdxPos} which deletes the
\comonadic operator. It has a right 2-adjoint,
which sends a doctrine \fun{\Doc}{\C\op}{\Pos} to
\ple{\Doc,\id{}} and is the identity both on 1-arrows and
2-arrows.  
Indeed, for any object \ple{\Doc,\necmod} in \NecIdxPos the inequality  
$\necmod_X\order\id{\Doc X}$ holds; so for
any 1-arrow $\oneAr{\ple{F,f}}{\Doc}{\aDoc}$ in \IdxPos we have
$f_X\circ\necmod_X\order f_X$ by monotonicity of $f_X$.
\end{rem}

\section{\Comonadic modalities from adjunctions}\label{sect:adjoint}

The main goal of this section is to connect \comonadic operators as in
\refToDef{modality} and adjunctions in \IdxPos.  First we characterise the general
2-categorical notion of adjunction, as introduced in \refToSect{background}, for the
particular case of the 2-category \IdxPos in terms of the functors and natural
transformations involved. 

\begin{prop}\label{prop:idxpos-adj}
An adjunction in the 2-category \IdxPos in the sense of
\refToDefItem{adjcom}{1} is completely determined by an octuple
\ple{\Doc,\aDoc,\LAdj,\LAdjLift,\RAdj,\RAdjLift,\unit,\counit}, where
\fun{\Doc}{\C\op}{\Pos} and \fun{\aDoc}{\D\op}{\Pos} are
doctrines, \fun{\LAdj}{\C}{\D} and \fun{\RAdj}{\D}{\C} are
functors, \TNat{\LAdjLift}{\Doc}{\aDoc\LAdj\op},
\TNat{\RAdjLift}{\aDoc}{\Doc\RAdj\op},
\TNat{\unit}{\Id{\D}}{\RAdj\LAdj} and
\TNat{\counit}{\LAdj\RAdj}{\Id{\D}}
are natural transformations such that
\begin{enumerate}[{\normalfont(i)}]
\item\label{prop:idxpos-adj:1}
\ple{\C,\D,\LAdj,\RAdj,\unit,\counit} is an adjunction in \Ct{Cat};
\item\label{prop:idxpos-adj:2}
\oneAr{\ple{\LAdj,\LAdjLift}}{\Doc}{\aDoc} and
\oneAr{\ple{\RAdj,\RAdjLift}}{\aDoc}{\Doc} are 1-arrows in \IdxPos;
\item\label{prop:idxpos-adj:3}
\twoAr{\unit}{\ple{\Id{\C},\id{\Doc}}}
{\ple{\RAdj\LAdj,(\RAdjLift\LAdj\op)\LAdjLift}} and
\twoAr{\counit}{\ple{\LAdj\RAdj,(\LAdjLift\RAdj\op)\RAdjLift}}
{\ple{\Id{\D},\id{\aDoc}}} are 2-arrows in \IdxPos.
\end{enumerate}
\end{prop}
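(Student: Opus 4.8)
The plan is to specialise the general notion of adjunction in a $2$-category, \refToDefItem{adjcom}{1}, to the case $\K=\IdxPos$ and to read off each piece of data by means of the explicit description of $0$-, $1$- and $2$-cells given in \refToSect{twocatdoc}; the statement then reduces to matching the two descriptions componentwise. By \refToDefItem{adjcom}{1}, an adjunction in \IdxPos consists of two objects---necessarily two doctrines \fun{\Doc}{\C\op}{\Pos} and \fun{\aDoc}{\D\op}{\Pos}---two $1$-arrows \oneAr{l}{\Doc}{\aDoc} and \oneAr{r}{\aDoc}{\Doc}, and two $2$-arrows \twoAr{\unit}{\idobj{\Doc}}{rl} and \twoAr{\counit}{lr}{\idobj{\aDoc}} making the triangles (\ref{triai}) commute. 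I shall show that giving these is the same as giving the octuple subject to (i), (ii) and (iii), and that the two passages are mutually inverse.

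First I would unfold the two $1$-arrows. By the description of $1$-cells in \IdxPos, the arrow $l$ is precisely a pair $\ple{\LAdj,\LAdjLift}$ with \fun{\LAdj}{\C}{\D} a functor and \TNat{\LAdjLift}{\Doc}{\aDoc\LAdj\op} a natural transformation, and likewise $r=\ple{\RAdj,\RAdjLift}$ with \fun{\RAdj}{\D}{\C} and \TNat{\RAdjLift}{\aDoc}{\Doc\RAdj\op}; this produces the components $\LAdj,\RAdj,\LAdjLift,\RAdjLift$ of the octuple and is exactly condition (ii). Using the composition rule for $1$-arrows recalled in \refToSect{twocatdoc}, the composites that occur are $rl=\ple{\RAdj\LAdj,(\RAdjLift\LAdj\op)\LAdjLift}$ and $lr=\ple{\LAdj\RAdj,(\LAdjLift\RAdj\op)\RAdjLift}$, while the identity $1$-arrows are $\idobj{\Doc}=\ple{\Id{\C},\id{\Doc}}$ and $\idobj{\aDoc}=\ple{\Id{\D},\id{\aDoc}}$.

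Next I would unfold the two $2$-arrows. A $2$-cell \twoAr{\unit}{\idobj{\Doc}}{rl} of \IdxPos is nothing but a natural transformation of the underlying functors, namely \TNat{\unit}{\Id{\C}}{\RAdj\LAdj}, subject to the fibrewise inequality defining $2$-cells in \IdxPos; substituting the components of $rl$ just found, that inequality reads $\id{\Doc X}\order_X\Doc(\unit_X)\circ\RAdjLift_{\LAdj X}\circ\LAdjLift_X$ for every object $X$ of \C. Dually, \counit is a transformation \TNat{\counit}{\LAdj\RAdj}{\Id{\D}} satisfying $\LAdjLift_{\RAdj Y}\circ\RAdjLift_Y\order_Y\aDoc(\counit_Y)$ for every object $Y$ of \D. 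These two facts recover $\unit$ and $\counit$ and are precisely the two clauses of condition (iii).

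It remains to account for the triangle identities. Since a $2$-cell of \IdxPos is exactly its underlying natural transformation endowed with a property, and since both the vertical composition and the whiskering of $2$-cells in \IdxPos are computed on these underlying transformations, the two equalities (\ref{triai}) hold in \IdxPos if and only if the corresponding triangle identities hold for \unit and \counit regarded as transformations relating \LAdj and \RAdj; that is, if and only if \ple{\C,\D,\LAdj,\RAdj,\unit,\counit} is an adjunction in \Ct{Cat}, which is condition (i). Running this chain of identifications backwards turns any octuple satisfying (i)--(iii) into an adjunction in \IdxPos, and the two passages are visibly inverse to each other, so the adjunction is completely determined by the octuple. I expect the one point needing genuine care to be the bookkeeping of the contravariant $(\blank)\op$ in the $1$-arrow components: one must verify that the reindexing appearing in the $\IdxPos$ $2$-cell condition for \unit has $X$-component $\Doc(\unit_X)$ and composes correctly with the component $\RAdjLift_{\LAdj X}\circ\LAdjLift_X$ of $rl$, and symmetrically for \counit; once the indices are aligned, every remaining verification is routine.
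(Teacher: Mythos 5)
Your proposal is correct and takes essentially the same approach as the paper: the paper's proof simply observes that applying the forgetful 2-functor \fun{}{\IdxPos}{\Ct{Cat}} yields condition (i), with $\LAdj$ and $\RAdj$ the first components of the two 1-arrows, and declares the remainder ``plain bookkeeping''. Your argument is exactly that bookkeeping carried out explicitly---unfolding the 1-arrows, the composites, the 2-cell inequalities, and noting that the triangle identities are computed on underlying natural transformations---so it matches the paper's proof in substance, only at a greater level of detail.
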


\begin{proof}
If \ple{P,Q,l,r,\unit,\counit} is an adjunction in
\IdxPos, applying the forgetful functor \fun{}{\IdxPos}{\Ct{Cat}} one
gets immediately \ref{prop:idxpos-adj:1} where $L$ and $R$ are the
first components of $l$ and $r$ respectively. The rest of the proof is
plain bookkeeping.
\end{proof}

As for any 2-category, one can consider the 2-category \Adj{\IdxPos} of adjunctions
in \IdxPos. The following proposition is just as straightforward as the
previous one.

\begin{prop}\label{prop:catadj}
The 2-category \Adj{\IdxPos} of adjunctions in \IdxPos
has objects which are adjunctions
$\adj=\ple{\Doc^\adj,\aDoc^\adj,\LAdj^\adj,\LAdjLift^\adj,
\RAdj^\adj,\RAdjLift^\adj,\unit^\adj,\counit^\adj}$
as in \refToProp{idxpos-adj}, where \fun{\Doc^\adj}{(\C^\adj)\op}{\Pos} and
\fun{\aDoc^\adj}{(\D^\adj)\op}{\Pos}.

A 1-arrow
\oneAr{\ple{F,f,G,g,\theta}}{\adj}{\aadj} 
in \Adj{\IdxPos} consists of two 1-arrows 
\oneAr{\ple{F,f}}{\Doc^\adj}{\Doc^\aadj} and
\oneAr{\ple{G,g}}{\aDoc^\adj}{\aDoc^\aadj},
and a 2-arrow 
\twoAr{\theta}{\ple{F\RAdj^\adj,(f(\RAdj^\adj)\op)\RAdjLift^\adj}}
{\ple{\RAdj^\aadj G,(\RAdjLift^\aadj G\op)g}} in \IdxPos such that the triple
\ple{F,G,\theta} is a homomorphism of adjunctions in \Ct{Cat}, and the two natural
transformations \TNat{(g(\LAdj^\adj)\op)\lambda^\adj}{\Doc^\adj}{\aDoc^\aadj (G\LAdj^\adj)\op} and
\TNat{(\lambda^\aadj F\op)f}{\Doc^\adj}{\aDoc^\aadj(\LAdj^\aadj F)\op} coincide (note that
$G\LAdj^\adj=\LAdj^\aadj F$ by the first condition).
 
A 2-arrow
\twoAr{\ple{\alpha,\beta}}{\ple{F, f, G, g,\theta}}{\ple{F',f',G',g',\theta'}}
in \Adj{\IdxPos} consists of two 2-arrows \twoAr{\alpha}{\ple{F,f}}{\ple{F',f'}} and
\twoAr{\beta}{\ple{G,g}}{\ple{G',g'}} in \IdxPos such that
\ple{\alpha,\beta} is a 2-cell from the adjunction homomorphism \ple{F, G, \theta} to the
adjunction homomorphism \ple{F',G',\theta'} in \Ct{Cat}. 
\end{prop}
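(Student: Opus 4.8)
The plan is to mirror the proof of \refToProp{idxpos-adj}: instantiate the general construction of the 2-category \Adj{\K} of adjunctions in a 2-category \K, as set up in \cite{PowerW02}, to the case $\K=\IdxPos$, and then translate the abstract data into the explicit tuples by applying the forgetful 2-functor \fun{}{\IdxPos}{\Ct{Cat}} and reading off the fibrewise components. The description of objects is immediate from \refToProp{idxpos-adj}, so all the work lies in unwinding the 1-arrows and the 2-arrows, which is once again plain bookkeeping.

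For the 1-arrows, a morphism of adjunctions in \K consists abstractly of a pair of 1-cells relating the domains and the codomains of the two adjunctions, together with a structural 2-cell compatible with the units and counits. Instantiating in \IdxPos, and recalling that a 1-cell there is a functor paired with a natural transformation on the fibres, the pair of 1-cells becomes exactly the two 1-arrows \oneAr{\ple{F,f}}{\Doc^\adj}{\Doc^\aadj} and \oneAr{\ple{G,g}}{\aDoc^\adj}{\aDoc^\aadj}, while the structural 2-cell becomes $\theta$. Applying the forgetful 2-functor to \Ct{Cat} sends this datum to the underlying functors $F,G$ and the natural transformation $\theta$, and the requirement that \ple{F,G,\theta} be a homomorphism of adjunctions in \Ct{Cat} records the base-level compatibility; in particular it forces the strict equality $G\LAdj^\adj=\LAdj^\aadj F$. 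It then remains to identify the surviving fibrewise condition, and the claim is that it is exactly the stated coincidence of \TNat{(g(\LAdj^\adj)\op)\lambda^\adj}{\Doc^\adj}{\aDoc^\aadj (G\LAdj^\adj)\op} with \TNat{(\lambda^\aadj F\op)f}{\Doc^\adj}{\aDoc^\aadj(\LAdj^\aadj F)\op}. Read inside \IdxPos, this equality says precisely that the composites $\ple{G,g}\circ\ple{\LAdj^\adj,\LAdjLift^\adj}$ and $\ple{\LAdj^\aadj,\LAdjLift^\aadj}\circ\ple{F,f}$ agree as 1-arrows, \ie that the square of left adjoints commutes not merely in the base but in \IdxPos.

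For the 2-arrows the same pattern repeats: the general definition supplies a pair of 2-cells between the two pairs of legs, compatible with the structural 2-cells, and in \IdxPos these are just natural transformations \twoAr{\alpha}{\ple{F,f}}{\ple{F',f'}} and \twoAr{\beta}{\ple{G,g}}{\ple{G',g'}} subject to the fibrewise inequalities already built into being 2-arrows of \IdxPos; under the forgetful 2-functor the pair \ple{\alpha,\beta} becomes a 2-cell of adjunction homomorphisms in \Ct{Cat}, and no further fibrewise condition survives. Compositions are inherited from \IdxPos and from \Adj{\Ct{Cat}}, so nothing need be checked there. The one genuinely delicate point---the bookkeeping already anticipated for \refToProp{idxpos-adj}---is the identification made in the previous paragraph: one must keep track of the $(\blank)\op$-reindexings and of the two whiskering directions $(g(\LAdj^\adj)\op)\lambda^\adj$ versus $(\lambda^\aadj F\op)f$, and check that, once the base equality $G\LAdj^\adj=\LAdj^\aadj F$ is in force, the single abstract compatibility of the structural 2-cell $\theta$ with the adjunction data unpacks into exactly this equality of lifts rather than a mere inequality. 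Here it is the mate correspondence between $\theta$ and the left-adjoint square that guarantees that the two presentations of the morphism---through $\theta$ on the right adjoints and through the coinciding composites on the left adjoints---carry the same information.
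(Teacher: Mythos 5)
Your overall route is the one the paper intends: the paper gives no real proof of this proposition (it just declares it ``as straightforward as the previous one''), the implicit argument being exactly what you describe --- instantiate the definition of \Adj{\K} from \cite{PowerW02} at $\K=\IdxPos$ and unwind it through the forgetful 2-functor to \Ct{Cat}. Your treatment of objects and of 2-arrows is fine, and your second paragraph correctly identifies the coincidence condition as saying that the square of left adjoints commutes as 1-arrows of \IdxPos, not merely in the bases.

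The gap is in your last paragraph, where you locate the \emph{source} of that coincidence. You render an abstract 1-cell of \Adj{\K} as ``a pair of 1-cells together with a structural 2-cell compatible with the units and counits'', omitting the clause that the left-adjoint square must commute \emph{strictly, as an equation of 1-cells in \K}; you then claim that, once $G\LAdj^\adj=\LAdj^\aadj F$ holds in the base, the compatibility of $\theta$ ``unpacks into exactly this equality of lifts rather than a mere inequality'' via the mate correspondence. That derivation fails, and it fails precisely because of the poset-fibred nature of \IdxPos. The mate of $\theta$ is a 2-arrow of \IdxPos from $\ple{\LAdj^\aadj,\LAdjLift^\aadj}\circ\ple{F,f}$ to $\ple{G,g}\circ\ple{\LAdj^\adj,\LAdjLift^\adj}$ whose underlying natural transformation is the identity (that is what the homomorphism condition in \Ct{Cat} yields), and such a 2-arrow encodes only the one-sided inequality $(\LAdjLift^\aadj F\op)f\order(g(\LAdj^\adj)\op)\LAdjLift^\adj$ on the fibres: a 2-cell with identity underlying transformation is \emph{not} an identity 2-cell unless its source and target 1-arrows already coincide, which is the very thing you are trying to prove. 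So from $\theta$ one gets half of the coincidence and never the other half; the equality is \emph{imposed}, being the fibre component of the strict equation $\ple{G,g}\circ\ple{\LAdj^\adj,\LAdjLift^\adj}=\ple{\LAdj^\aadj,\LAdjLift^\aadj}\circ\ple{F,f}$ that the definition of \Adj{\K} requires of its 1-cells --- a clause needed anyway to type-check the unit/counit compatibilities as 2-cell equations in \K and to make \CmdFun definable (the whiskering $\ple{\LAdj^\aadj,\LAdjLift^\aadj}\theta$ must have source literally equal to $\ple{G,g}$ composed with the comonad of $\adj$). The mate correspondence does give the \emph{converse}: in the presence of the strict left square in \IdxPos, $\theta$ is determined and is automatically a 2-arrow of \IdxPos, so that datum is redundant; but the two presentations do not carry the same information, and with your weaker definition \Adj{\IdxPos} would have strictly more 1-arrows than the proposition asserts, making the statement false for it.
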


\begin{rem}
To elucidate the conditions in \refToProp{catadj} in terms of some diagrams, consider first
that the forgetful 2-functor \fun{}{\IdxPos}{\Ct{Cat}} extends to a 2-functor
\fun{}{\Adj{\IdxPos}}{\Adj{\Ct{Cat}}}. Hence
the condition that the triple \ple{F,G,\theta} is a homomorphism of adjunctions in \Ct{Cat}
requires that the diagram of functors 
\[\xymatrix{\C^\adj\ar[r]^-F\ar[d]_-{\LAdj^\adj}&\C^\aadj\ar[d]^-{\LAdj^\aadj}\\
\D^\adj\ar[r]^-G&\D^\aadj}\]
commutes as well as (either of) the diagrams of natural transformations
\[\xymatrix{F\arnta[d]_-{\unit^\aadj F}\arnta[r]^-{F\unit^\adj}&
F\RAdj^\adj\LAdj^\adj\arntb[d]^-{\theta\LAdj^\adj}\\
\RAdj^\aadj\LAdj^\aadj F\ar@{2-2-}[r]&\RAdj^\aadj G\LAdj^\adj}\kern6em
\xymatrix{\LAdj^\aadj F\RAdj^\adj\ar@{2-2-}[d]\arnta[r]^-{\LAdj^\aadj\theta}&
\LAdj^\aadj\RAdj^\aadj G\arntb[d]^-{\counit^\aadj G}\\
G\LAdj^\adj\RAdj^\adj\arnta[r]_-{G\counit^\adj}&G}\]
as the two commutativity conditions are equivalent. For instance, if we assume the first
commutes, postcomposing it with $\LAdj^\aadj$ and precomposing it with $\RAdj^\adj$,
and using the naturality of $\theta$ and $\counit^\aadj$ and the triangular identities
of adjunctions, we get the second as depicted in the following diagram
\[\xymatrix@R=2.6em@C=4.5em{
\LAdj^\aadj F\RAdj^\adj\arntb[d]^-{\LAdj^\aadj\unit^\aadj F\RAdj^\adj}
\arnta[r]^-{\LAdj^\aadj F\unit^\adj\RAdj^\adj}
\ar@<-10pt>@/_25pt/[dd]|-*=0[@]{\lstb\tdot}_(.4){\id{}}
\ar@<5pt>@/^25pt/[rr]|-*=0[@]{\lsta\tdot}^(.4){\id{}}&
\LAdj^\aadj F\RAdj^\adj\LAdj^\adj\RAdj^\adj
\arntb[d]^-{\LAdj^\aadj\theta\LAdj^\adj\RAdj^\adj}
\arnta[r]^-{\LAdj^\aadj F\RAdj^\adj\counit^\adj}&
\LAdj^\aadj F\RAdj^\adj\arntb[d]^-{\LAdj^\aadj\theta}\\
\LAdj^\aadj\RAdj^\aadj\LAdj^\aadj F\RAdj^\adj\ar@{2-2-}[r]
\arntb[d]^-{\counit^\aadj\LAdj^\aadj F\RAdj^\adj}&
\LAdj^\aadj\RAdj^\aadj G\LAdj^\adj\RAdj^\adj
\arnta[r]^-{\LAdj^\aadj\RAdj^\aadj G\counit^\adj}&
\LAdj^\aadj\RAdj^\aadj G\arntb[d]^-{\counit^\aadj G}\\
\LAdj^\aadj F\RAdj^\adj\ar@{2-2-}[r]&
G\LAdj^\adj\RAdj^\adj\arnta[r]^-{G\counit^\adj}&
\LAdj^\aadj\RAdj^\aadj G}\]

The condition that the pair \ple{\alpha,\beta} is a 2-cell from the adjunction
homomorphism \ple{F,G,\theta} to the adjunction homomorphism \ple{F',G',\theta'} in
\Ct{Cat} translates into commutativity of the following diagrams of natural
transformations
\[\xymatrix{
\LAdj^\aadj F\arnta[r]^-{\LAdj^\aadj\alpha}\ar@{2-2-}[d]&\LAdj^\aadj F'\ar@{2-2-}[d]\\
G\LAdj^\adj\arnta[r]^-{\beta\LAdj^\adj}&G'\LAdj^\adj.} 
\kern6em
\xymatrix{
F\RAdj^\adj\arnta[r]^-{\alpha\RAdj^\adj}\arnta[d]_{\theta}&F'\RAdj^\adj\arntb[d]^{\theta'}\\
\RAdj^\aadj G\arnta[r]^-{\RAdj^\aadj\beta}&\RAdj^\aadj G'}\]
\end{rem}

From now on, when referring to an adjunction in the 2-category
\IdxPos, we shall take advantage of \refToProp{idxpos-adj} and write
it as an octuple
\ple{\Doc,\aDoc,\LAdj,\LAdjLift,\RAdj,\RAdjLift,\unit,\counit}.

\begin{exa}\label{ex:pb-cat}
Examples are many as any adjunction between categories with pullbacks gives rise to an
adjunction between the doctrines of subobjects. In details, given a category with
pullbacks \C, one can define a functor $\fun{\Sub[\C]{}}{\C\op}{\Pos}$
taking advantage of the fact that pulling back preserves monos. The
functor maps an object to the poset of its subobjects and reindexing
along \fun{f}{X'}{X} is as follows:
a subobject $\xymatrix@1{[A\ \ar@<-.2ex>@{^(->}[r]^-{\alpha}&X]}$, determined
by the isomorphism class of the mono $\alpha$, is taken to the subobject
determined by the mono $\alpha'$ obtained as a pullback
\[\xymatrix{A'\ \ar[d]\ar@{^{(}->}[r]^-{\alpha'}
&X'\ar[d]^{f}\\
A\ \ar@{^{(}->}[r]_-{\alpha}&X.}\]
Let \D be also a category with pullbacks, and consider an
adjunction \ple{\C,\D,\LAdj,\RAdj,\unit,\counit} where
\fun{\LAdj}{\C}{\D} preserves pullbacks (as a right adjoint, the
functor \fun{R}{\D}{\C} preserves all existing limits).
Between the doctrines $\fun{\Sub[\C]{}}{\C\op}{\Pos}$ and
$\fun{\Sub[\D]{}}{\D\op}{\Pos}$ 
there are 1-arrows of \IdxPos 
$\oneAr{\ple{\LAdj,\LAdjLift}}{\Sub[\C]{}}{\Sub[\D]{}}$
and
$\oneAr{\ple{\RAdj,\RAdjLift}}{\Sub[\D]{}}{\Sub[\C]{}}$,
where for $X$ in \C and $Y$ in \D
\[\LAdjLift_X(\xymatrix@1{[A\ \ar@{^(->}[r]^-{\alpha}&X]})= 
\xymatrix@1{[\LAdj A\ \ar@{^(->}[r]^-{\LAdj\alpha}& \LAdj X]}
  \qquad
\RAdjLift_X(\xymatrix@1{[B\ \ar@{^(->}[r]^-{\beta}&Y]})= 
\xymatrix@1{[\RAdj A\ \ar@{^(->}[r]^-{\RAdj\beta}& \RAdj Y].}\]
The naturality of $\LAdjLift$ and $\RAdjLift$ follows since reindexing
is given by pulling back, and $\LAdj$ and $\RAdj$ preserve pullbacks.  
To see that
\ple{\Sub[\C]{},\Sub[\D]{},\LAdj,\LAdjLift,\RAdj,\RAdjLift,\unit,\counit}
is an adjunction in \IdxPos there remains to check that 
\twoAr{\unit}{\ple{\Id{\C},\id{\Sub[\C]{}}}}
{\ple{\RAdj\LAdj,(\RAdjLift\LAdj\op)\LAdjLift}} and
\twoAr{\counit}{\ple{\LAdj\RAdj,(\LAdjLift\RAdj\op)\RAdjLift}}
{\ple{\Id{\D},\id{\Sub[\D]{}}}}
are 2-arrows of \IdxPos: in other words,
for any $\xymatrix@1{[A\ \ar@<-.2ex>@{^(->}[r]^-{\alpha}&X]}$ and
$\xymatrix@1{[B\ \ar@<-.2ex>@{^(->}[r]^-{\beta}&Y]}$, we have 
\[[\alpha]\order\Sub[\C]{\unit_X}[\RAdj\LAdj(\alpha)]
\quad\mbox{and}\quad
[\LAdj\RAdj(\beta)]\order\Sub[\D]{\counit_Y}[\beta].\]
But this follows from naturality of \unit and \counit together with the
reindexing pullbacks
\[\xymatrix@C=3em{A\ \ar@{^{(}->}@/^5pt/[drr]^{\alpha}
\ar@/_5pt/[ddr]_{\unit_A}\ar@{-->}[dr]\\
&P\ \ar[d]\ar@{^{(}->}[r]\ar@{}[dr]|{\text{p.b.}}&X\ar[d]^{\unit_X}\\
&\RAdj\LAdj A\ \ar@{^{(}->}[r]_{\RAdj\LAdj(\alpha)}&\RAdj\LAdj X}
\qquad\xymatrix@C=3em{\LAdj\RAdj B\ \ar@{-->}[dr]
\ar@{^{(}->}@/^5pt/[drr]^{\LAdj\RAdj(\beta)}\ar@/_5pt/[ddr]_{\counit_B}&& \\
&P\ \ar[d]\ar@{^{(}->}[r]\ar@{}[dr]|{\text{p.b.}}&
\LAdj\RAdj Y\ar[d]^{\counit_Y}\\ 
&B\ \ar@{^{(}->}[r]_{\beta}&Y}\]
\end{exa}

We now put to use the characterisation in \refToProp{idxpos-adj} to construct \acomonadic 
operator starting from an adjunction of doctrines.
We begin the process performing the construction for a very specific type of adjunctions: adjunctions between vertical 1-arrows.

\begin{prop}\label{prop:adj-to-modality}
Let $\fun{\Doc}{\C\op}{\Pos}$ and $\fun{\aDoc}{\C\op}{\Pos}$ be
doctrines, and suppose the octuple
\ple{\Doc,\aDoc,\Id{\C},\LAdjLift,
\Id{\C},\RAdjLift,\id{\Id{\C}},\id{\Id{\C}}}
is an adjunction in \IdxPos. Then
\begin{enumerate}[{\normalfont(i)}]
\item for each object $X$ in \C, the
following adjunction holds between the fibres
\[\xymatrix{\Doc X \ar@/^8pt/[rr]^{\LAdjLift_X}  \ar@{}[rr]|{\bot} && 
\ar@/^8pt/[ll]^{\RAdjLift_X} \aDoc X,}\quad X\in\C_0;\]
\item $\necmod = \LAdjLift \cdot \RAdjLift$ is \acomonadic
operator on $\aDoc$. 
\end{enumerate}
\end{prop}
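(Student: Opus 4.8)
The plan is to read off, from the hypothesis that the octuple is an adjunction in \IdxPos, two fibrewise inequalities and then to derive both items from these alone. Since here $\LAdj=\RAdj=\Id{\C}$ and the base components of $\unit$ and $\counit$ are both $\id{\Id{\C}}$, the composite 1-arrows appearing in \refToProp{idxpos-adj} collapse to $rl=\ple{\Id{\C},\RAdjLift\cdot\LAdjLift}$ and $lr=\ple{\Id{\C},\LAdjLift\cdot\RAdjLift}$, and the reindexing factor $\Doc\theta\op$ (respectively $\aDoc\theta\op$) occurring in the definition of a 2-arrow of \IdxPos becomes the identity. Hence \refToPropItem{idxpos-adj}{3} --- that $\unit$ and $\counit$ are 2-arrows of \IdxPos --- says precisely that, for every object $X$ in \C,
\[\id{\Doc X}\order_X\RAdjLift_X\circ\LAdjLift_X \qquad\text{and}\qquad \LAdjLift_X\circ\RAdjLift_X\order_X\id{\aDoc X}.\]

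First, for item~(i): these are exactly the unit and counit inequalities witnessing an adjunction $\LAdjLift_X\dashv\RAdjLift_X$ between the posets $\Doc X$ and $\aDoc X$. As the fibres are posets, the triangular identities hold automatically, so nothing further is required.

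Next, for item~(ii): the vertical composite $\necmod=\LAdjLift\cdot\RAdjLift$ is a natural transformation $\aDoc\to\aDoc$ with component $\necmod_X=\LAdjLift_X\circ\RAdjLift_X$. Condition~(\ref{def:modality:1}) of \refToDef{modality} is then literally the second inequality above. For condition~(\ref{def:modality:2}), I apply $\LAdjLift_X$ on the left and $\RAdjLift_X$ on the right to the first inequality $\id{\Doc X}\order_X\RAdjLift_X\circ\LAdjLift_X$; by monotonicity of $\LAdjLift_X$ and $\RAdjLift_X$ this gives $\LAdjLift_X\circ\RAdjLift_X\order_X\LAdjLift_X\circ\RAdjLift_X\circ\LAdjLift_X\circ\RAdjLift_X$, that is $\necmod_X\order_X\necmod_X\circ\necmod_X$.

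The argument is in essence a translation exercise, and the only delicate point is the correct reading of the 2-arrow inequalities of \IdxPos for $\unit$ and $\counit$: one must keep the two directions straight and observe that all reindexing twists degenerate to identities because the underlying base data are identities. The remaining steps are pure monotonicity together with the automatic triangular identities of the poset-enriched setting.
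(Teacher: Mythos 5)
Your proof is correct and takes essentially the same route as the paper's: both reduce the hypothesis, via \refToProp{idxpos-adj}, to the two fibrewise inequalities $\id{\Doc X}\order\RAdjLift_X\circ\LAdjLift_X$ and $\LAdjLift_X\circ\RAdjLift_X\order\id{\aDoc X}$ coming from the unit and counit being 2-arrows in \IdxPos, and read off both items from these. The paper simply asserts that the conclusion ``follows directly,'' while you make that step explicit (the Galois-connection reading of (i), and the whiskering of the unit inequality for condition (\ref{def:modality:2}) in (ii)), which is the same argument spelled out.
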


\begin{proof}
By \refToProp{idxpos-adj}, the hypothesis ensures that
\twoAr{\id{\Id{\C}}}{\ple{\Id{\C},\id{}}}{\ple{\Id{\C},{\RAdjLift\LAdjLift}}}
and
\twoAr{\id{\Id{\C}}}{\ple{\Id{\C},\LAdjLift\RAdjLift}}{\ple{\Id{\C},\id{}}}
are 2-arrows in \IdxPos. From this, the conclusion follows directly.
\end{proof}

\begin{exa}
Recall from \cite{RosenthalK:quaata} that a \emph{commutative quantale} is a complete
lattice endowed with further structure 
\ple{\Qtl,\Lsup,\order,\Ltensor,\Lunit} where \ple{\Qtl,\Lsup,\order} is a complete
lattice, \ple{\Qtl,\Ltensor,\Lunit} is a commutative monoid such that the operation
$\Ltensor$ distributes over sups:
\[x\Ltensor\left(\Lsup\kern-.8ex\strut_{i\in I}\,x_i\right)=
\Lsup\kern-.8ex\strut_{i\in I}(x\Ltensor x_i)\]
for elements $x$ and families $(x_i)_{i\in I}$ in $\Qtl$---note that this yields that
$\Ltensor$ is monotone in its two arguments.

Let
$R_\Qtl = \{x\in\Qtl\mid x\order \Lunit\mbox{ and } x\order x\Ltensor x\} \subseteq \Qtl$. 
It is easy to check that $\Lunit\in R_\Qtl$ and $R_\Qtl$ is closed with respect to
$\Ltensor$ and $\Lsup$. Hence \ple{R_\Qtl,\Lsup,\order,\Ltensor,\Lunit} is a commutative
quantale. Let $\fun{\iota}{R_\Qtl}{\Qtl}$ be the inclusion function which clearly
preserves sups. Its right adjoint $\fun{r}{\Qtl}{R_\Qtl}$ is determined as
$r(x) = \Lsup \{ y \in R_\Qtl \mid  y\order x\}$.

Consider the doctrine $\fun{\Qtl^{(\blank)}}{\Set\op}{\Pos}$ and
$\fun{R_\Qtl^{(\blank)}}{\Set\op}{\Pos}$ mapping a set $X$ to the sets of functions
$\Qtl^X$ and $R_\Qtl^X$, ordered pointwise, and acting on functions by precomposition. 
And the 1-arrow
$\oneAr{\ple{\Id{},\iota\circ\blank}}{R_\Qtl^{(\blank)}}{\Qtl^{(\blank)}}$ has a right
adjoint given by
$\oneAr{\ple{\Id{},r\circ\blank}}{\Qtl^{(\blank)}}{R_\Qtl^{(\blank)}}$.
Hence, by \refToProp{adj-to-modality}, there is \acomonadic
operator \TNat{\Lbang}{\Qtl^{(\blank)}}{\Qtl^{(\blank)}} given by
$\Lbang_X \alpha = \iota\circ r\circ \alpha$, 
for any set $X$ and $\alpha \in \Qtl^X$. 

Recall that the doctrine $\Qtl^{(\blank)}$ carries a much richer structure induced
from that of the original quantale $\Qtl$: 
for any set $X$, \ple{\Qtl^X,\Lsup, \order_X,\Ltensor_X,\Lunit_X} is a commutative quantale with
the pointwise structure and, 
for $\alpha,\beta\in \Qtl^X$, the operation $\alpha\Llimpl_X\beta\colon= \Lsup
\{\zeta\in\Qtl^X\mid \alpha\Ltensor_X\zeta\order_X \beta\}$ determines an adjoint pair
$\blank\Ltensor_X\alpha\dashv\alpha\Llimpl_X\blank$; \ie for every $\gamma\in\Qtl^X$, one
has that
$\alpha\Ltensor_X\gamma\order_X\beta$ if and only if $\gamma\order_X \alpha\Llimpl_X\beta$. 
Furthermore, the \comonadic operator \TNat{\Lbang}{\Qtl^{(\blank)}}{\Qtl^{(\blank)}}
enjoys additional properties: 
for any set $X$ and $\alpha,\beta\in\Qtl^X$, we have 
$\Lbang_X \alpha \order_X \Lunit_X$ and
$\Lbang_X \alpha\order_X \Lbang_X\alpha \Ltensor_X\Lbang_X\alpha$, and  
$\Lunit_X \order_X \Lbang_X \Lunit_X$ and 
$\Lbang_X\alpha \Ltensor_X \Lbang_X \beta \order_X \Lbang_X (\alpha \Ltensor_X \beta)$. 
Therefore, the indexed poset $\Qtl^{(\blank)}$ provides a model of first order
intuitionistic linear logic, where $\Lbang$ is the linear exponential modality.
\end{exa}

\begin{exas}
Let \fun{\Doc}{\C\op}{\Pos} be a doctrine.
The propositional connectives are defined in terms of adjunctions
involving \Doc and another doctrine defined from it where the
adjoint functors between the base categories are the identity, see
\cite{LawvereF:adjif}, see also
\cite{JacobsB:catltt,MaiettiME:exacf}.
So \refToProp{adj-to-modality} provides \comonadic
operators associated with each connectives.
Two interesting instances are the following:
\begin{enumerate}
\item Consider the doctrine \fun{\Doc^2}{\C\op}{\Pos}, defined
by $\Doc^2X = \Doc X \times \Doc X$ and $\DReIdx{\Doc^2}{f} =
\DReIdx{\Doc}{f} \times \DReIdx{\Doc}{f}$. Note that there is a
1-arrow
\oneAr{\ple{\Id{\C},\Delta}}{\Doc}{\Doc^2} where
$\Delta_X=\ple{\id{\Doc X},\id{\Doc X}}$.
Conjunction on \Doc is determined by
a right adjoint to \ple{\Id{\C},\Delta} in \IdxPos,
that is the octuple
\ple{\Id{\C},\Delta,\Id{\C},\Land,\id{\Id{C}},\id{\Id{C}}} is 
an adjunction between \Doc and $\Doc^2$.  
Hence, by \refToProp{adj-to-modality}, there is \acomonadic
operator on $\Doc^2$ given by
$\ple{\alpha,\beta}\mapsto\ple{\alpha\Land\beta,\alpha\Land\beta}$,
for $\alpha,\beta\in\Doc X$.  
\item Assume further that \C has finite products and consider an
object $X$ in \C.  
Consider the doctrine $\fun{\Doc^X}{\C\op}{\Pos}$, determined as
$\Doc^X(Y)=\Doc(Y\times X)$ and
$\DReIdx{\Doc^X}{(f)}=\DReIdx{\Doc}{(f\times\id{X})}$.
There is a 1-arrow $\oneAr{\ple{\Id{\C},p^X}}{\Doc}{\Doc^X}$ where
$p^X_Y=\DReIdx{\Doc}{\pi_1}$ and \fun{\pi_1}{Y\times X}{Y} is the
first projection.  
A universal quantifier  $\Forall X.$ on $\Doc$ over $X$ is a
right adjoint to $\ple{\Id{\C},p^X}$ in \IdxPos, \ie the octuple
\ple{\Doc,\Doc^X,\Id{\C}, p^X,\Id{\C},
\Forall X.,\id{\Id{\C}},\id{\Id{\C}}}
is an adjunction in \IdxPos.
Hence, by \refToProp{adj-to-modality}, there is \acomonadic
operator on $\Doc^X$ given as $\alpha\mapsto p^X(\Forall X.\alpha)$
for $\alpha\in\Doc^X(Y)=\Doc(Y\times X)$.   
\end{enumerate}
\end{exas}

We did not consider the other cases of connectives because the 
modality each of those induces is the identity as the next proposition
explains in a more general context.

\begin{prop}\label{prop:trivial-modality}
Let $\fun{\Doc}{\C\op}{\Pos}$ and $\fun{\aDoc}{\C\op}{\Pos}$ be
doctrines on the same base category. Suppose
\ple{\Doc,\aDoc,\Id{\C},\LAdjLift,\Id{\C},\RAdjLift,
\id{\Id{\C}},\id{\Id{\C}}} is an adjunction. 
Then, for each object $X$ in \C, the following hold:
\begin{enumerate}[{\normalfont(i)}]
\item\label{prop:trivial-modality:1} $\LAdjLift_X \cdot \RAdjLift_X
  \cdot \LAdjLift_X = \LAdjLift_X$ and $\RAdjLift_X \cdot \LAdjLift_X
  \cdot \RAdjLift_X = \RAdjLift_X$;
\item\label{prop:trivial-modality:2} $\LAdjLift_X \cdot \RAdjLift_X =
  \id{\aDoc X}$ if and only if $\RAdjLift_X$ is injective if and only
  if $\LAdjLift_X$ is surjective; 
\item\label{prop:trivial-modality:3} $\RAdjLift_X \cdot \LAdjLift_X =
  \id{\Doc X}$  if and only if $\LAdjLift_X$ is injective if and only
  if $\RAdjLift_X$ is surjective. 
\end{enumerate}
\end{prop}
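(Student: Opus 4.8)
The plan is to reduce everything to the fibrewise adjunction supplied by \refToProp{adj-to-modality}. By part~(i) of that proposition, for each object $X$ in \C the monotone maps $\fun{\LAdjLift_X}{\Doc X}{\aDoc X}$ and $\fun{\RAdjLift_X}{\aDoc X}{\Doc X}$ form an adjunction $\LAdjLift_X\dashv\RAdjLift_X$ of posets, \ie a Galois connection. Unwinding this, the unit and the counit are exactly the two inequalities
\[\id{\Doc X}\order_X\RAdjLift_X\cdot\LAdjLift_X\qquad\text{and}\qquad\LAdjLift_X\cdot\RAdjLift_X\order_X\id{\aDoc X},\]
equivalently $\LAdjLift_X a\order_X b$ if and only if $a\order_X\RAdjLift_X b$, for $a\in\Doc X$ and $b\in\aDoc X$. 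All three claims are then elementary consequences of these two inequalities together with monotonicity and antisymmetry of $\order_X$.

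For part~(\ref{prop:trivial-modality:1}) I would whisker the unit and counit. Applying the monotone map $\LAdjLift_X$ to $\id{\Doc X}\order_X\RAdjLift_X\cdot\LAdjLift_X$ yields $\LAdjLift_X\order_X\LAdjLift_X\cdot\RAdjLift_X\cdot\LAdjLift_X$, while precomposing the counit with $\LAdjLift_X$ gives the reverse inequality $\LAdjLift_X\cdot\RAdjLift_X\cdot\LAdjLift_X\order_X\LAdjLift_X$; antisymmetry then forces $\LAdjLift_X\cdot\RAdjLift_X\cdot\LAdjLift_X=\LAdjLift_X$. The identity $\RAdjLift_X\cdot\LAdjLift_X\cdot\RAdjLift_X=\RAdjLift_X$ is proved symmetrically, applying $\RAdjLift_X$ to the counit and whiskering the unit. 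These are just the usual triangle identities of an adjunction, which collapse to equalities here because the only $2$-cells between parallel monotone maps are inequalities.

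For parts~(\ref{prop:trivial-modality:2}) and~(\ref{prop:trivial-modality:3}) I would argue each equivalence directly, using the zig-zag identities from part~(\ref{prop:trivial-modality:1}) to close the loops. For~(\ref{prop:trivial-modality:2}): if $\LAdjLift_X\cdot\RAdjLift_X=\id{\aDoc X}$ then $\RAdjLift_X$ is injective, since $\LAdjLift_X$ is a retraction of it, and $\LAdjLift_X$ is surjective, since every $b\in\aDoc X$ equals $\LAdjLift_X(\RAdjLift_X b)$; conversely, injectivity of $\RAdjLift_X$ cancels in the identity $\RAdjLift_X\cdot\LAdjLift_X\cdot\RAdjLift_X=\RAdjLift_X$ to give $\LAdjLift_X\cdot\RAdjLift_X=\id{\aDoc X}$, and surjectivity of $\LAdjLift_X$ combined with $\LAdjLift_X\cdot\RAdjLift_X\cdot\LAdjLift_X=\LAdjLift_X$ yields the same equality. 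Part~(\ref{prop:trivial-modality:3}) is the exact dual, exchanging the roles of $\LAdjLift_X$ and $\RAdjLift_X$ and of the two identities from part~(\ref{prop:trivial-modality:1}).

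I do not expect any genuine obstacle here: the entire content is the observation that the hypothesised adjunction of vertical $1$-arrows descends to each fibre as a Galois connection, after which the statements are the standard facts that the composites of the two adjoints are idempotent and that a (co)unit of a poset adjunction is an identity precisely when one adjoint is injective and the other surjective. The only point demanding care is keeping track of the directions of the two defining inequalities when whiskering.
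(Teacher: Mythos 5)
Your proposal is correct and takes essentially the same route as the paper: the paper's proof also reduces to the fibrewise posetal adjunctions $\LAdjLift_X\dashv\RAdjLift_X$ (from \refToProp{adj-to-modality}), notes that (i) is immediate because the adjunction involves posetal categories---which is precisely what your whiskering-plus-antisymmetry argument spells out---and then obtains (ii) and (iii) directly from (i), exactly as in your cancellation arguments.
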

\begin{proof}
(i) is immediate since the adjunction $\LAdjLift_X\dashv\RAdjLift_X$
involves posetal categories. (ii) and (iii) follow directly from (i).
\end{proof}

The next step is an application of a remarkable result by \cite{HermidaC:fibacf} about
fibred adjunctions as it allows to show that any adjunction in \IdxPos can be \dfn{factored}
as the composition of two adjunctions where one is the identity adjunction on the base
categories. 
For this, recall that $\IdxPos$ has a vertical/cartesian factorisation
system, that is, any 1-arrow $\oneAr{\ple{F,f}}{\Doc}{\aDoc}$ from the 
doctrine $\fun{\Doc}{\C\op}{\Pos}$ to the doctrine $\fun{\aDoc}{\D\op}{\Pos}$ can be
factored by ``change of base''as $\ple{F,\id{\aDoc F\op}}  \circ \ple{\Id{\C},f}$
\[\xymatrix@C=6em@R=5ex{\C\op\ar@/^/[rd]^(.4)\Doc_(.7){}="f"
\ar[d]_{\Id{\C}\op}\\
\C\op\ar[r]^(.35){\aDoc F\op}^(.6){}="g"_(.6){}="k"
\ar[d]_{F\op}&\Pos\\
\D\op\ar@/_/[ru]_(.4){\aDoc}^(.7){}="h"
&\ar"f";"g"_{f}|{\smash{\tdot}}
\ar"k";"h"_{\id{}}|{\smash{\tdot}}}\]
The factorization of the adjunction follows this decomposition for the left adjoint.
Recall Lemma~3.2 from \cite{HermidaC:fibacf} in the case of doctrines.

\begin{lem}\label{lem:adj-factorization}
Let \ple{\C,\D,\LAdj,\RAdj,\unit,\counit} be an adjunction in \Ct{Cat}. If
\fun{\aDoc}{\D\op}{\Pos} is a doctrine, then there is an adjunction 
\ple{\aDoc\LAdj\op,\aDoc,\LAdj,\id{},\RAdj,\aDoc\counit\op,\unit,\counit} in \IdxPos as
depicted in the diagram
\begin{equation}\label{equfx}\xymatrix@C=4.5em{
\aDoc\LAdj\op\ar@/^8pt/[rr]^{\ple{\LAdj,\id{}}}\ar@{}[rr]&& 
\aDoc.\ar@/^8pt/[ll]^{\ple{\RAdj,\aDoc\counit\op}}}
\end{equation}
\end{lem}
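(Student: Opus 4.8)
The plan is to invoke \refToProp{idxpos-adj}: to exhibit the octuple \ple{\aDoc\LAdj\op,\aDoc,\LAdj,\id{},\RAdj,\aDoc\counit\op,\unit,\counit} as an adjunction in \IdxPos it suffices to verify the three conditions listed there. The hypothesis supplies \refToPropItem{idxpos-adj}{1} directly, since the first components of the two candidate 1-arrows are \LAdj and \RAdj and the two candidate 2-arrows are \unit and \counit. So the work reduces to the remaining two conditions, both of which I expect to be routine.

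For \refToPropItem{idxpos-adj}{2} I would observe that \ple{\LAdj,\id{}} is a 1-arrow $\aDoc\LAdj\op\to\aDoc$ simply because its second component is the identity natural transformation of $\aDoc\LAdj\op$, which is trivially natural; and that \ple{\RAdj,\aDoc\counit\op} is a 1-arrow $\aDoc\to\aDoc\LAdj\op$ because its second component, whose value at $Y$ in \D is the monotone map \fun{\aDoc\counit_Y}{\aDoc Y}{\aDoc(\LAdj\RAdj Y)}, is natural in $Y$; this is immediate from the naturality of \counit together with the functoriality of \aDoc.

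The only genuine computation is \refToPropItem{idxpos-adj}{3}, and since a 2-arrow in \IdxPos is a natural transformation subject to a fibrewise inequality, I would just verify those inequalities. For \unit the target 1-arrow is \ple{\RAdj\LAdj,\aDoc\counit\op\LAdj\op} (its second component simplifying because $\LAdjLift=\id{}$), whose lift at $X$ is \fun{\aDoc\counit_{\LAdj X}}{\aDoc\LAdj X}{\aDoc(\LAdj\RAdj\LAdj X)}, while the reindexing appearing in the 2-arrow condition is $\aDoc(\LAdj\unit_X)$; their composite is
\[\aDoc(\LAdj\unit_X)\circ\aDoc\counit_{\LAdj X}=\aDoc(\counit_{\LAdj X}\circ\LAdj\unit_X)=\aDoc(\id{\LAdj X})=\id{\aDoc\LAdj X},\]
using functoriality of \aDoc and the triangle identity $\counit_{\LAdj X}\circ\LAdj\unit_X=\id{\LAdj X}$, so the required inequality holds as an equality. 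For \counit the situation is even simpler: both the lift of the source 1-arrow at $Y$ and the reindexing $(\aDoc\counit\op)_Y$ are literally $\aDoc\counit_Y$, so the inequality is again an equality.

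I do not expect a real obstacle: the single place where anything is used is the triangle identity for \unit in the displayed chain, and everything else is bookkeeping. It is worth noting that the triangle identities making this an adjunction in \IdxPos are inherited from \Ct{Cat}, precisely because a 2-arrow of \IdxPos carries no data beyond its underlying natural transformation, and this is exactly what makes \refToProp{idxpos-adj} applicable.
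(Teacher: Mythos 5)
Your proposal is correct and follows essentially the same route as the paper: invoke \refToProp{idxpos-adj}, note that condition (i) is the hypothesis, and reduce condition (iii) to fibrewise inequalities, where the only substantive step is the computation $\aDoc(\LAdj\unit_X)\circ\aDoc(\counit_{\LAdj X})=\aDoc(\counit_{\LAdj X}\circ\LAdj\unit_X)=\id{\aDoc\LAdj X}$ via the triangle identity, the case of \counit being trivial. The paper even leaves your condition (ii) implicit, so your write-up is, if anything, slightly more complete.
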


\begin{proof}
We apply \refToProp{idxpos-adj} to show 
$\ple{\aDoc\LAdj\op,\aDoc,\LAdj,\id{\aDoc\LAdj\op},\RAdj,
\aDoc\counit\op,\unit,\counit}$ is an adjunction in \IdxPos. 
Since \ple{\C,\D,\LAdj,\RAdj,\unit,\counit} is already an adjunction in
\Ct{Cat}, it remains to check the natural transformations 
\TNat{\unit}{\Id{\C}}{\RAdj\LAdj}
and
\TNat{\counit}{\LAdj\RAdj}{\Id{\D}}
determine 2-arrows in \IdxPos as follows
\[\begin{array}{l@{\kern2em}l}
\twoAr{\unit}{\ple{\Id{\C},\id{\aDoc\LAdj\op}}}
{\ple{\RAdj\LAdj,\aDoc(\counit\LAdj)\op}}&
\twoAr{\counit}{\ple{\LAdj\RAdj,\aDoc\counit\op}}
{\ple{\Id{\D},\id{\aDoc}}.}
\end{array}\]
In other words, the inequalities
\[\id{\aDoc\LAdj X}\order
\DReIdx{\aDoc\LAdj}{\unit_X}\circ\DReIdx{\aDoc}{\counit_{\LAdj X}}\qquad
\DReIdx{\aDoc}{\counit_Y}\order\DReIdx{\aDoc}{\counit_Y}\]
hold for each object $X$ in \C and $Y$ in $\D$.
They are in fact identities: the second is immediate, and the first follows 
from the triangular identity (\ref{triai}) for an adjunction
\begin{equation}\label{tadj}
\DReIdx{\aDoc\LAdj}{\unit_X}\circ\DReIdx{\aDoc}{\counit_{\LAdj X}}=
\DReIdx{\aDoc}{(\counit_{\LAdj X}\circ\LAdj\unit_X)}=
\DReIdx{\aDoc}{\id{\LAdj X}} = \id{\aDoc\LAdj X} 
\end{equation}
by functoriality of $Q$.
\end{proof}

Theorem~3.4 in \cite{HermidaC:fibacf} restricted to the case of doctrines is the
following.

\begin{thm}\label{thm:adj-factorization}
Let \fun{\Doc}{\C\op}{\Pos} and \fun{\aDoc}{\D\op}{\Pos} be
doctrines, and suppose the octuple
\ple{\Doc,\aDoc,\LAdj,\LAdjLift,\RAdj,\RAdjLift,\unit,\counit} is an
adjunction in \IdxPos. Then that adjunction factors through the adjunction
in $(\ref{equfx})$ as
\begin{equation}\label{equf}\xymatrix@C=4.5em{
\Doc\ar@/^8pt/[rr]^{\ple{\Id{\C},\LAdjLift}}\ar@{}[rr]&&
\aDoc\LAdj\op\ar@/^8pt/[ll]^{\ple{\Id{\C},(\Doc\unit\op)(\RAdjLift\LAdj\op)}}
\ar@/^8pt/[rr]^{\ple{\LAdj,\id{}}}\ar@{}[rr]&& 
\aDoc.\ar@/^8pt/[ll]^{\ple{\RAdj,\aDoc\counit\op}}}
\end{equation}
where the first one is \ple{\Doc,\aDoc\LAdj\op,\Id{\C},\LAdjLift,\Id{\C},
(\Doc\unit\op)(\RAdjLift\LAdj\op),\id{},\id{}}.
\end{thm}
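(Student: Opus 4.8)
The plan is to read the factorisation in $(\ref{equf})$ as a composition of two adjunctions in \IdxPos: the vertical one $\ple{\Doc,\aDoc\LAdj\op,\Id{\C},\LAdjLift,\Id{\C},(\Doc\unit\op)(\RAdjLift\LAdj\op),\id{},\id{}}$ on the left, and the adjunction of \refToLem{adj-factorization} on the right. Since the composite of two adjunctions is again an adjunction and the right-hand factor is already known to be one, it suffices to (a) check that the left-hand factor is a genuine adjunction in \IdxPos, and (b) verify that composing the two factors returns the original octuple $\ple{\Doc,\aDoc,\LAdj,\LAdjLift,\RAdj,\RAdjLift,\unit,\counit}$.

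For (a) I would invoke \refToProp{idxpos-adj}. The base datum $\ple{\C,\C,\Id{\C},\Id{\C},\id{},\id{}}$ is the trivial identity adjunction in \Ct{Cat}, and $\ple{\Id{\C},\LAdjLift}$ and $\ple{\Id{\C},(\Doc\unit\op)(\RAdjLift\LAdj\op)}$ are manifestly 1-arrows, their second components being composites of natural transformations. What remains are the two 2-arrow conditions on the identity unit and counit, which for a vertical adjunction between posetal fibres are precisely the two triangle inequalities of the fibrewise adjunction $\LAdjLift_X\dashv\Doc\unit_X\circ\RAdjLift_{\LAdj X}$. The unit inequality $\id{\Doc X}\order\Doc\unit_X\circ\RAdjLift_{\LAdj X}\circ\LAdjLift_X$ is \emph{verbatim} the 2-arrow condition satisfied by $\unit$ in the original adjunction, so it holds by hypothesis. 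The counit inequality $\LAdjLift_X\circ\Doc\unit_X\circ\RAdjLift_{\LAdj X}\order\id{\aDoc\LAdj X}$ I would obtain by first rewriting, using naturality of $\LAdjLift$ along $\unit_X$, its left-hand side as $\aDoc(\LAdj\unit_X)\circ\LAdjLift_{\RAdj\LAdj X}\circ\RAdjLift_{\LAdj X}$; then bounding $\LAdjLift_{\RAdj\LAdj X}\circ\RAdjLift_{\LAdj X}\order\aDoc\counit_{\LAdj X}$ by the original counit condition at $\LAdj X$; and finally collapsing $\aDoc(\LAdj\unit_X)\circ\aDoc\counit_{\LAdj X}=\aDoc(\counit_{\LAdj X}\circ\LAdj\unit_X)=\aDoc(\id{\LAdj X})=\id{}$ via the first triangular identity in $(\ref{triai})$.

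For (b), composing the two left adjoints gives $\ple{\LAdj,\id{}}\circ\ple{\Id{\C},\LAdjLift}=\ple{\LAdj,\LAdjLift}$ at once from the composition law for 1-arrows. The real work, and the step I expect to be the main obstacle, is the composite of the two right adjoints: its first component is $\RAdj$ and its second component at an object $Y$ of \D is $\Doc\unit_{\RAdj Y}\circ\RAdjLift_{\LAdj\RAdj Y}\circ\aDoc\counit_Y$. I would simplify this by applying naturality of $\RAdjLift$ to the arrow $\counit_Y$, rewriting $\RAdjLift_{\LAdj\RAdj Y}\circ\aDoc\counit_Y=\Doc(\RAdj\counit_Y)\circ\RAdjLift_Y$, and then using contravariant functoriality of \Doc together with the second triangular identity $\RAdj\counit_Y\circ\unit_{\RAdj Y}=\id{\RAdj Y}$ to collapse $\Doc\unit_{\RAdj Y}\circ\Doc(\RAdj\counit_Y)=\id{}$, leaving exactly $\RAdjLift_Y$. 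Finally, since the unit and counit of the left-hand factor are identities, and a 2-arrow in \IdxPos is determined by its underlying natural transformation on the base, the usual formulas for the unit and counit of a composite adjunction return precisely $\unit$ and $\counit$. Hence the composite adjunction coincides with the original octuple, which is the asserted factorisation.
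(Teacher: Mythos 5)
Your proposal is correct and takes essentially the same route as the paper's own proof: both apply \refToProp{idxpos-adj} to establish the vertical factor, proving the counit inequality by naturality of $\LAdjLift$, the counit 2-arrow condition of the original adjunction, and the triangular identity, and both collapse the composite of the right adjoints to $\ple{\RAdj,\RAdjLift}$ using naturality of $\RAdjLift$, functoriality of $\Doc$, and the other triangular identity. The only difference is presentational—you carry out the right-adjoint computation pointwise at an object $Y$, where the paper displays it as a commuting diagram of natural transformations.
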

\begin{proof}
We see the \ple{\Doc,\aDoc\LAdj\op,\Id{\C},\LAdjLift,\Id{\C},
(\Doc\unit\op)(\RAdjLift\LAdj\op),\id{\Id{\C}},\id{\Id{\C}}}
is an adjunction in \IdxPos as another application of \refToProp{idxpos-adj}.
Obviously \ple{\C,\C,\Id{\C},\Id{\C},\id{},\id{}} is the identity adjunction
in \Ct{Cat}. To check the natural transformation 
\TNat{\id{\Id{\C}}}{\Id{\C}}{\Id{\C}}
determines 2-arrows in \IdxPos
\[\begin{array}{l@{\quad\mbox{and}\quad}l}
\twoAr{\id{\Id{\C}}}{\ple{\Id{\C},\id{\Doc}}}
{\ple{\Id{\C},(\Doc\unit\op)(\RAdjLift\LAdj\op)\LAdjLift}}&
\twoAr{\id{\Id{\C}}}
{\ple{\Id{\C},\LAdjLift(\Doc\unit\op)(\RAdjLift\LAdj\op)}}
{\ple{\Id{\C},\id{\aDoc\LAdj\op}}}
\end{array}\]
we must see that the inequalities
\[\id{\Doc X}\order
\DReIdx{\Doc}{\unit_X}\circ\RAdjLift_{\LAdj X}\circ\LAdjLift_X\quad\mbox{and}\quad
\LAdjLift_X\circ\DReIdx{\Doc}{\unit_X}\circ\RAdjLift_{\LAdj X}
\order\id{\aDoc\LAdj X}\]
hold for each object $X$ in \C.
The first inequality holds since
$\twoAr{\unit}{\ple{\Id{\C},\id{\Doc}}}
{\ple{\RAdj\LAdj,(\RAdjLift\LAdj\op)\LAdjLift}}$
is a 2-arrow in \IdxPos.
For the second inequality, note that
$\LAdjLift_X\circ\DReIdx{\Doc}{\unit_X}\circ\RAdjLift_{\LAdj X}=
\DReIdx{\aDoc}{\LAdj\unit_X}\circ\LAdjLift_{\RAdj\LAdj X}\circ
\RAdjLift_{\LAdj X}$ since \TNat{\LAdjLift}{\Doc}{\aDoc\LAdj\op}.
Since \twoAr{\counit}{\ple{\LAdj\RAdj,(\LAdjLift\RAdj\op)\RAdjLift}}
{\ple{\Id{\D},\id{\aDoc}}} is a 2-arrow in \IdxPos, we have that
$\LAdjLift_{\RAdj\LAdj X}\circ\RAdjLift_{\LAdj X}\order
\DReIdx{\aDoc}{\counit_{\LAdj X}}$. Now the result follows from (\ref{tadj}).

To see that the composition of the two adjunctions gives the
original adjunction, note that the
top and bottom compositions in (\ref{equf})
give the top and bottom 1-arrow in
\[\xymatrix@C=4.5em{
\Doc\ar@/^6pt/[r]^{\ple{\LAdj,\LAdjLift}}&
\aDoc.\ar@/^6pt/[l]^{\ple{\RAdj,\RAdjLift}}}\]
It is immediate to see that
$\ple{\LAdj,\id{}}\cdot\ple{\Id{\C},\LAdjLift}=
\ple{\LAdj,\LAdjLift}$. For the other composition, the first
components coincide trivially, and for the second components
apply the commutativity of the following diagram of natural
transformations
\[\xymatrix@=5.5em{\aDoc\ar@/^10pt/[rr]^-{\RAdjLift}
\ar[d]^-{\aDoc\counit\op}\ar[r]_-{\RAdjLift}&
\Doc\RAdj\op\ar[d]_-{\Doc\RAdj\op\counit\op}
\ar[r]_-{P((\RAdj\counit)(\unit\RAdj))\op}&
\Doc\RAdj\op\\  
\aDoc\LAdj\op\RAdj\op\ar[r]^-{\RAdjLift\LAdj\op\RAdj\op}&
\Doc\RAdj\op\LAdj\op\RAdj\op\ar[ru]_-{\Doc\unit\op\RAdj\op}}\]
where the square commutes by naturality of \RAdjLift, the right-hand
triangle by functoriality of \Doc, and the top triangle by one of the
triangular identities for adjunctions (\ref{triai}). Finally one sees
immediately the compositions of the 2-arrows give the 2-arrows of the
original adjunction.
\end{proof}

\begin{cor}\label{cor:adj-to-modality}
Let \fun{\Doc}{\C\op}{\Pos} and \fun{\aDoc}{\D\op}{\Pos} be
doctrines, and suppose the octuple
\ple{\Doc,\aDoc,\LAdj,\LAdjLift,\RAdj,\RAdjLift,\unit,\counit} is an
adjunction in \IdxPos. Then
$\necmod=\LAdjLift\cdot(\Doc\unit\op)\cdot(\RAdjLift\LAdj\op)$ is \acomonadic operator on
the doctrine \fun{\aDoc\LAdj\op}{\C\op}{\Pos}.
\end{cor}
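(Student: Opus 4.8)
The plan is to read off this statement from \refToThm{adj-factorization} together with \refToProp{adj-to-modality}. By \refToThm{adj-factorization}, the given adjunction \ple{\Doc,\aDoc,\LAdj,\LAdjLift,\RAdj,\RAdjLift,\unit,\counit} factors in \IdxPos as a composite of two adjunctions, the first of which is the vertical adjunction
\[\ple{\Doc,\aDoc\LAdj\op,\Id{\C},\LAdjLift,\Id{\C},(\Doc\unit\op)(\RAdjLift\LAdj\op),\id{},\id{}}.\]
This first factor is an adjunction between the doctrines \Doc and $\aDoc\LAdj\op$ over the \emph{same} base \C, whose base functors are both \Id{\C} and whose unit and counit are identity 2-cells; hence it has exactly the shape to which \refToProp{adj-to-modality} applies, with left-adjoint lift \LAdjLift and right-adjoint lift $(\Doc\unit\op)(\RAdjLift\LAdj\op)$.

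First I would invoke \refToProp{adj-to-modality} on this vertical factor. Since the codomain doctrine of its left adjoint is $\aDoc\LAdj\op$, part (ii) of that proposition yields \acomonadic operator on $\aDoc\LAdj\op$ given as the vertical composite of the left-adjoint lift after the right-adjoint lift, that is
\[\necmod=\LAdjLift\cdot\big((\Doc\unit\op)(\RAdjLift\LAdj\op)\big)
=\LAdjLift\cdot(\Doc\unit\op)\cdot(\RAdjLift\LAdj\op),\]
the last equality holding by associativity of vertical composition of 2-cells. This is precisely the operator in the statement, and its naturality together with conditions (\ref{def:modality:1}) and (\ref{def:modality:2}) are inherited directly from \refToProp{adj-to-modality}, which derives them from the fibrewise adjunction furnished by part (i) of that proposition, the relevant unit and counit inequalities being encoded by the two identity 2-cells of the factor.

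I do not expect a genuine obstacle here, since the real work was already carried out in \refToThm{adj-factorization}; the only point demanding care is the bookkeeping that matches the second components of the vertical factor against the hypotheses of \refToProp{adj-to-modality} and confirms the order of composition, namely that \necmod sends a fibre element of $\aDoc\LAdj\op$ first along the right-adjoint lift into the corresponding fibre of \Doc and then back along \LAdjLift, in agreement with the componentwise rule $(\psi\cdot\phi)_X=\psi_X\circ\phi_X$ for vertical composition.
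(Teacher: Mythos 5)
Your proposal is correct and follows exactly the paper's own route: the paper's proof is precisely to apply \refToProp{adj-to-modality} to the first (vertical) adjunction in the factorisation~(\ref{equf}) provided by \refToThm{adj-factorization}, which is the adjunction \ple{\Doc,\aDoc\LAdj\op,\Id{\C},\LAdjLift,\Id{\C},(\Doc\unit\op)(\RAdjLift\LAdj\op),\id{},\id{}} you identify. Your additional bookkeeping---matching the right-adjoint lift $(\Doc\unit\op)(\RAdjLift\LAdj\op)$ to the hypotheses of \refToProp{adj-to-modality} and checking the order of vertical composition---is exactly the verification the paper leaves implicit.
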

\begin{proof}
It follows immediately applying \refToProp{adj-to-modality} to the
first adjunction in (\ref{equf}).
\end{proof}

\begin{exa}\label{ex:presh}
Let \C and \D be category with pullbacks, and let
\ple{\C,\D,\LAdj,\RAdj,\unit,\counit} be an adjunction where
\fun{\LAdj}{\C}{\D} preserves pullbacks. As in \refToEx{pb-cat}, there
is an adjunction
\ple{\Sub[\C]{},\Sub[\D]{},\LAdj,\LAdjLift,
\RAdj,\RAdjLift,\unit,\counit} on the doctrines of subobjects.
By \refToCor{adj-to-modality}, there is \acomonadic operator on
the doctrine \fun{\Sub[\D]{}\LAdj\op}{\C\op}{\Pos},  
defined as $\necmod_X\alpha=\LAdj\alpha'$,
where $X \in \C_0$ and $\alpha \in \Sub[\D]{\LAdj X}$ and
$\alpha'\in\Sub[\C]{X}$ is defined by the following pullback diagram
\[\xymatrix@C=3em{P\ \ar[d]\ar@{^{(}->}[r]^{\alpha'}
\ar@{}[dr]|{\text{p.b.}}&
X \ar[d]^{\unit_X}\\
\RAdj A\ \ar@{^{(}->}[r]_{\RAdj \alpha}&\RAdj\LAdj X}\]
The construction is reminiscent of that of a
modal operator from a geometric morphism between elementary toposes,
see the original paper \cite{GhilardiS:modtpc}, or Section~10.1 in
\cite{BraunerG07}, and also \cite{Reyes91,AwodeyBS02,AwodeyB03}.  
Indeed, a geometric morphism from the topos \topos to the topos
\atopos is an adjunction
\ple{\topos,\atopos,\LAdj,\RAdj,\unit,\counit} such that the left
adjoint $\LAdj$ preserves finite limits.

The paradigmatic example of a \comonadic operator obtained from a geometric morphism is that
offered by presheaves over a category \C. Recall that the category of presheaves over \C
is the functor category \Psh{\C}. If we let $\C_0$ be the discrete category of the objects
of \C and write \fun{i}{\C_0}{\C} the inclusion functor, post-composition with it determines
a functor \fun{\LAdj=\blank\circ i\op}{\Psh{\C}}{\Psh{\C_0}} which preserves all limits
and colimits as these are computed pointwise---although $\C_0=\C_0\op$ we maintain the
redundant notation $\C_0\op$ just for mental hygiene. Since the functor category
\Psh{\C} is complete and has a generating set, \LAdj has a right adjoint
\fun{\RAdj}{\Psh{\C_0}}{\Psh{\C}}. Hence, $\LAdj\dashv\RAdj$ is a geometric morphism, thus
it induces an \comonadic operator on
\[\xymatrix{\Psh{\C}\op\ar[rr]^{\Sub[\Psh{\C_0}]{}\LAdj\op}\ar[rd]_{\LAdj\op}&&\Pos\\
&\Psh{\C_0}\ar[ru]_{\Sub[\Psh{\C_0}]{}}}\]

Finally, note that, if $\KFr = \ple{W,R}$  is a Kripke frame with $R$ reflexive and
transitive, taking $\C = \KFr\op$, the above geometric morphism provides another way to
construct Kripke models categorically.
In detail, a presheaf $D$ over $\KFr\op$ specifies, for each world $w\in W$, a set $D(w)$,
modelling individuals which exist at the world $w$, 
and, for each $wRv$, a function \fun{D_{wv}}{D_w}{D_v}, describing how individuals
existing at the world $w$ ``evolve'' in the world $v$. 
A ``formula'' $\alpha$ on $D$ is a family of subsets, that is, for each world $w\in W$, $\alpha_w\subseteq D_w$, and 
the modal operator identifies those formulas which are subpresheaves of $D$, namely, those $\alpha$ such that, for all $w,v\in W$, if $wRv$ then $\alpha_w \subseteq D_{wv}^{-1}(\alpha_v)$.
\end{exa}

We conclude this section showing that the construction in \refToCor{adj-to-modality}
extends to a 2-functor \fun{\AMFun}{\Adj{\IdxPos}}{\NecIdxPos}.

For an adjunction \adj in \IdxPos write
\[\necmod^\adj\colon=
\LAdjLift^\adj\cdot(\Doc^\adj(\unit^\adj)\op)\cdot(\RAdjLift^\adj(\LAdj^\adj)\op)\]
which is \acomonadic operator by \refToCor{adj-to-modality}. Let
$\AMFun(\adj)=\ple{\aDoc^\adj(\LAdj^\adj)\op,\necmod^\adj}$.
For a 1-arrow \oneAr{\ple{F,f,G,g,\theta}}{\adj}{\aadj}, let
\begin{equation}\label{ontf}
\AMFun(\ple{F,f,G,g,\theta})\colon=\ple{F,g(\LAdj^\adj)\op}.
\end{equation}
For a 2-arrow
\twoAr{\ple{\alpha,\beta}}{\ple{F,f,G,g,\theta}}{\ple{F',f',G',g',\theta'}},
let
\begin{equation}\label{twtf}
\AMFun(\ple{\alpha,\beta})\colon=\alpha.
\end{equation}

\begin{prop}\label{prop:2-fun-adj-modality}
With the assignments above, \fun{\AMFun}{\Adj{\IdxPos}}{\NecIdxPos} is a 2-functor.
\end{prop}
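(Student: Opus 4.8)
The plan is to check in turn that $\AMFun$ is well defined on objects, on 1-arrows and on 2-arrows, and then that it preserves identities and the two compositions. On objects there is nothing to do beyond \refToCor{adj-to-modality}: for every adjunction $\adj$ it already guarantees that $\necmod^\adj$ is \acomonadic operator on the doctrine $\aDoc^\adj(\LAdj^\adj)\op$, so $\AMFun(\adj)=\ple{\aDoc^\adj(\LAdj^\adj)\op,\necmod^\adj}$ genuinely is an object of \NecIdxPos. All the work is in the action on 1-arrows.

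Given a 1-arrow $\oneAr{\ple{F,f,G,g,\theta}}{\adj}{\aadj}$ I would first confirm that $\ple{F,g(\LAdj^\adj)\op}$ is a 1-arrow of \IdxPos with domain $\aDoc^\adj(\LAdj^\adj)\op$ and codomain $\aDoc^\aadj(\LAdj^\aadj)\op$. Its base is $\fun{F}{\C^\adj}{\C^\aadj}$, and the codomain of $g(\LAdj^\adj)\op$ is $\aDoc^\aadj G\op(\LAdj^\adj)\op=\aDoc^\aadj(G\LAdj^\adj)\op$; using the identity $G\LAdj^\adj=\LAdj^\aadj F$ that holds because $\ple{F,G,\theta}$ is a homomorphism of adjunctions in \Ct{Cat}, this equals $\aDoc^\aadj(\LAdj^\aadj)\op F\op$, exactly the codomain required of the natural transformation underlying a 1-arrow with base $F$.

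The main point, and the step I expect to be the main obstacle, is to verify that this 1-arrow lands in \NecIdxPos, \ie that it preserves the interior operator: for each object $X$ in $\C^\adj$,
\[g_{\LAdj^\adj X}\circ\necmod^\adj_X\order\necmod^\aadj_{FX}\circ g_{\LAdj^\adj X}.\]
Expanding $\necmod^\adj_X=\LAdjLift^\adj_X\circ\Doc^\adj(\unit^\adj_X)\circ\RAdjLift^\adj_{\LAdj^\adj X}$ as in \refToCor{adj-to-modality}, I would rewrite the left-hand side in four moves. First, replace $g_{\LAdj^\adj X}\circ\LAdjLift^\adj_X$ by $\LAdjLift^\aadj_{FX}\circ f_X$, using the coincidence of the natural transformations $(g(\LAdj^\adj)\op)\LAdjLift^\adj$ and $(\LAdjLift^\aadj F\op)f$ demanded of a 1-arrow of \Adj{\IdxPos} in \refToProp{catadj}. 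Second, push $f_X$ past $\Doc^\adj(\unit^\adj_X)$ by naturality of $f$ at the arrow $\unit^\adj_X$, getting $\Doc^\aadj(F\unit^\adj_X)\circ f_{\RAdj^\adj\LAdj^\adj X}$. Third, apply the condition that $\theta$ is a 2-arrow of \IdxPos, evaluated at $\LAdj^\adj X$, that is
\[f_{\RAdj^\adj\LAdj^\adj X}\circ\RAdjLift^\adj_{\LAdj^\adj X}\order\Doc^\aadj(\theta_{\LAdj^\adj X})\circ\RAdjLift^\aadj_{G\LAdj^\adj X}\circ g_{\LAdj^\adj X},\]
so that composing on the left with the monotone map $\LAdjLift^\aadj_{FX}\circ\Doc^\aadj(F\unit^\adj_X)$ produces the required inequality. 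Finally, merge $\Doc^\aadj(F\unit^\adj_X)\circ\Doc^\aadj(\theta_{\LAdj^\adj X})$ into $\Doc^\aadj(\theta_{\LAdj^\adj X}\circ F\unit^\adj_X)$ by contravariant functoriality of $\Doc^\aadj$, and invoke the homomorphism-of-adjunctions triangle $\theta_{\LAdj^\adj X}\circ F\unit^\adj_X=\unit^\aadj_{FX}$ together with $G\LAdj^\adj X=\LAdj^\aadj FX$ (hence $\RAdjLift^\aadj_{G\LAdj^\adj X}=\RAdjLift^\aadj_{\LAdj^\aadj FX}$); the right-hand side then reads precisely $\necmod^\aadj_{FX}\circ g_{\LAdj^\adj X}$. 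This chase is the crux because it is the only place where all three conditions defining a 1-arrow of \Adj{\IdxPos} are used at once, and one must keep track of the variances of $\Doc^\aadj$ and of the reindexings.

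For a 2-arrow $\twoAr{\ple{\alpha,\beta}}{\ple{F,f,G,g,\theta}}{\ple{F',f',G',g',\theta'}}$, since 2-arrows of \NecIdxPos are just 2-arrows of \IdxPos, I only need $\alpha$ to be a 2-arrow $\ple{F,g(\LAdj^\adj)\op}\to\ple{F',g'(\LAdj^\adj)\op}$, \ie $g_{\LAdj^\adj X}\order\aDoc^\aadj(\LAdj^\aadj\alpha_X)\circ g'_{\LAdj^\adj X}$ for each $X$; this follows immediately from the 2-arrow inequality for $\beta$ in \refToProp{catadj} at the object $\LAdj^\adj X$, combined with the equality $\beta_{\LAdj^\adj X}=\LAdj^\aadj\alpha_X$ provided by the 2-cell condition relating $\ple{F,G,\theta}$ and $\ple{F',G',\theta'}$. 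Preservation of identities and of both compositions is then routine bookkeeping: the identity 1-arrow has $g=\id{}$, so $\AMFun$ of it is an identity; for a composable pair of 1-arrows the equality $G\LAdj^\adj X=\LAdj^\aadj FX$ gives $g'_{G\LAdj^\adj X}=g'_{\LAdj^\aadj FX}$, whence the two ways of forming the second component of the composite agree; and on 2-arrows $\AMFun$ is the first projection $\ple{\alpha,\beta}\mapsto\alpha$, which is manifestly compatible with vertical and horizontal composition.
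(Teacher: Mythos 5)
Your proposal is correct and follows essentially the same route as the paper: the same reduction to well-definedness on 1-arrows and 2-arrows (with functoriality as routine bookkeeping), and the same key inequality $g_{\LAdj^\adj X}\circ\necmod^\adj_X\order\necmod^\aadj_{FX}\circ g_{\LAdj^\adj X}$ established from naturality of $f$, functoriality of $\Doc^\aadj$, and the three conditions of \refToProp{catadj}. Your four-step chain of inequalities is just a linearized version of the commutative diagram the paper chases, and the 2-arrow verification via $\LAdj^\aadj\alpha=\beta\LAdj^\adj$ is identical.
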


\begin{proof}
We just have to check that the identities in (\ref{ontf}) and (\ref{twtf}) determine
arrows in \NecIdxPos, as the algebraic identities will then follow immediately.
Since
$\TNat{g(\LAdj^\adj)\op}{\aDoc^\adj(\LAdj^\adj)\op}{\aDoc^\aadj (G\LAdj^\adj)\op}$
and $G\LAdj^\adj=\LAdj^\aadj F$ by \refToProp{catadj}, in order to see that
\[\oneAr{\ple{F,g(\LAdj^\adj)\op}}
{\ple{\aDoc^\adj(\LAdj^\adj)\op,\necmod^\adj}}
{\ple{\aDoc^\aadj(\LAdj^\aadj)\op,\necmod^\aadj}}\]
ia a 1-arrow in \NecIdxPos we are left to check that
for every object $X$ in the base category of $\aDoc(\LAdj^\adj)\op$, we have 
\[g_{\LAdj^\adj X}\cdot\necmod^\adj_X\order\necmod^\aadj_{FX}\cdot g_{\LAdj^\adj X}.\]
In the diagram of natural transformations
\[\xymatrix@C=6em@R=.65em{
\aDoc^\adj(\LAdj^\adj)\op\arnta[r]^-{\RAdjLift^\adj(\LAdj^\adj)\op}
\arntb[dddddd]^-{g(\LAdj^\adj)\op}&
\Doc^\adj(\RAdj^\adj\LAdj^\adj)\op\arnta[r]^-{\Doc^\adj(\unit^\adj)\op}
\arnta[ddd]_{f(\RAdj^\adj\LAdj^\adj)\op}\ar@{{}{}}[rddd]|{*}&
\Doc^\adj\arnta[r]^-{\LAdjLift^\adj\ }\arnta[ddd]_-{f}&
\aDoc^\adj(\LAdj^\adj)\op\arnta[dd]^-{g(\LAdj^\adj)\op}\\&&& \\
&&&\aDoc^\aadj(G\LAdj^\adj)\op\\ 
&
\Doc^\aadj(F\RAdj^\adj\LAdj^\adj)\op\arnta[r]^-{\Doc^\aadj(F\unit^\adj)\op}&
\Doc^\aadj F\op\arnta[r]^-{\kern1.3ex\LAdjLift^\aadj F\op}&
\Doc^\aadj(\LAdj^\aadj F)\op\ar@{2-2-}[u]\\ \\ \\
\aDoc^\aadj(\LAdj^\adj)\op\arnta[r]^-{\Doc^\aadj\theta(\LAdj^\adj)\op}
\arnta[rddd]_-{\RAdjLift^\aadj(\LAdj^\aadj F)\op}
\ar@{{}{}}[ruuuuuu]|(.3)*=0[@]{\geq}&
\Doc^\aadj(\RAdj^\aadj G\LAdj^\adj)\op\arntb[uuu]^-{\Doc^\aadj(\theta\LAdj^\adj)\op}
\arnta[uuur]^-{\Doc^\aadj(\theta\LAdj^\adj\cdot F\unit^\adj)\op}\\ \\ \\
&\Doc^\aadj(\RAdj^\aadj\LAdj^\aadj F)\op\arnta[uuuuuur]_-{\Doc^\aadj(\unit^\aadj F)\op}
}\]
the marked square commutes by naturality of $f$, the triangle by functoriality of
$\Doc^\aadj$, and all the other paths commutes (possibly up to inequality as shown) by
\refToProp{catadj}.

Given now a 2-arrow
$\twoAr{\ple{\alpha,\beta}}{\ple{F,f,G,g,\theta}}{\ple{F',f',G',g',\theta'}}$ in 
\Adj{\IdxPos} to see that
$\twoAr{\alpha}{\ple{F,g\LAdj^\adj}}{\ple{F',g'\LAdj^\adj}}$ is a
2-arrow in \NecIdxPos, we have to show that, for every object $X$ in the base category
of $\aDoc(\LAdj^\adj)\op$, it is the case that 
$g_{\LAdj^\adj X}\order\DReIdx{\aDoc'}{\LAdj^\aadj\alpha_X}\cdot g'_{\LAdj^\adj X}$. 
By \refToProp{catadj}, the equality $\LAdj^\aadj\alpha=\beta\LAdj^\adj$ holds  and,  
since \twoAr{\beta}{\ple{G,g}}{\ple{G',g'}} in \IdxPos, we obtain that
$g_{\LAdj^\adj X}\order\DReIdx{\aDoc'}{\beta_{\LAdj^\adj X}}\cdot g'_{\LAdj^\adj X}$, as needed.
\end{proof}

\begin{exa}
A particular example of \comonadic operators is found in the categorical semantics of the
linear exponential modality (a.k.a. bang modality) of propositional linear logic provided by
linear-nonlinear adjunctions.  
A \dfn{linear-nonlinear adjunction} is a monoidal adjunction between a symmetric monoidal
category and a cartesian category; the induced comonad on the symmetric monoidal
category interprets the bang modality, see \cite{Benton94}.
The categorical notion swiftly extends to doctrines where the construction in
\refToCor{adj-to-modality} provides a model of the bang modality in a higher order setting.
The role of the cartesian category is played by a
\dfn{primary doctrine}, see \eg \cite{EmmeneggerJ:eledac}), that is, a doctrine
\fun{\Doc}{\C\op}{\Pos} where \C has finite products and, for each object $X$ in \C, the
fiber $\Doc X$ carries an inf-semilattice structure preserved by reindexing.
The role of the symmetric monoidal category is played by a
\emph{$($symmetric$)$ monoidal doctrine}, which one defines following the work on
monoidal indexed categories of \cite{MoellerV20}. We give some of the details in
Appendix~\ref{append}, but shall develop fully the particular instance of \comonadic
operators in a subsequent paper.
\end{exa}

\section{\Comonadic modalities from comonads}\label{sect:comonad}

As is well-known, there is a deep connection between comonads and adjunctions in a
2-category: every adjunction determines a comonad.  Viceversa, when the 2-category admits
the Eilenberg-Moore construction for comonads, a comonad generates an adjunction. This
connection is particularly interesting when we consider a left exact comonad \cmdfun on a
topos \topos: the category of coalgebras \EM{\topos}{\cmdfun} is a topos and the
Eilenberg-Moore adjunction between \EM{\topos}{\cmdfun} and \topos is a geometric
morphism, see \eg\cite{MacLaneM92}.
As we have seen in \refToEx{presh}, geometric morphisms generate \comonadic
operators; hence, combining these two facts, we obtain that a left exact comonads on an
elementary topos determines \acomonadic operator.

In this section, we study the relationship between adjunctions and comonads in the
2-category \IdxPos of doctrines, showing how comonads generate adjunctions, as
expected, and \comonadic operators from those. We start by determining comonads in
\IdxPos.

\begin{prop}\label{prop:comonad}
Let \fun{\Doc}{\C\op}{\Pos} be a doctrine. 
A comonad on \Doc is completely determined by a quadruple
$\cmd=\ple{\cmdfun,\cmdlift,\comul,\cun}$ where 
\begin{enumerate}[{\normalfont(i)}]
\item \ple{\cmdfun,\comul,\cun} is a comonad on \C;
\item\label{prop:comonad:2}
\oneAr{\ple{\cmdfun,\cmdlift}}{\Doc}{\Doc} is a 1-arrow in \IdxPos;
\item\label{prop:comonad:3}
 \twoAr{\comul}{\ple{\cmdfun,\cmdlift}}{\ple{\cmdfun^2,(\cmdlift\cmdfun\op)\cmdlift}}
and \twoAr{\cun}{\ple{\cmdfun,\cmdlift}}{\ple{\Id{\C},\id{}}} are 2-arrows in \IdxPos.
\end{enumerate}
\end{prop}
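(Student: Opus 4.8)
The plan is to follow the pattern of the proof of \refToProp{idxpos-adj}: apply the forgetful 2-functor $\fun{}{\IdxPos}{\Ct{Cat}}$ and then recover the remaining structure by bookkeeping. Every 2-functor preserves comonads, so applying the forgetful 2-functor to a comonad \ple{\Doc,c,\comul,\cun} in \IdxPos yields a comonad on the base category \C whose endo-1-arrow is the first component \cmdfun of $c=\ple{\cmdfun,\cmdlift}$ and whose comultiplication and counit are the underlying natural transformations of $\comul$ and $\cun$; this is condition~(i). Condition~(ii) is immediate, since by \refToDefItem{adjcom}{2} the endo-1-arrow $c$ of a comonad in \IdxPos is, by definition, a 1-arrow $\ple{\cmdfun,\cmdlift}$ of \IdxPos.

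For condition~(iii) I would first compute the composite 1-arrows occurring as the codomains of $\comul$ and $\cun$, using the composition rule for 1-arrows in \IdxPos. Composing $\ple{\cmdfun,\cmdlift}$ with itself gives $cc=\ple{\cmdfun^2,(\cmdlift\cmdfun\op)\cmdlift}$, while the identity 1-arrow $e_\Doc$ is $\ple{\Id{\C},\id{}}$. Hence the comonad 2-arrows $\twoAr{\comul}{c}{cc}$ and $\twoAr{\cun}{c}{e_\Doc}$ are exactly 2-arrows $\twoAr{\comul}{\ple{\cmdfun,\cmdlift}}{\ple{\cmdfun^2,(\cmdlift\cmdfun\op)\cmdlift}}$ and $\twoAr{\cun}{\ple{\cmdfun,\cmdlift}}{\ple{\Id{\C},\id{}}}$ in \IdxPos, as claimed. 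Spelling out the defining inequality of a 2-arrow in \IdxPos, these two requirements read $\cmdlift_X\order_X(\DReIdx{\Doc}{\comul\op})_X\circ((\cmdlift\cmdfun\op)\cmdlift)_X$ and $\cmdlift_X\order_X(\DReIdx{\Doc}{\cun\op})_X$ for every object $X$ in \C.

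Conversely, I would check that data satisfying~(i)--(iii) reassemble into a comonad in \IdxPos, \ie that the counit and coassociativity diagrams~(\ref{dcom}) commute in \IdxPos. The key observation making this routine is that a 2-arrow of \IdxPos carries no information beyond its underlying natural transformation---the inequality in its definition is a mere property---so two parallel 2-arrows of \IdxPos are equal precisely when their underlying natural transformations agree. Thus each diagram in~(\ref{dcom}) commutes in \IdxPos if and only if it commutes on underlying natural transformations in \Ct{Cat}, and this holds because \ple{\cmdfun,\comul,\cun} is a comonad on \C by~(i). I expect no genuine obstacle here; the only point demanding a little attention is matching the horizontal- and vertical-composition conventions of \IdxPos (in particular the composite second component $(\cmdlift\cmdfun\op)\cmdlift$) against the abstract comonad diagrams, after which the verification is, as for \refToProp{idxpos-adj}, plain bookkeeping.
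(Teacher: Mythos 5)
Your proposal is correct and takes essentially the same route as the paper, whose proof is simply ``Straightforward'': the intended argument is precisely the definitional bookkeeping you spell out, mirroring \refToProp{idxpos-adj}. Your key observation---that a 2-arrow in \IdxPos is determined by its underlying natural transformation, the inequality being a mere property, so the comonad diagrams commute in \IdxPos exactly when they do in \Ct{Cat}---is the point the paper leaves implicit, and your unfolded inequalities agree with those recorded in the remark following the proposition.
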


\begin{proof}
Straightforward.
\end{proof}

\begin{rem}
More explicitly, \refToPropItem{comonad}{2} requires that
\TNat{\comul}{\Doc}{\Doc\cmdfun\op} 
and \refToPropItem{comonad}{3} states that, for each object $X$ in  
\C, the following inequalities hold
\[\cmdlift_X\order\DReIdx{\Doc}{\comul_X}\circ \cmdlift_{\cmdfun X}\circ \cmdlift_X
\qquad\mbox{ and }\qquad
\cmdlift_X\order\DReIdx{\Doc}{\cun_X}.\]
\end{rem}

For abstract reasons, a comonad in \IdxPos always admits the Eilenberg-Moore construction,
see \cite{BlackwellR:twodmt}. Here we limit ourselves to present the 
direct computation of the Eilenberg-Moore object for a comonad
$\cmd=\ple{\cmdfun,\cmdlift,\comul,\cun}$ on the doctrine
\fun{\Doc}{\C\op}{\Pos}.
The Eilenbeerg-Moore object for \cmd can be given on the doctrine
\fun{\EM{\Doc}{\cmd}}{\big(\EM{\C}{\cmdfun}\big)\op}{\Pos} defined as follows.

The category $\EM{\C}{\cmdfun}$ is the category of coalgebras for the comonad
$\ple{\cmdfun,\comul,\cun}$ on \C, namely, 
objects are pairs $\ple{C,c}$ where $C$ is an object in \C and $\oneAr{c}{C}{\cmdfun C}$
is an arrow in \C such that the diagram
\[\xymatrix{&C\ar[d]^-{c}\ar[ld]_{\id{C}}\ar[r]^-{c}&\cmdfun C\ar[d]^{\comul_C}\\
C&\cmdfun C\ar[l]^{\cun_C}\ar[r]_{\cmdfun c}&\cmdfun\cmdfun C}\]
commutes, and an arrow \fun{f}{\ple{C,c}}{\ple{C',c'}} is an arrow \oneAr{f}{C}{C'} in \C,
such that
\[\xymatrix{C\ar[d]_-{c}\ar[r]^-{f}&C'\ar[d]^{c'}\\
\cmdfun C\ar[r]^{\cmdfun f}&\cmdfun C'.}\]

With the intention to produce the doctrine
\fun{\EM{\Doc}{\cmd}}{\big(\EM{\C}{\cmdfun}\big)\op}{\Pos}, 
for each coalgebra \ple{C,c} let \EM{\Doc}{\cmd}\ple{C,c} be the suborder of
$\Doc C$ on the subset 
\set{\alpha\in\Doc C}{\alpha\order\DReIdx{\Doc}{c}(\cmdlift_C(\alpha))}.

Given an arrow \oneAr{f}{\ple{C,c}}{\ple{C',c'}} in \EM{\C}{\cmdfun} and
$\beta\in\EM{\Doc}{\cmd}\ple{C',c'}$, note that
$\beta\order\DReIdx{\Doc}{c'}(\cmdlift_{C'}(\beta))$ by definition of
\EM{\Doc}{\cmd}\ple{C',c'}. Thus
\[\begin{array}{r@{}l}\DReIdx{\Doc}{f}(\beta)\order
\DReIdx{\Doc}{f}(\DReIdx{\Doc}{c'}(\cmdlift_{C'}(\beta))){}&{}
=\DReIdx{\Doc}{(c'f)}(\cmdlift_{C'}(\beta)))
=\DReIdx{\Doc}{(fKc)}(\cmdlift_{C'}(\beta)))\\[1ex]
&{}=\DReIdx{\Doc}{c}(\DReIdx{\Doc}{\cmdfun (f)}(\cmdlift_{C'}(\beta)))
=\DReIdx{\Doc}{c}(\cmdlift_C(\DReIdx{\Doc}{f}(\beta))).
\end{array}\]
So \DReIdx{\Doc}{f} sends elements of \EM{\Doc}{\cmd}\ple{C',c'} to elements of \EM{\Doc}{\cmd}\ple{C,c}: let
\DReIdx{\EM{\Doc}{\cmd}}{f} be the restriction of \DReIdx{\Doc}{f}. It follows immediately
that \EM{\Doc}{\cmd} is a doctrine.

\begin{rem}
Note that the inequality $\DReIdx{\Doc}{c}(\cmdlift_C(\alpha)) \order \alpha$ holds
for every $\alpha \in \Doc C$, by properties of $c$ and $\cun_C$.
Hence the elements of $\EM{\Doc}{\cmd}\ple{C,c}$ are the fixpoints of
$\DReIdx{\Doc}{c}\circ\cmdlift_C$.
Furthermore, as we shall see, $\DReIdx{\Doc}{c}\circ\cmdlift_C$ is an idempotent on
$\Doc C$ (it is a consequence of \refToProp{cmd-to-modality}).
Thus, as in \Pos idempotents split, one gets $\EM{\Doc}{\cmd}\ple{C,c}$ by splitting
$\DReIdx{\Doc}{c}\circ\cmdlift_C$.
\end{rem}

Next we introduce the \dfn{forgetful} 1-arrow
\oneAr{\ple{\Forget,\ForgetLift}}{\EM{\Doc}{\cmd}}{\Doc} as follows:
the functor $\fun{\Forget}{\EM{\C}{\cmdfun}}{\C}$ is the actual forgetful functor
from the category of coalgebras; the natural transformation
\TNat{\ForgetLift}{\EM{\Doc}{\cmd}}{\Doc(\Forget)\op} is given by the inclusion of
$\EM{\Doc}{\cmd}\ple{C,c}$ into $\Doc C$ as \ple{C,c} varies among the objects of
\EM{\C}{\cmdfun}.
It is immediate to see the functor $\Forget$ is faithful and, for each object \ple{C,c} in
$\EM{\C}{\cmdfun}$, the map $\ForgetLift_{\ple{C,c}}$ is injective.

Finally the universal 2-arrow
\twoAr{\unita}{\ple{\Forget,\ForgetLift}}
{\ple{\cmdfun,\cmdlift}\ple{\Forget,\ForgetLift}} as requested in 
(\ref{udata}) is given by the family $\unita$ given by
\[\unita_{\ple{C,c}}\colon=\fun{c}{C}{\cmdfun C},\quad
\mbox{ as }\ple{C,c}\mbox{ varies among the objects in }\EM{\C}{\cmdfun}.\]
One sees immediately that \TNat{\unita}{\Forget}{\cmdfun\Forget}. It determines an
appropriate 2-arrow 
in \IdxPos because for any $\alpha\in\EM{\Doc}{\cmd}\ple{C,c}$, by definition of
\EM{\Doc}{\cmd}\ple{C,c} one has that
\[\alpha\order\DReIdx{\Doc}{c}(\cmdlift_C(\alpha))
=\left(\DReIdx{\Doc}{\unita_{\ple{C,c}}} \circ \cmdlift\big(\Forget\big)\op_{\ple{C,c}}\right)(\alpha)\]
After introducing the dramatis person\ae, we are ready to prove the characterization
of the Eilenberg-Moore construction for a comonad in \IdxPos.

\begin{thm}\label{thm:em-obj}
Let $\fun{\Doc}{\C\op}{\Pos}$ be a doctrine and $\cmd$ a comonad on $\Doc$.
Then
\[\xymatrix@C=5em@R=1em{&\Doc\ar[dd]^-{\ple{\cmdfun,\cmdlift}}_(.6){}="a"\\
\EM{\Doc}{\cmd}\ar[ru]^{\ple{\Forget,\ForgetLift}}
\ar[rd]_-{\ple{\Forget,\ForgetLift}}^{}="b"\\  
\ar@{=>}"b";"a"^-{\unita}&\Doc}\]
is the Eilenberg-Moore construction for $\cmd$ in \IdxPos.
\end{thm}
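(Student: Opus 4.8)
The plan is to verify directly the universal property spelled out in \refToDefItem{adjcom}{3}, exploiting the forgetful 2-functor $\fun{}{\IdxPos}{\Ct{Cat}}$ to reduce the base-category half of the problem to the ordinary Eilenberg--Moore construction in \Ct{Cat} (which exists), and disposing of the fibre half by a single inequality. Throughout I use that two 2-arrows of \IdxPos are equal precisely when their underlying natural transformations agree, so that every ``commuting diagram of 2-arrows'' condition is tested on base components alone.

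First I would check that the canonical cone $\ple{\Forget,\ForgetLift}$ together with $\unita$ satisfies the coherence diagrams (\ref{1cohe}). By the remark above, these reduce to the equalities of natural transformations $\comul_C\circ c=\cmdfun c\circ c$ and $\cun_C\circ c=\id{C}$ for each object $\ple{C,c}$, which hold exactly because $\ple{C,c}$ is a coalgebra; the fibre parts carry no equational content since $\ForgetLift$ is a componentwise inclusion. Hence $\unita$ is a legitimate universal 2-arrow as in (\ref{udata}).

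Next, the universal factorisation of 1-arrows. Given a doctrine $\ple{\B,\bDoc}$, a 1-arrow $\oneAr{\ple{X,\hat x}}{\ple{\B,\bDoc}}{\Doc}$, and a 2-arrow $\xi$ exhibiting $\ple{\ple{X,\hat x},\xi}$ as a pair satisfying (\ref{1cohe}), I would proceed in two stages. On bases, the underlying $\TNat{\xi}{X}{\cmdfun X}$ equips each $Xb$ with a coalgebra structure $\fun{\xi_b}{Xb}{\cmdfun Xb}$, and \Ct{Cat}-universality of $\EM{\C}{\cmdfun}$ yields a unique functor $\fun{X'}{\B}{\EM{\C}{\cmdfun}}$, $b\mapsto\ple{Xb,\xi_b}$, with $\Forget X'=X$ and $\unita X'=\xi$. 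On fibres, the key observation is that the composite $\ple{\cmdfun,\cmdlift}\ple{X,\hat x}=\ple{\cmdfun X,(\cmdlift X\op)\cdot\hat x}$ makes the 2-arrow condition for $\xi$ read, for each object $b$ of $\B$,
\[\hat x_b\order\DReIdx{\Doc}{\xi_b}\circ\cmdlift_{Xb}\circ\hat x_b,\]
so that every $\hat x_b(\beta)\in\Doc(Xb)$ satisfies $\hat x_b(\beta)\order\DReIdx{\Doc}{\xi_b}(\cmdlift_{Xb}(\hat x_b(\beta)))$. This is exactly the membership condition defining $\EM{\Doc}{\cmd}\ple{Xb,\xi_b}=\EM{\Doc}{\cmd}(X'b)$. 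Therefore $\hat x$ corestricts to a natural transformation $\TNat{\hat x'}{\bDoc}{\EM{\Doc}{\cmd}(X')\op}$—naturality being inherited because reindexing in $\EM{\Doc}{\cmd}$ is the restriction of that in $\Doc$—and $\ple{X',\hat x'}$ is the desired 1-arrow, with $\ple{\Forget,\ForgetLift}\ple{X',\hat x'}=\ple{X,\hat x}$ and $\unita\ple{X',\hat x'}=\xi$.

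Uniqueness, together with the analogous clause for 2-arrows, then follows from the already-noted facts that $\Forget$ is faithful and each $\ForgetLift_{\ple{C,c}}$ is injective: the base datum of any competing factorisation is forced by \Ct{Cat}-universality, and the fibre datum is then forced because $\ForgetLift$ is a componentwise inclusion. For a 2-arrow $\gamma$ between two such pairs satisfying (\ref{2cohe}), its base component lifts uniquely to $X'\Rightarrow Y'$ landing in $\EM{\C}{\cmdfun}$ by \Ct{Cat}-universality, and it automatically remains a 2-arrow of \IdxPos since the defining inequality is tested in the ambient doctrine $\Doc$, whose restricted order $\EM{\Doc}{\cmd}$ carries. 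The one genuinely conceptual step—and thus the main obstacle—is recognising that the 2-arrow axiom of \IdxPos for $\xi$ matches, component by component, the inequality carving out the fixpoint suborders $\EM{\Doc}{\cmd}\ple{C,c}$; once this identification is made, every remaining verification is routine bookkeeping mediated by the forgetful 2-functor to \Ct{Cat} and by the faithfulness and injectivity of the universal cone.
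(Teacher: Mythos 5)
Your proof is correct and takes essentially the same approach as the paper's: both reduce the base-level part of the universal property to the standard coalgebra correspondence in \Ct{Cat} (the translation of (\ref{1cohe}) into coalgebra structures and homomorphisms) and rest on the key identification of the 2-arrow inequality for $\xi$ with the membership condition defining the fibres $\EM{\Doc}{\cmd}\ple{C,c}$, so that the fibre datum corestricts. The differences are only presentational: you verify the cone coherences and spell out uniqueness via faithfulness of $\Forget$ and injectivity of the components of $\ForgetLift$, whereas the paper packages existence and uniqueness together as a bijective correspondence between the data of the 2-problem and 1-arrows (resp.\ 2-arrows) into $\EM{\Doc}{\cmd}$.
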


\begin{proof}
We begin the proof analysing the data for the 2-problem in \refToDefItem{adjcom}{3}: one
has an arbitrary doctrine \fun{\aDoc}{\D\op}{\Pos} and a diagram of 1-arrows and
2-arrows in \IdxPos
\begin{equation}\label{fcc}
\vcenter{\xymatrix@C=5em@R=1em{&\Doc\ar[dd]^-{\ple{\cmdfun,\cmdlift}}_(.6){}="a"\\
\aDoc\ar[ru]^{\ple{X,x}}
\ar[rd]_-{\ple{X,x}}^{}="b"\\  
\ar@{=>}"b";"a"^-{\xi}&\Doc}}
\end{equation}
where the pair \ple{\ple{X,x},\xi} satisfies the two commutativity conditions in
(\ref{1cohe}). These translate precisely in the commutative diagrams of natural
transformations
\begin{equation}\label{transl1}
\vcenter{\xymatrix{X\arnta[r]^-{\xi}\arnta[d]_-{\xi}&
\cmdfun X\arntb[d]^-{\comul X}\\
\cmdfun X\arnta[r]_-{\cmdfun\xi}&\cmdfun\cmdfun X}}\qquad
\vcenter{\xymatrix{X\arnta[r]^-{\xi}\arnta[rd]_-{\id{X}}&
\cmdfun X\arntb[d]^-{\cun X}\\&X}}
\end{equation}
while the condition on the 2-arrow in (\ref{fcc}) requires that the natural transformation
\TNat{\xi}{X}{\cmdfun X} is such that, for every object $D$ in \D and $\beta\in \aDoc(D)$, we have 
\begin{equation}\label{transl2}
x_D(\beta)\order \DReIdx{\Doc}{(\xi_D)}(\cmdlift_{X(D)}(x_D(\beta))).
\end{equation}
In turn, the commutativity of the two diagrams (\ref{transl1}) is equivalent to requiring
that, for every object $D$ in \D, there is a structure of coalgebra \ple{X(D),\xi_D} for the
comonad \ple{\cmdfun,\comul,\cun} on the object $X(D)$ in the category \C, and that, for every
arrow \fun{f}{D}{D'} in \D, the arrow \fun{X(f)}{\ple{X(D),\xi_D}}{\ple{X(D'),\xi_{D'}}} is
a homomorphism of coalgebras. At the same 
time, condition (\ref{transl2}) is equivalent to requiring that the monotone function
\fun{x_D}{\aDoc(X(D))}{\Doc(X(D))} factors through
\[\xymatrix@R=.8ex@C=2em{\aDoc(X(D))\ar[rrd]_-{x_D}\ar[rrr]^-{x_D}&&&\Doc(X(D))\\
&&\EM{\Doc}{\cmd}\ple{X(D),\xi_D}\ar@{^(->}[ru].}\]
Hence the data for the 2-problem determine precisely a
1-arrow \oneAr{\ple{\overline{\ple{X,\xi}},x}}{Q}{\EM{\Doc}{\cmd}} ensuring uniqueness,
and it is immediate to check that the required diagram commutes.

Similarly, for an arrow \oneAr{\gamma}{\ple{\ple{X,x},\xi}}{\ple{\ple{Y,y},\upsilon}} of the 2-problem, that is, a 2-arrow \twoAr{\gamma}{\ple{X,x}}{\ple{Y,y}} in \IdxPos, the commutative
diagram (\ref{2cohe}) determines precisely a natural transformation
\TNat{\overline{\gamma}}{\overline{\ple{X,\xi}}}{\overline{\ple{Y,\upsilon}}}; the
inequality encoded in the 2-arrow \twoAr{\gamma}{\ple{X,x}}{\ple{Y,y}} in \IdxPos is the
same as that encoded in the 2-arrow
\twoAr{\overline{\gamma}}{\ple{\overline{\ple{X,\xi}},x}}
{\ple{\overline{\ple{Y,\upsilon}},y}} in \IdxPos.
\end{proof}

\begin{cor}\label{cor:cmd-to-adj}
Let \fun{\Doc}{\C\op}{\Pos} be a doctrine and $\cmd=\ple{\cmdfun,\cmdlift,\comul,\cun}$ be
a comonad on $\Doc$. Then there is an adjunction
$\adj^\cmd=
\ple{\EM{\Doc}{\cmd},\Doc,\Forget,\ForgetLift,\hat{\cmdfun},\cmdlift,\unit^\cmd,\cun}$
between \EM{\Doc}{\cmd} and \Doc.   
\end{cor}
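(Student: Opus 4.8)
The plan is to read the adjunction off the Eilenberg-Moore object produced in \refToThm{em-obj}, using the general passage from Eilenberg-Moore objects to Eilenberg-Moore adjunctions recorded by the 2-functor $\EMAFun$ in diagram~(\ref{maind}). By that theorem the forgetful 1-arrow $\oneAr{\ple{\Forget,\ForgetLift}}{\EM{\Doc}{\cmd}}{\Doc}$, equipped with its universal 2-arrow $\unita$, is an Eilenberg-Moore object for $\cmd$; since $\cmd$ is a comonad, $\ple{\Forget,\ForgetLift}$ plays the role of the left adjoint \LAdj. What remains is to exhibit the cofree right adjoint together with the unit and counit, and then to confirm, via \refToProp{idxpos-adj}, that the resulting octuple is an adjunction in \IdxPos.

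For the right adjoint I would invoke the universal property of \refToThm{em-obj} for the 2-problem data $\ple{\ple{\cmdfun,\cmdlift},\comul}$ over $B\colon=\Doc$, that is,
\[\vcenter{\xymatrix@C=5em@R=1em{&\Doc\ar[dd]^-{\ple{\cmdfun,\cmdlift}}_(.6){}="a"\\
\Doc\ar[ru]^{\ple{\cmdfun,\cmdlift}}
\ar[rd]_-{\ple{\cmdfun,\cmdlift}}^{}="b"\\
\ar@{=>}"b";"a"^-{\comul}&\Doc.}}\]
The two coherence conditions in (\ref{1cohe}) for this pair are exactly the coassociativity and counit laws (\ref{dcom}) of the comonad $\cmd$, which hold by \refToProp{comonad}; hence the universal property yields a unique 1-arrow $\oneAr{\ple{\hat{\cmdfun},\cmdlift}}{\Doc}{\EM{\Doc}{\cmd}}$ with $\ple{\Forget,\ForgetLift}\circ\ple{\hat{\cmdfun},\cmdlift}=\ple{\cmdfun,\cmdlift}$ and $\unita\,\ple{\hat{\cmdfun},\cmdlift}=\comul$. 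Its base component is the cofree-coalgebra functor $\fun{\hat{\cmdfun}}{\C}{\EM{\C}{\cmdfun}}$, $C\mapsto\ple{\cmdfun C,\comul_C}$, and its fibre component is the corestriction of $\cmdlift_C$ to $\EM{\Doc}{\cmd}\ple{\cmdfun C,\comul_C}$. This corestriction is legitimate precisely by the inequality $\cmdlift_X\order\DReIdx{\Doc}{\comul_X}\circ\cmdlift_{\cmdfun X}\circ\cmdlift_X$ of \refToPropItem{comonad}{3}, which is the single genuinely content-bearing check; the rest is bookkeeping once \refToThm{em-obj} is in hand.

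It then remains to supply the unit and counit and to verify the triangular identities. The composite $lr=\ple{\Forget,\ForgetLift}\circ\ple{\hat{\cmdfun},\cmdlift}$ is exactly $\ple{\cmdfun,\cmdlift}$ (the inclusion $\ForgetLift$ undoes the corestriction of $\cmdlift$), so the counit slot is filled by \cun, which is a 2-arrow $\twoAr{\cun}{\ple{\cmdfun,\cmdlift}}{\ple{\Id{\C},\id{}}}$ in \IdxPos by \refToPropItem{comonad}{3}. The unit $\unit^\cmd$ has base component the coalgebra map $c$ at each $\ple{C,c}$, regarded as a morphism $\ple{C,c}\to\ple{\cmdfun C,\comul_C}$; its 2-cell condition in \IdxPos is precisely the inequality $\alpha\order\DReIdx{\Doc}{c}(\cmdlift_C(\alpha))$ defining $\EM{\Doc}{\cmd}\ple{C,c}$, i.e.\ the one already verified for $\unita$ (indeed $\unita=\ple{\Forget,\ForgetLift}\,\unit^\cmd$). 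Finally, by \refToProp{idxpos-adj} the octuple is an adjunction in \IdxPos: its base datum is the classical Eilenberg-Moore adjunction $\Forget\dashv\hat{\cmdfun}$ of the comonad $\ple{\cmdfun,\comul,\cun}$ on \C, the two 1-arrows and the two (co)unit 2-arrows have just been exhibited, and the triangular identities reduce to those of the base adjunction because a 2-arrow of \IdxPos is uniquely determined by its underlying natural transformation in the base, so they hold automatically.
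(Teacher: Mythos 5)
Your proof is correct and takes essentially the same route as the paper: both read the adjunction off the Eilenberg-Moore object of \refToThm{em-obj}, with the cofree 1-arrow (base the free-coalgebra functor $\hat{\cmdfun}$, fibre the corestriction of \cmdlift) and the unit $\unit^{\cmd}_{\ple{C,c}}=c$ made explicit, and the triangle identities holding because 2-arrows in \IdxPos are determined by their underlying natural transformations. The only difference is that where the paper delegates the general passage from Eilenberg-Moore objects to Eilenberg-Moore adjunctions to \cite{Street72}, you unfold that argument directly, applying the universal property to the pair $\ple{\ple{\cmdfun,\cmdlift},\comul}$ and assembling the octuple via \refToProp{idxpos-adj}.
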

\begin{proof}
It follows from \refToThm{em-obj} and general results in \cite{Street72}. But we make
explicit each component of the adjunction as is obtained from the general case.
Among the data determining the adjunction, only two may need to be described:
the functor \fun{\hat{\cmdfun}}{\C}{\EM{\C}{\cmdfun}} is the free coalgebra functor and
gives, for an object $X$ in \C, the free coalgebra 
$\hat{\cmdfun}X=\ple{\cmdfun X,\comul_X}$. The natural transformation is the canonical
embedding of a coalgebra into the free coalgebra
\TNat{\unit^\cmd}{\Id{\EM{\C}{\cmdfun}}}{\hat{\cmdfun}\Forget} defined as 
$\unit^\cmd_{\ple{X,c}}=c$.
\end{proof}

In fact, in the general 2-adjunction between comonads and adjunctions in a 2-category \K 
when \K admits the Eilenberg-Moore construction, as in diagram (\ref{maind}), we know that
the Eilenberg-Moore construction gives the right 2-adjoint from the 2-category \Cmd{\K} of
comonads in \K.
So we briefly collect the  
data for the 2-category \Cmd{\IdxPos} in order to apply that result in the present
situation. The 2-category \Cmd{\IdxPos} has
\begin{description}
\Item [objects] which are pairs \ple{\Doc,\cmd} where \Doc is a doctrine and \cmd is a
comonad on \Doc;
\Item [1-arrows] from \ple{\Doc,\cmd} to \ple{\aDoc,\acmd}, with
$\cmd=\ple{\cmdfun,\cmdlift,\comul^\cmd,\cun^\cmd}$ and
$\acmd=\ple{\acmdfun,\acmdlift,\comul^\acmd,\cun^\acmd}$, consist of a 1-arrow 
\oneAr{\ple{F,f}}{\Doc}{\aDoc} and a 2-arrow
\twoAr{\theta}{\ple{F\cmdfun,(f\cmdfun\op)\cmdlift}}{\ple{\acmdfun F,(\acmdlift F\op)f}}
in \IdxPos such that the following diagrams of functors and natural transformations commute:
\[\xymatrix{
F\cmdfun \ar[r]^{\theta} \ar[dr]_{F\cun^\cmd} & \acmdfun F \ar[d]^{\cun^\acmd F} \\
& F 
}\qquad
\xymatrix{
F \cmdfun \ar[d]_{F\comul^\cmd} \ar[rr]^{\theta} && \acmdfun F \ar[d]^{\comul^\acmd F} \\
F \cmdfun^2 \ar[r]_{\theta \cmdfun} & \acmdfun F \cmdfun \ar[r]_{\acmdfun \theta} & \acmdfun^2 F
}\]
\Item [2-arrows] from \ple{\ple{F,f},\theta} to \ple{\ple{G,g},\zeta}, which are 1-arrows
from \ple{\Doc,\cmd} to \ple{\aDoc,\acmd}, with
$\cmd=\ple{\cmdfun,\cmdlift,\comul^\cmd,\cun^\cmd}$ and
$\acmd=\ple{\acmdfun,\acmdlift,\comul^\acmd,\cun^\acmd}$, consist of a 2-arrow
\twoAr{\alpha}{\ple{F,f}}{\ple{G,g}} such that the following diagram of functors and
natural transformations commutes
\[ \xymatrix{
F\cmdfun \ar[r]^{\alpha \cmdfun} \ar[d]_{\theta} & G\cmdfun \ar[d]^{\zeta} \\
\acmdfun F \ar[r]_{\acmdfun \alpha} & \acmdfun G
} \]
\end{description}

The instance of diagram (\ref{maind}) which we have been addressing is the
following
\[ \xymatrix{
\IdxPos \ar@/^8pt/[rr]^{\IncFun} \ar@{}[rr]|{\bot}  &&
\Cmd{\IdxPos} \ar@/^8pt/[ll]^{\EMFun} \ar@/_8pt/[rr]_{\EMAFun} \ar@{}[rr]|{\bot} &&
\Adj{\IdxPos} \ar@/_8pt/[ll]_{\CmdFun} 
} \]
Since by \refToCor{adj-to-modality} every adjunction between doctrines induces \acomonadic
operator, via \EMAFun one obtains \acomonadic operator also from a comonad.

\begin{prop}\label{prop:cmd-to-modality} 
Let $\fun{\Doc}{\C\op}{\Pos}$ be a doctrine and $\cmd=\ple{\cmdfun,\cmdlift,\comul,\cun}$
a comonad on $\Doc$. Then, the natural transformation
$\TNat{\necmod^\cmd}{\Doc\Forget}{\Doc\Forget}$,
defined, for 
each coalgebra \ple{X,c} in $\EM{\C}{\cmdfun}$, by
$\necmod^\cmd_{\ple{X,c}}=\DReIdx{\Doc}{c}\circ\cmdlift_X$,
is \acomonadic operator on \fun{\Doc{\Forget}\op}{\EM{\C}{\cmdfun}}{\Pos}.
\end{prop}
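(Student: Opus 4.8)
The plan is to verify directly that $\necmod^\cmd$ satisfies the three requirements: that it is a natural transformation of $\Doc\Forget\op$ into itself, and that each component obeys the two inequalities of \refToDef{modality}. Conceptually one could instead argue that this operator is already forced upon us by the earlier machinery: \refToCor{cmd-to-adj} turns the comonad $\cmd$ into the adjunction $\adj^\cmd$ between \EM{\Doc}{\cmd} and \Doc, and feeding $\adj^\cmd$ into \refToCor{adj-to-modality} produces \acomonadic operator on $\Doc\Forget\op$; unwinding its defining formula $\ForgetLift\cdot(\EM{\Doc}{\cmd}(\unit^\cmd)\op)\cdot(\cmdlift\Forget\op)$ on a coalgebra \ple{X,c}, where $\ForgetLift_{\ple{X,c}}$ is the inclusion $\EM{\Doc}{\cmd}\ple{X,c}\hookrightarrow\Doc X$, $\unit^\cmd_{\ple{X,c}}=c$, and $(\cmdlift\Forget\op)_{\ple{X,c}}=\cmdlift_X$, the composite collapses to $\DReIdx{\Doc}{c}\circ\cmdlift_X$. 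I would present the direct verification as the main line and record the identification with \refToCor{adj-to-modality} as a remark, since the bookkeeping needed to match the two formulas is about as long as the hands-on computation.

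For naturality, take a homomorphism of coalgebras \fun{f}{\ple{X,c}}{\ple{X',c'}}; reindexing along it in $\Doc\Forget\op$ is just $\DReIdx{\Doc}{f}$. Using $c'\circ f=\cmdfun(f)\circ c$ (the homomorphism condition) together with the naturality of $\cmdlift$, namely $\DReIdx{\Doc}{\cmdfun(f)}\circ\cmdlift_{X'}=\cmdlift_X\circ\DReIdx{\Doc}{f}$, one computes $\DReIdx{\Doc}{f}\circ\DReIdx{\Doc}{c'}\circ\cmdlift_{X'}=\DReIdx{\Doc}{c}\circ\cmdlift_X\circ\DReIdx{\Doc}{f}$, which is exactly the naturality square. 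Condition (\ref{def:modality:1}) is the inequality already noted before the statement: from the comonad law $\cmdlift_X\order\DReIdx{\Doc}{\cun_X}$ in \refToProp{comonad} and the counit law $\cun_X\circ c=\id{X}$ of the coalgebra, monotonicity of $\DReIdx{\Doc}{c}$ gives $\DReIdx{\Doc}{c}\circ\cmdlift_X\order\DReIdx{\Doc}{c}\circ\DReIdx{\Doc}{\cun_X}=\DReIdx{\Doc}{(\cun_X\circ c)}=\id{\Doc X}$.

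The real content is condition (\ref{def:modality:2}), which is where I expect the only genuine bookkeeping. Writing out $\necmod^\cmd_{\ple{X,c}}\circ\necmod^\cmd_{\ple{X,c}}=\DReIdx{\Doc}{c}\circ\cmdlift_X\circ\DReIdx{\Doc}{c}\circ\cmdlift_X$, I would rewrite the middle block $\cmdlift_X\circ\DReIdx{\Doc}{c}$ by naturality of $\cmdlift$ as $\DReIdx{\Doc}{\cmdfun(c)}\circ\cmdlift_{\cmdfun X}$, then use functoriality together with the coassociativity law $\comul_X\circ c=\cmdfun(c)\circ c$ of the coalgebra to recognise $\DReIdx{\Doc}{c}\circ\DReIdx{\Doc}{\cmdfun(c)}=\DReIdx{\Doc}{(\cmdfun(c)\circ c)}=\DReIdx{\Doc}{(\comul_X\circ c)}=\DReIdx{\Doc}{c}\circ\DReIdx{\Doc}{\comul_X}$. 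This turns the square into $\DReIdx{\Doc}{c}\circ\DReIdx{\Doc}{\comul_X}\circ\cmdlift_{\cmdfun X}\circ\cmdlift_X$.

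Finally the comonad comultiplication law $\cmdlift_X\order\DReIdx{\Doc}{\comul_X}\circ\cmdlift_{\cmdfun X}\circ\cmdlift_X$ from \refToProp{comonad}, postcomposed with the monotone map $\DReIdx{\Doc}{c}$, yields $\necmod^\cmd_{\ple{X,c}}=\DReIdx{\Doc}{c}\circ\cmdlift_X\order\DReIdx{\Doc}{c}\circ\DReIdx{\Doc}{\comul_X}\circ\cmdlift_{\cmdfun X}\circ\cmdlift_X=\necmod^\cmd_{\ple{X,c}}\circ\necmod^\cmd_{\ple{X,c}}$, as required. The one point to keep straight throughout is the contravariance of $\Doc$, so that the coalgebra identities between arrows of \C are read in the correct order once $\Doc$ is applied; beyond this I expect no deeper difficulty, and the two inequalities together then give the idempotency $\necmod^\cmd=\necmod^\cmd\circ\necmod^\cmd$ anticipated in the preceding remark.
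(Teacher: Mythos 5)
Your argument is correct, but your main line is genuinely different from the paper's proof, which is pure citation: the paper invokes \refToCor{cmd-to-adj} to produce the adjunction $\adj^\cmd$ between $\EM{\Doc}{\cmd}$ and \Doc, applies \refToCor{adj-to-modality} to that adjunction, and then performs exactly the unwinding you relegate to a remark ($\Forget\ple{X,c}=X$, $\unit^\cmd_{\ple{X,c}}=c$, $\DReIdx{\EM{\Doc}{\cmd}}{c}=\DReIdx{\Doc}{c}$, and $\ForgetLift$ an inclusion), so that the general formula collapses to $\DReIdx{\Doc}{c}\circ\cmdlift_X$. Your direct verification instead bypasses the Eilenberg--Moore machinery entirely: it uses only the explicit comonad data of \refToProp{comonad} (naturality of \cmdlift together with the inequalities $\cmdlift_X\order\DReIdx{\Doc}{\cun_X}$ and $\cmdlift_X\order\DReIdx{\Doc}{\comul_X}\circ\cmdlift_{\cmdfun X}\circ\cmdlift_X$) and the two coalgebra laws for $c$, and all three computations---naturality via the homomorphism law, condition (i) of \refToDef{modality} via the counit laws, condition (ii) via naturality of \cmdlift plus coassociativity of the coalgebra---are carried out correctly, with the contravariance of \Doc handled properly throughout. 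What the paper's route buys is brevity and the structural placement of $\necmod^\cmd$ as the composite 2-functor \CMFun exploited in \refToSect{picture}; what yours buys is a self-contained argument that does not presuppose \refToThm{em-obj} or \refToCor{cmd-to-adj} and that makes visible precisely which comonad or coalgebra identity drives each inequality. Since your opening remark reproduces the paper's proof essentially verbatim, you in fact cover both routes.
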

\begin{proof}
By \refToCor{cmd-to-adj}, \ple{\Forget,\ForgetLift,\hat{\cmdfun},\cmdlift,\unit^\cmd,\cun}
is an adjunction between \EM{\Doc}{\cmd} and \Doc. 
By \refToCor{adj-to-modality}, $\necmod^\cmd= \ForgetLift \cdot (\EM{\Doc}{\cmd}\unit^\cmd)
\cdot (\cmdlift\Forget)$ is \acomonadic operator on
$\fun{\Doc{\Forget}\op}{\EM{\C}{\cmdfun}\op}{\Pos}$,
but, for each coalgebra \ple{X,c} in $\EM{\C}{\cmdfun}$, $\unit^\cmd_{\ple{X,c}} = c$ and
$\Forget\ple{X,c} = X$, $\DReIdx{\EM{\Doc}{\cmd}}{c} = \DReIdx{\Doc}{c}$ by definition, and
$\ForgetLift$ is an inclusion.
\end{proof}

\begin{exa}\label{ex:preshcont}
An interesting case of \refToProp{cmd-to-modality} is that of toposes of presheaves as
models of first order modal logic.
We have already seen in \refToEx{presh} how one obtains \acomonadic operator
\[\xymatrix{\Psh{\C}\op\ar[rr]^{\Sub[\Psh{\C_0}]{}\LAdj\op}\ar[rd]_{\LAdj\op}&&\Pos\\
&\Psh{\C_0}\ar[ru]_{\Sub[\Psh{\C_0}]{}}}\]
on the category of presheaves \Psh{\C} from the adjunction which is the geometric morphism
\begin{equation}\label{geomor}
\xymatrix@=4em{\Psh{\C_0}\ar@<.5ex>@/^/[r]^{\RAdj}\ar@{{}{}}[r]|-{\top}
&\Psh{\C}\ar@<.5ex>@/^/[l]^{\blank\circ i\op}}
\end{equation}
where $\C_0$ denotes the discrete category of the objects of \C and \fun{i}{\C_0}{\C} is
the inclusion functor. But the category of presheaves is exactly the category of
coalgebras for the comonad determined by the adjunction (\ref{geomor}), see
\cite{JohnstoneP:skeeli}; so \refToProp{cmd-to-modality} applies, and the modal
operator obtained on a presheaf model is obtained directly from the subobject doctrine on
\Psh{\C_0}]{} and the geometric morphism that determines the presheaves as coalgebras.
\end{exa}

\section{The global picture}\label{sect:picture}

\refToProp{2-fun-adj-modality} produces a construction of \acomonadic operator from
adjunctions as a 2-functor $\fun{\AMFun}{\Adj{\IdxPos}}{\NecIdxPos}$. And
\refToProp{cmd-to-modality} describes the action of the composition \CMFun in the diagram
\[\xymatrix@=5em{\Cmd{\IdxPos}\ar@<-.5ex>@/_/[r]_-{\EMAFun}\ar@{{}{}}[r]|-{\bot}
\ar[rd]_-{\CMFun}
&\Adj{\IdxPos}\ar[d]^-{\AMFun}\ar@<-.5ex>@/_/[l]_-{\CmdFun}\\
&\NecIdxPos.}\]
The goal of this section is to complete the above diagram, by showing that $\AMFun$ is
part of a local adjunction, see \cite{BettiP88}. Hence so is $\CMFun$.

We start by comparing the 2-functor \AMFun to the composite $\CMFun \circ \CmdFun$, both constructing a doctrine with interior operator from an adjunction in \IdxPos. 
They do not coincide, but can be canonically compared by a 2-natural transformation. 
Recall that \CMFun maps a comonad \ple{\Doc,\cmd}, for
$\cmd=\ple{\cmdfun,\cmdlift,\comul^\cmd\,\cun^\cmd}$,
to the doctrine with \acomonadic operator \ple{\Doc(\Forget)\op,\necmod^\cmd} where
$\necmod^\cmd_{\ple{X,c}}=\DReIdx{\Doc}{c}\cdot\cmdlift$.

Since \AMFun is a 2-functor, its action on the unit of the 2-adjunction $\CmdFun\dashv\EMAFun$
produces a natural comparison $\AMFun(\adj)\to\CMFun(\CmdFun(\adj))$ for
$\adj=\ple{\Doc,\aDoc,\LAdj,\LAdjLift,\RAdj,\RAdjLift,\unit,\counit}$
an adjunction in \IdxPos. 

Indeed, let $\cmd\colon=\CmdFun(\adj) = 
\ple{\LAdj\RAdj,(\LAdjLift\RAdj\op)\RAdjLift,\LAdj\unit\RAdj,\counit}$ be the induced
comonad on $\aDoc$. The component of the unit of the 2-adjunction on $\adj$ is given
by the 1-arrow \oneAr{\ple{K,k,\Id{},\id{},\id{}}}{\adj}{\EMAFun(\cmd)}, where 
\oneAr{\ple{K,k}}{\Doc}{\EM{\aDoc}{\cmd}} is the comparison 1-arrow given by the
Eilenberg-Moore construction. The 1-arrow  \ple{K,k} is obtained by the universal property
of $\EM{\aDoc}{\cmd}$ applied to the following diagram: 
\[\vcenter{\xymatrix@C=4em@R=1ex{
&\aDoc\ar[dd]^-{\ple{\LAdj\RAdj,(\LAdjLift\RAdj\op)\RAdjLift}}_(.55){}="t"\\
\Doc\ar[ru]^-{\ple{\LAdj,\LAdjLift}}\ar[rd]_-{\ple{\LAdj,\LAdjLift}}^(.68){}="s"\\
\ar@{=>}"s";"t"^-{\LAdj\unit}&\aDoc}}\]
More explicitly,  \ple{K,k} is defined as follows: 
$KX \colon= \ple{\LAdj X,\LAdj\unit_X}$, for each object $X$ in the base category of $\Doc$, 
$Kf \colon= \LAdj f$, for each arrow in the base category of $\Doc$, and $k = \LAdjLift$. 
This is well-defined thanks to the following chain of inequalities: 
\[\LAdjLift_X\order\LAdjLift_X\circ\DReIdx{\Doc}{\unit_X}\circ\RAdjLift_{\LAdj X}\circ\LAdjLift_X 
=\DReIdx{\aDoc}{(\LAdj\unit_X)}\circ
((\LAdjLift\RAdj\op)\cdot\RAdjLift)_{\LAdj X}\circ\LAdjLift_X.\] 

\begin{prop}\label{prop:modality-comparison}
Let $\adj=\ple{\Doc,\aDoc,\LAdj,\LAdjLift,\RAdj,\RAdjLift,\unit,\counit}$ be an adjunction
in \IdxPos, and consider
$\cmd\colon=\ple{\LAdj\RAdj,(\LAdjLift\RAdj\op)\RAdjLift,\LAdj\unit\RAdj,\counit}$ 
the associated comonad on the doctrine $\aDoc$.
Let $\ple{K,k}$ be the comparison 1-arrow. 
Then,
\oneAr{\ple{K,\id{}}}{\ple{\aDoc\LAdj\op,\necmod^\adj}}{\ple{\aDoc(\Forget)\op,\necmod^\cmd}}
is a 1-arrow in \IdxPos and $\necmod^\adj=\necmod^\cmd K$. 
\end{prop}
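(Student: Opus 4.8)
The plan is to verify the two assertions separately, both reducing to definitional unwinding together with a single instance of the naturality of $\LAdjLift$.

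First I would record that $\ple{K,\id{}}$ is a legitimate 1-arrow in \IdxPos. The key point is that $\Forget K=\LAdj$ as functors $\C\to\D$: on objects $\Forget(KX)=\Forget\ple{\LAdj X,\LAdj\unit_X}=\LAdj X$, and on arrows $\Forget(Kf)=\LAdj f$. Hence $\aDoc(\Forget)\op K\op=\aDoc(\Forget K)\op=\aDoc\LAdj\op$ as functors $\C\op\to\Pos$, so the identity natural transformation is an admissible second component and $\oneAr{\ple{K,\id{}}}{\aDoc\LAdj\op}{\aDoc(\Forget)\op}$ is a 1-arrow in \IdxPos.

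The heart of the argument is the equality $\necmod^\adj=\necmod^\cmd K$, which I would check component-wise. Fixing an object $X$ in \C, \refToCor{adj-to-modality} gives
\[\necmod^\adj_X=\LAdjLift_X\circ\DReIdx{\Doc}{\unit_X}\circ\RAdjLift_{\LAdj X}.\]
On the other side, $KX=\ple{\LAdj X,\LAdj\unit_X}$, and the lift part of $\cmd=\CmdFun(\adj)$ is $(\LAdjLift\RAdj\op)\RAdjLift$, with component at $Y$ equal to $\LAdjLift_{\RAdj Y}\circ\RAdjLift_Y$; hence \refToProp{cmd-to-modality} yields
\[\necmod^\cmd_{KX}=\DReIdx{\aDoc}{(\LAdj\unit_X)}\circ\LAdjLift_{\RAdj\LAdj X}\circ\RAdjLift_{\LAdj X}.\]
Both expressions end in $\RAdjLift_{\LAdj X}$, so it suffices to establish
\[\LAdjLift_X\circ\DReIdx{\Doc}{\unit_X}=\DReIdx{\aDoc}{(\LAdj\unit_X)}\circ\LAdjLift_{\RAdj\LAdj X}.\]
This is precisely the naturality square of $\TNat{\LAdjLift}{\Doc}{\aDoc\LAdj\op}$ applied to the arrow $\fun{\unit_X}{X}{\RAdj\LAdj X}$ in \C, which settles the equality.

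Finally, the condition for $\ple{K,\id{}}$ to be a 1-arrow in \NecIdxPos reduces, since its second component is the identity, to $\necmod^\adj_X\order\necmod^\cmd_{KX}$ for every $X$; by the equality just proved this holds with equality, so the interior operator is strictly preserved. I expect the only real care to lie in the bookkeeping of variances and indices---in particular, correctly expanding $\cmdlift$ for $\cmd=\CmdFun(\adj)$ and evaluating $\necmod^\cmd$ at the coalgebra $\ple{\LAdj X,\LAdj\unit_X}$---rather than in any genuine difficulty.
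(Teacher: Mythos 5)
Your proof is correct and is essentially the paper's own argument: the paper's one-line proof is precisely the chain $\necmod^\adj_X = \LAdjLift_X\circ\DReIdx{\Doc}{\unit_X}\circ\RAdjLift_{\LAdj X} = \DReIdx{\aDoc}{(\LAdj\unit_X)}\circ\LAdjLift_{\RAdj\LAdj X}\circ\RAdjLift_{\LAdj X} = \necmod^\cmd_{KX}$, whose middle step is exactly the naturality square of $\LAdjLift$ at $\unit_X$ that you invoke. Your additional bookkeeping---checking $\Forget K=\LAdj$ so that $\id{}$ is an admissible second component, and observing that the \NecIdxPos condition holds with equality---is what the paper compresses into ``It is immediate''.
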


\begin{proof}
It is immediate since, for each object $X$, 
$\necmod^\adj_X = \LAdjLift_X \DReIdx{\Doc}{\unit_X} \RAdjLift_{\LAdj X} =
\DReIdx{\aDoc\LAdj}{\unit_X} \LAdjLift_{\RAdj\LAdj X} \RAdjLift_{\LAdj X} =  
\necmod^\cmd_{K X}$.
\end{proof}

Finally, let us note that this comparison 1-arrow is a component of a 2-natural
transformation, obtained by postcomposition of the unit of the 2-adjunction
$\CmdFun\dashv\EMAFun$ with the 2-functor $\AMFun$.

In order to show that \AMFun is part of a local adjunction, 
We start by constructing a comonad from an object \ple{\Doc,\necmod} in \NecIdxPos.

\begin{prop}\label{prop:modality-to-cmd}
Let $\fun{\Doc}{\C\op}{\Pos}$ be a doctrine and $\TNat{\necmod}{\Doc}{\Doc}$ be
\acomonadic operator on $\Doc$.
Then, \ple{\Id{\C},\necmod,\id{},\id{}} is a comonad on $\Doc$. 
\end{prop}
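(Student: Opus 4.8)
The plan is to apply the characterisation of comonads in \IdxPos given by \refToProp{comonad}: a comonad on \Doc is a quadruple $\ple{\cmdfun,\cmdlift,\comul,\cun}$ satisfying three conditions, and here I instantiate it with $\cmdfun=\Id{\C}$, $\cmdlift=\necmod$, and $\comul=\cun=\id{\Id{\C}}$, the identity 2-cell on the identity functor. The entire argument then consists in checking that, under this instantiation, each structural requirement collapses to one of the two inequalities of \refToDef{modality}; it is all bookkeeping.

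First I would dispatch condition (i), that $\ple{\Id{\C},\id{},\id{}}$ be a comonad on the base category \C. This is just the identity comonad on \C, and the two diagrams in (\ref{dcom}) commute automatically since every 2-cell occurring in them is an identity. Condition (ii) asks that $\oneAr{\ple{\Id{\C},\necmod}}{\Doc}{\Doc}$ be a 1-arrow in \IdxPos; because $\Doc\Id{\C}\op=\Doc$, its second component is required to be merely a natural transformation $\TNat{\necmod}{\Doc}{\Doc}$, which holds by the very definition of \acomonadic operator.

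The two modality axioms enter only in condition (iii). Recall that a 2-arrow $\twoAr{\theta}{\ple{F,f}}{\ple{F',f'}}$ in \IdxPos is a natural transformation $\theta$ with $f_X\order_X(\DReIdx{\Doc}{\theta\op})_X\circ f'_X$ for every $X$. For the comultiplication, I note that with $\cmdfun=\Id{\C}$ the target $\ple{\cmdfun^2,(\cmdlift\cmdfun\op)\cmdlift}$ becomes $\ple{\Id{\C},\necmod\cdot\necmod}$, whose second component has value $\necmod_X\circ\necmod_X$ at $X$; taking $\theta=\comul=\id{\Id{\C}}$ makes $(\DReIdx{\Doc}{\theta\op})_X$ the identity of $\Doc X$, so the 2-arrow inequality reads $\necmod_X\order_X\necmod_X\circ\necmod_X$, which is exactly \refToDefItem{modality}{2}. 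For the counit the target is $\ple{\Id{\C},\id{}}$ and $\theta=\cun=\id{\Id{\C}}$, so the same inequality reads $\necmod_X\order_X\id{\Doc X}$, which is \refToDefItem{modality}{1}.

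I expect no real obstacle; the only point needing a little care is the composition bookkeeping in condition (iii)---verifying that whiskering $\necmod$ with $\Id{\C}\op$ returns $\necmod$, and that the identity 2-cell reindexes to the identity---after which the two 2-arrow inequalities coincide verbatim with the two axioms defining \acomonadic operator.
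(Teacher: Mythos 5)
Your proposal is correct and follows essentially the same route as the paper: the paper's proof likewise reduces everything to checking that the identity 2-cells $\twoAr{\id{}}{\ple{\Id{\C},\necmod}}{\ple{\Id{\C},\id{}}}$ and $\twoAr{\id{}}{\ple{\Id{\C},\necmod}}{\ple{\Id{\C},\necmod\cdot\necmod}}$ are well-defined 2-arrows in \IdxPos, which is exactly the pair of inequalities $\necmod_X\order\id{\Doc X}$ and $\necmod_X\order\necmod_X\circ\necmod_X$ from \refToDef{modality}. Your version merely spells out the trivial conditions (i) and (ii) of \refToProp{comonad} that the paper leaves implicit.
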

\begin{proof}
There is only to check that \twoAr{\id{}}{\ple{\Id{\C},\necmod}}{\ple{\Id{\C},\id{}}}
and \twoAr{\id{}}{\ple{\Id{\C},\necmod}}{\ple{\Id{\C},\necmod \cdot \necmod}} are
well-defined 2-arrows. But, for each object $X$ in \C, $\necmod_X\order\id{\Doc X}$ and
$\necmod_X\order \necmod_X \cdot \necmod_X$ hold  by \refToDef{modality}.
\end{proof}
In other words, \refToProp{modality-to-cmd} shows that \acomonadic operator on a
doctrine \Doc is exactly a vertical comonad on it.

We introduce the 2-functor $\fun{\MCFun}{\NecIdxPos}{\Cmd{\IdxPos}}$ by letting, for
\ple{\Doc,\necmod} a doctrine with \comonadic operator,
$\MCFun(\ple{\Doc,\necmod})\colon= \ple{\Doc,\Id{},\necmod,\id{},\id{}}$, which is a
comonad by \refToProp{modality-to-cmd};
for a 1-arrow \oneAr{\ple{F,f}}{\ple{\Doc,\necmod^\Doc}}{\ple{\aDoc,\necmod^\aDoc}}
$\MCFun(\ple{F,f})\colon= \ple{F,f,\id{}}$;
for a 2-arrow \twoAr{\theta}{\ple{F,f}}{\ple{G,g}}
$\MCFun(\theta)\colon= \theta$. 

\begin{prop}\label{prop:2-fun-modality-to-cmd}
With the assignments above, \fun{\MCFun}{\NecIdxPos}{\Cmd{\IdxPos}}  is a 2-functor. 
\end{prop}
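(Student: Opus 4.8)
The plan is to verify, in turn, that the three assignments land in the correct hom-categories of $\Cmd{\IdxPos}$, and then that $\MCFun$ preserves identities and all compositions. Well-definedness on objects is exactly \refToProp{modality-to-cmd}. The whole argument is streamlined by the remark that every comonad in the image of $\MCFun$ is \emph{vertical}: its underlying endofunctor is an identity and both its comultiplication and its counit are identity 2-cells. Moreover the structure 2-cell adjoined to each 1-arrow is itself an identity, and $\MCFun$ is the identity on underlying 2-cells. As a result, all the coherence diagrams defining 1- and 2-arrows of $\Cmd{\IdxPos}$ collapse to trivialities, and the single nontrivial hypothesis is consumed in the step described next.

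First I would treat 1-arrows. For $\oneAr{\ple{F,f}}{\ple{\Doc,\necmod^\Doc}}{\ple{\aDoc,\necmod^\aDoc}}$ in \NecIdxPos, the comonads at source and target are $\ple{\Id{\C},\necmod^\Doc,\id{},\id{}}$ and $\ple{\Id{\D},\necmod^\aDoc,\id{},\id{}}$. Unwinding the definition of a 1-arrow of $\Cmd{\IdxPos}$ for these data, the required structure 2-cell must go from $\ple{F,(f\Id{\C}\op)\cdot\necmod^\Doc}=\ple{F,f\cdot\necmod^\Doc}$ to $\ple{F,(\necmod^\aDoc F\op)\cdot f}$; since we take it to be $\id{F}$ and both 1-arrows share first component $F$, the condition that $\id{F}$ be a 2-arrow of \IdxPos between them reads, for each object $X$,
\[f_X\circ\necmod^\Doc_X\order_X\necmod^\aDoc_{FX}\circ f_X,\]
which is precisely the defining inequality of a 1-arrow in \NecIdxPos. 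This is the one essential point. The two coherence squares of a $\Cmd{\IdxPos}$-1-arrow (compatibility with $\cun^\cmd,\cun^\acmd$ and with $\comul^\cmd,\comul^\acmd$) then hold automatically, since all these 2-cells are identities, the underlying endofunctors are identities, and the adjoined structure 2-cell is $\id{}$; hence every 2-cell appearing in them is an identity. Likewise, for a 2-arrow $\twoAr{\theta}{\ple{F,f}}{\ple{G,g}}$ of \NecIdxPos, which is simply a 2-arrow of \IdxPos, the single coherence square required of a 2-arrow of $\Cmd{\IdxPos}$ reduces, after setting all structure 2-cells and comonad functors to identities, to $\theta=\theta$; so $\MCFun(\theta)=\theta$ is well-defined.

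Finally I would check functoriality. The identity 1-arrow $\ple{\Id{\C},\id{}}$ is sent to $\ple{\Id{\C},\id{},\id{}}$, which is the identity 1-arrow of the object $\MCFun(\ple{\Doc,\necmod})$ in $\Cmd{\IdxPos}$. The composite of two 1-arrows of $\Cmd{\IdxPos}$ has underlying 1-arrow the composite computed in \IdxPos and structure 2-cell obtained by whiskering the two adjoined cells; as both are $\id{}$ and the comonad endofunctors are identities, this composite cell is again $\id{}$, so $\MCFun$ preserves composition of 1-arrows. Because $\MCFun$ leaves the underlying 1- and 2-cells unchanged apart from adjoining identity structure 2-cells, and because the vertical and horizontal compositions of 2-cells in $\Cmd{\IdxPos}$ restrict on this vertical part to those of \IdxPos, the remaining 2-functor axioms—preservation of identity 2-cells, of vertical composition, and of whiskering—reduce to identities already valid in \IdxPos. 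I expect no genuine obstacle: the only content of the proof is the identification in the second paragraph, which exhibits interior operators precisely as vertical comonads.
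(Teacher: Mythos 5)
Your proof is correct and follows essentially the same route as the paper's: the paper likewise observes that the only nontrivial point is well-definedness on 1-arrows, where the defining inequality $f_X\circ\necmod^\Doc_X\order\necmod^\aDoc_{FX}\circ f_X$ of 1-arrows in \NecIdxPos is exactly what makes $\id{}$ a 2-arrow $\ple{F,f\cdot\necmod^\Doc}\Rightarrow\ple{F,(\necmod^\aDoc F\op)\cdot f}$ in \IdxPos, all remaining coherence conditions and functoriality checks collapsing to identities. You merely spell out in more detail the routine verifications the paper dismisses as straightforward.
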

\begin{proof}
The proof is straightforward. 
The only interesting part is checking that it is well-defined on the 1-arrows. Indeed,
for each object $X$ in the base category \C of the doctrine $\Doc$, we have  
$f_X\cdot\necmod^\Doc_X\order\necmod^\aDoc_{FX}\cdot f_X$, by definition of 1-arrow
in \NecIdxPos. And this ensures that 
\twoAr{\id{}}{\ple{F,f\cdot \necmod^\Doc}}{\ple{F,(\necmod^\aDoc F\op)\cdot f}} is a
2-arrow in \IdxPos.
\end{proof}  
It is easy to see that the 2-functor $\MCFun$ is full and faithful. 
Hence the 2-category $\NecIdxPos$ is isomorphic to the 2-category of
vertical comonads in $\IdxPos$.

Now let \fun{\MAFun}{\NecIdxPos}{\Adj{\IdxPos}} be the composition 
$\xymatrix {\NecIdxPos \ar[r]^{\MCFun} & \Cmd{\IdxPos} \ar[r]^{\EMAFun} & \Adj{\IdxPos}}$
which sends an object \ple{\Doc,\necmod} in \NecIdxPos to the Eilenberg-Moore adjunction 
of the associated comonad $\MCFun(\Doc,\necmod)=\ple{\Id{\C},\necmod,\id{},\id{}}$
\[\xymatrix@=5em{
\necmod\Doc\ar@<.5ex>@/^/[r]^{\ple{\Id{\C},\ForgetLift}}\ar@{}[r]|{\bot}
&\Doc \ar@<.5ex>@/^/[l]^{\ple{\Id{\C},\necmod}}}\]
where, from the general construction in (\ref{maind}), the Eilenberg-Moore object
\fun{\necmod\Doc}{\C\op}{\Pos} for the comonad induced by $\necmod$ is 
$\necmod\Doc X=\set{\alpha \in \Doc X}{\alpha=\necmod_X\alpha}$. Also
$\DReIdx{\necmod\Doc}{f}=\DReIdx{\Doc}{f}$, and $\TNat{\ForgetLift}{\necmod\Doc}{\Doc}$ is
the inclusion.  

\begin{thm}\label{thm:2-adj-am-ma}
There is a local adjunction $\MAFun \dashv \AMFun$, where 
\begin{itemize}
\item the unit $\TNat{\Delta}{\Id{\NecIdxPos}}{\AMFun \cdot \MAFun}$ is the identity lax
2-natural transformation, and  
\item the counit $\TNat{\nabla}{\MAFun \cdot \AMFun}{\Id{\Adj{\IdxPos}}}$ is given, for
an adjunction $\adj=\ple{\Doc,\aDoc,\LAdj,\LAdjLift,\RAdj,\RAdjLift,\unit,\counit}$
where $\fun{\Doc}{\C\op}{\Pos}$ and $\fun{\aDoc}{\D\op}{\Pos}$, by 
$\nabla_\adj=\ple{\Id{\C},(\Doc\unit\op)\cdot(\RAdjLift\LAdj\op),\LAdj,\id{},\unit}$, as
in the following diagram
\[\xymatrix{
\necmod\aDoc\LAdj\op \ar[dd]_{\ple{\Id{\C},(\Doc\unit\op)\cdot(\RAdjLift\LAdj\op)}} 
\ar@/^8pt/[rr]^{\ple{\Id{\C},\ForgetLift}} \ar@{}[rr]|{\bot}  && 
\aDoc\LAdj\op \ar@/^8pt/[ll]^{\ple{\Id{\C},\necmod^\adj}} \ar[dd]^{\ple{\LAdj,\id{}}}  \\
 && \\
\Doc  \ar@/^8pt/[rr]^{\ple{\LAdj,\LAdjLift}} \ar@{}[rr]|{\bot} 
\xtwocell[rruu]{}<>{_\unit} 
&&  \aDoc \ar@/^8pt/[ll]^{\ple{\RAdj,\RAdjLift}} 
} \]
and, for each 1-arrow $\oneAr{\phi}{\adj}{\aadj}$,  
$\nabla_\phi = \ple{\id{},\id{}}$.  
\end{itemize}
\end{thm}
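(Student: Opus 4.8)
The plan is to read the assertion through the Betti--Power description of a local adjunction \cite{BettiP88}: for $\MAFun\dashv\AMFun$ it suffices to exhibit the lax $2$-natural unit $\Delta$ and counit $\nabla$ and to check that, for each object $N=\ple{\Doc,\necmod}$ of \NecIdxPos and each adjunction $\adj$ of \Adj{\IdxPos}, the two induced transposition functors
\[
\Psi\colon\NecIdxPos(N,\AMFun\adj)\to\Adj{\IdxPos}(\MAFun N,\adj),\qquad
\Phi\colon\Adj{\IdxPos}(\MAFun N,\adj)\to\NecIdxPos(N,\AMFun\adj)
\]
form an adjunction $\Psi\dashv\Phi$, where $\Psi(\chi)=\nabla_\adj\circ\MAFun(\chi)$ and $\Phi(\psi)=\AMFun(\psi)\circ\Delta_N$. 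The very first thing I would record is that $\AMFun\circ\MAFun=\Id{\NecIdxPos}$: unwinding \refToProp{cmd-to-modality} for the vertical comonad $\MCFun(N)=\ple{\Id{},\necmod,\id{},\id{}}$, its Eilenberg--Moore object is the doctrine of \nec elements and \AMFun reads the \comonadic operator back off as $\necmod$ again; the same bookkeeping on $1$- and $2$-arrows, using $(\ref{ontf})$ and $(\ref{twtf})$, gives equality on all cells. This legitimises taking $\Delta$ to be the identity and reduces $\Phi$ to $\psi\mapsto\AMFun(\psi)$.

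Next I would check that the counit is well defined. To see that $\nabla_\adj=\ple{\Id{\C},(\Doc\unit\op)(\RAdjLift\LAdj\op),\LAdj,\id{},\unit}$ is a $1$-arrow of \Adj{\IdxPos} I would verify the three clauses of \refToProp{catadj}: the underlying triple $\ple{\Id{\C},\LAdj,\unit}$ is a morphism of adjunctions in \Ct{Cat} (here $\unit$ is literally the unit of $\LAdj\dashv\RAdj$, and the comparison squares are the triangle laws); the two fibre components are $1$-arrows of \IdxPos, the right one being the change-of-base $1$-arrow $\ple{\LAdj,\id{}}$ of $(\ref{equfx})$ and the left one being the right-adjoint lift $(\Doc\unit\op)(\RAdjLift\LAdj\op)$ of the factorisation in \refToThm{adj-factorization}, restricted to the fibrewise sub-doctrine $\necmod^\adj\aDoc\LAdj\op$; and the mate/coincidence condition, which is a triangle-identity computation of the same shape as $(\ref{tadj})$. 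That $\nabla_\phi=\ple{\id{},\id{}}$ assembles $\nabla$ into a lax $2$-natural transformation is then routine naturality bookkeeping against the definition $(\ref{ontf})$ of \AMFun on $1$-arrows and the naturality of $\unit$.

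With $\Delta=\id$ in hand, the core is to prove $\Psi\dashv\Phi$, and the clean observation to exploit is that $\Phi\Psi=\Id$ holds strictly: since the left-adjoint base functor of $\MAFun\AMFun\adj$ is $\Id{\C}$ and the right-hand fibre component of $\nabla_\adj$ is $\id{}$, formula $(\ref{ontf})$ gives $\AMFun(\nabla_\adj)=\ple{\Id{\C},\id{}}=\id$ on $\AMFun\adj$, whence $\Phi\Psi(\chi)=\AMFun(\nabla_\adj)\circ\AMFun\MAFun(\chi)=\chi$. Thus the unit of the sought hom-adjunction is the identity, and what remains is a counit $\epsilon_\psi\colon\Psi\Phi\psi=\nabla_\adj\circ\MAFun\AMFun(\psi)\Rightarrow\psi$ satisfying $\Phi\epsilon=\id$ and $\epsilon\Psi=\id$; given $\Phi\Psi=\Id$, the transpose of any $\sigma\colon\Psi\chi\Rightarrow\psi$ is forced to be $\Phi\sigma$, so the entire universal property collapses to the single factorisation $\sigma=\epsilon_\psi\cdot\Psi\Phi\sigma$. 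I would obtain $\epsilon_\psi$ as the mediating $2$-cell furnished by the universal property of the Eilenberg--Moore construction \refToThm{em-obj}, which is exactly what governs comparison $2$-cells into the Eilenberg--Moore adjunction $\MAFun N$, and then verify the two triangle equalities componentwise.

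The main obstacle is precisely this last step: producing $\epsilon_\psi$ and proving the two triangle identities together with $2$-naturality in both $N$ and $\adj$, that is, checking that $\Psi\Phi$ is a coreflection onto the \emph{modal} adjunctions inside $\Adj{\IdxPos}(\MAFun N,\adj)$. This is where the interaction of the Eilenberg--Moore universality \refToThm{em-obj} with the vertical/cartesian factorisation \refToThm{adj-factorization} underlying both \AMFun and $\nabla$ carries all the weight; by contrast the remaining ingredients ($\Delta=\id$ being lax natural, $\nabla_\adj$ being a $1$-arrow, and $\Phi\Psi=\Id$) are direct computations, and the comparison already recorded in \refToProp{modality-comparison} is the object-level shadow of the counit $\nabla$.
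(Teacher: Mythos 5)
Your first three steps do track the paper: the identity $\AMFun\circ\MAFun=\Id{\NecIdxPos}$, the verification via \refToProp{catadj} that $\nabla_\adj$ is a 1-arrow of $\Adj{\IdxPos}$, and the lax 2-naturality of $\nabla$ are exactly the paper's ingredients (though the last of these, which you wave off as ``routine naturality bookkeeping'', is in fact the longest computation in the paper's proof: a chain of inequalities using naturality of $f$, the 2-arrow property of $\theta$, and the adjunction-homomorphism identities). The genuine gap is in your final step. You seek a hom-level adjunction $\Psi\dashv\Phi$ with identity unit (your computation $\Phi\Psi=\Id{}$ is correct) and a counit $\epsilon_\psi\colon\Psi\Phi\psi\Rightarrow\psi$; no such 2-cell exists in general, because the canonical comparison points the other way. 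Indeed, for $\psi=\ple{F,f,G,g,\theta}\colon\MAFun N\to\adj$, an object $X$ in the base of $N$ and a \nec element $\alpha$, the left-lift coincidence condition of \refToProp{catadj} forces $g_X(\alpha)=\LAdjLift_{FX}(f_X(\alpha))$, so the first fibre component of $\Psi\Phi\psi=\nabla_\adj\circ\MAFun(\AMFun(\psi))$ sends $\alpha$ to $\DReIdx{\Doc}{\unit_{FX}}\big(\RAdjLift_{\LAdj FX}(\LAdjLift_{FX}(f_X(\alpha)))\big)$, while that of $\psi$ sends it to $f_X(\alpha)$; the unit 2-arrow of $\adj$ gives $f_X(\alpha)\order\DReIdx{\Doc}{\unit_{FX}}\big(\RAdjLift_{\LAdj FX}(\LAdjLift_{FX}(f_X(\alpha)))\big)$, i.e.\ a 2-arrow $\psi\Rightarrow\Psi\Phi\psi$, and since 2-arrows of \IdxPos over an identity carrier are fibrewise inequalities, this cannot be reversed. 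Concretely, take both base categories to be $\One$, let $N$ be a singleton poset with the identity operator, $\Doc=\{0<1\}$, $\aDoc$ a singleton, $\LAdjLift$ the unique map, $\RAdjLift$ picking $1$, and $\psi$ given by $f$ picking $0$: then $\Psi\Phi\psi$ picks $1$, and a 2-arrow $\Psi\Phi\psi\Rightarrow\psi$ would require $1\order 0$. The Eilenberg--Moore universality of \refToThm{em-obj} produces comparison 1-arrows and factorisations of coalgebra-style 2-arrows; it cannot manufacture this nonexistent cell, so the step you yourself flag as carrying ``all the weight'' cannot be carried out.

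What does hold at the hom level is the adjunction in the opposite sense, $\Phi\dashv\Psi$: identity counit $\Phi\Psi=\Id{}$, and unit $\psi\Rightarrow\Psi\Phi\psi$ given by the lax-naturality cell $\nabla_\psi$ combined with the fact that $\nabla_{\MAFun N}$ is the identity. This is precisely the hom-level shadow of the data the paper checks---and the paper never descends to hom-categories at all. Its route is: once $\nabla_\adj$ is a 1-arrow and $\nabla$ is lax 2-natural, the two strict triangle identities $(\AMFun\nabla)(\Delta\AMFun)=\Id{\AMFun}$ and $(\nabla\MAFun)(\MAFun\Delta)=\Id{\MAFun}$ are immediate, because $\AMFun(\nabla_\adj)$ is the identity on $\AMFun(\adj)$ (essentially your $\Phi\Psi=\Id{}$ computation) and $\nabla_{\MAFun(\ple{\Doc,\necmod})}$ is the identity since $\MAFun(\ple{\Doc,\necmod})$ is the Eilenberg--Moore adjunction of the vertical comonad $\ple{\Id{},\necmod,\id{},\id{}}$. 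In short, you inverted the difficulty: the step you postponed as the main obstacle is, on the paper's route, the trivial one, and the form in which you posed it is false.
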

\begin{proof}
The fact that $\Delta$ is a well-defined lax 2-natural transformation is straightforward, since $\AMFun \cdot \MAFun = \Id{\NecIdxPos}$. 
We check that $\nabla_\adj$ is a 1-arrow from $\MAFun(\ple{\aDoc\LAdj\op,\necmod^\adj})$ to $\adj$. 
We have $\ple{\LAdj \circ \Id{\C},\LAdjLift \cdot (\Doc\unit\op)\cdot(\RAdjLift\LAdj\op)} = \ple{\LAdj \circ \Id{\C},\id{} \cdot \ForgetLift}$, since, for each object $X$ in \C and $\alpha \in \necmod\aDoc\LAdj\op X$, we have $\LAdjLift_X(\DReIdx{\Doc}{\unit_X}(\RAdjLift_{\LAdj X}(\alpha))) = \necmod^\adj_X \alpha = \alpha$, by definition of $\necmod\aDoc\LAdj\op$. 
Then, we have to check that $\twoAr{\unit}{\ple{\Id{\C} \circ \Id{\C},(\Doc\unit\op)\cdot (\RAdjLift\LAdj\op) \cdot \necmod^\adj}}{\ple{\RAdj\LAdj,(\RAdjLift\LAdj\op)\cdot \id{}}}$ is a 2-arrow in \IdxPos, 
but this holds because $\TNat{\unit}{\Id{\C}}{\RAdj\LAdj}$ is a natural transformation and , for each object $X$ in \C, $\necmod^\adj_X \order \id{\aDoc\LAdj\op X}$, hence we get 
$\DReIdx{\Doc}{\unit_X} \circ \RAdjLift_{\LAdj X} \circ \necmod^\adj_X \order \DReIdx{\Doc}{\unit_X} \circ \RAdjLift_{\LAdj X}$. 

Now, consider a 1-arrow $\oneAr{\phi = \ple{F, f, G, g, \theta}}{\adj}{\aadj}$ in $\Adj{\IdxPos}$;
hence, we have $\MAFun(\AMFun(\phi)) = \ple{F, g(\LAdj^\adj)\op, F, g(\LAdj^\adj)\op, \id{}}$, and
we have to show that 
\[
\twoAr{\nabla_\phi = \ple{\id{},\id{}}}{\ple{F, f, G, g, \theta} \circ \nabla_\adj}{\nabla_\aadj \circ \ple{F, g(\LAdj^\adj)\op, F, g(\LAdj^\adj)\op, \id{}} }
\]
is a 2-arrow in $\Adj{\IdxPos}$. 
To this end, it is enough to prove that
\[\twoAr{\id{}}
{\ple{F,f\cdot(\Doc^\adj(\unit^\adj)\op)\cdot(\RAdjLift^\adj(\LAdj^\adj)\op)}}
{\ple{F,(\Doc^\aadj(\unit^\aadj)\op F\op)\cdot
(\RAdjLift^\aadj(\LAdj^\aadj)\op F\op)\cdot(g(\LAdj^\adj)\op)}}\]
and 
\[\twoAr{\id{}}{\ple{G\LAdj^\adj,g(\LAdj^\adj)\op}}{\ple{\LAdj^\aadj F,g(\LAdj^\adj)\op}}\]
are 2-arrows in \IdxPos, since the other conditions are trivially satisfied as the two
components are identities. 
The second is a 2-arrow since, by definition of 1-arrow in $\Adj{\IdxPos}$, the equality  
$G\LAdj^\adj = \LAdj^\aadj F$ holds.  
To see that so is the first, consider the following inequalities for $X$ an object in \C:
\begin{align*}
f_X \circ \DReIdx{\Doc^\adj}{\unit^\adj_X} \circ \RAdjLift^\adj_{\LAdj^\adj X} 
&= \DReIdx{\Doc^\aadj}{F\unit^\adj_X} \circ f_{\RAdj^\adj\LAdj^\adj X} \circ \RAdjLift^\adj_{\LAdj^\adj X} 
&& f\text{ is natural} \\
&\order \DReIdx{\Doc^\aadj}{F\unit^\adj_X} \circ \DReIdx{\Doc^\aadj}{\theta_{\LAdj^\adj X}} \circ \RAdjLift^\aadj_{G\LAdj^\adj X} \cdot g_{\LAdj^\adj X} 
&& \theta \text{ is a 2-arrow in \IdxPos} \\
&= \DReIdx{\Doc^\aadj}{\unit^\aadj_{FX}} \circ \RAdjLift^\aadj_{\LAdj^\aadj FX} \circ g_{\LAdj^\adj X} 
&& (\theta\LAdj^\adj)(F\unit^\adj) = \unit^\aadj F \text{ and } G\LAdj^\adj = \LAdj^\aadj F 
\end{align*}
Finally, we have the check the adjunction triangular laws:
$(\AMFun \nabla)(\Delta \AMFun) =
\Id{\AMFun}$ and $(\nabla \MAFun)(\MAFun \Delta) = \Id{\MAFun}$. 
The former holds as $\AMFun (\nabla_\adj)$
is the identity on $\AMFun(\adj)$ for any adjunction $\adj$.
The latter holds because, for any object \ple{\Doc,\necmod} in \NecIdxPos,
$\nabla_{\MAFun(\ple{\Doc,\necmod})}$ is the 
identity on $\MAFun(\ple{\Doc,\necmod})$, since
$\MAFun(\ple{\Doc,\necmod})$ is the Eilenberg-Moore adjunction of the comonad
\ple{\Id{},\necmod,\id{},\id{}} on $\Doc$.
\end{proof}

Now recall that, by definition, we have $\MAFun = \EMAFun\cdot\MCFun$ and observe that
$\CmdFun\cdot\EMAFun=\Id{\Cmd{\IdxPos}}$. Hence  $\MCFun = \CmdFun \circ \MAFun$.
Therefore, $\MCFun \dashv \CMFun$ is a local adjunction, as stated in the following
corollary.

\begin{cor}\label{cor:picture}
There is a diagram of $($lax$)$ 2-adjunctions
\[\xymatrix@=6em{\Cmd{\IdxPos}\ar@/_/[r]_-{\EMAFun}\ar@{{}{}}[r]|-{\bot}
\ar@/^/[rd]^-{\CMFun}
&\Adj{\IdxPos}\ar@/_/[d]_-{\AMFun}\ar@/_/[l]_-{\CmdFun}\ar@{}[d]|*=0[@]{\bot}\\
&\NecIdxPos\ar@/_/[u]_-{\MAFun}\ar@{}[lu]|*=0[@]{\bot}
\ar@/^/[lu]^-{\MCFun} }\]
where the diagonal adjunction is the composite of the other two.
\end{cor}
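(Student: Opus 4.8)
The plan is to obtain the diagonal $\MCFun \dashv \CMFun$ directly as the composite of the two adjunctions already in hand: the strict $2$-adjunction $\CmdFun \dashv \EMAFun$ of diagram~(\ref{maind}) (available because $\IdxPos$ admits the Eilenberg--Moore construction for every comonad, \cite{BlackwellR:twodmt}), and the local adjunction $\MAFun \dashv \AMFun$ of \refToThm{2-adj-am-ma}. Chaining these along $\Adj{\IdxPos}$ produces an adjunction between $\NecIdxPos$ and $\Cmd{\IdxPos}$ whose left adjoint is $\CmdFun \circ \MAFun$ and whose right adjoint is $\AMFun \circ \EMAFun$; it then remains only to identify these composites with $\MCFun$ and $\CMFun$ and to confirm that the resulting structure is again a local adjunction.

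First I would check that the two endpoints agree with the named functors. The right adjoint is immediate: $\CMFun = \AMFun \circ \EMAFun$ holds by the very definition of $\CMFun$ as the composite displayed at the start of this section. For the left adjoint I would use that $\MAFun = \EMAFun \circ \MCFun$ by definition of $\MAFun$, together with the identity $\CmdFun \circ \EMAFun = \Id{\Cmd{\IdxPos}}$ --- the comonad associated with the Eilenberg--Moore adjunction of a comonad is that very comonad, so $\EMAFun$ is fully faithful with identity counit. Composing on the left with $\CmdFun$ then gives $\CmdFun \circ \MAFun = \CmdFun \circ \EMAFun \circ \MCFun = \MCFun$, exactly as recorded in the paragraph preceding the statement.

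Next I would assemble the composite adjunction by pasting. Its unit is $(\AMFun\,\eta\,\MAFun)\cdot\Delta$, where $\Delta$ is the (identity) unit of \refToThm{2-adj-am-ma} and $\eta$ is the unit of $\CmdFun\dashv\EMAFun$, and its counit is $\varepsilon\cdot(\CmdFun\,\nabla\,\EMAFun)$, where $\nabla$ is the counit of \refToThm{2-adj-am-ma} and $\varepsilon$ is the counit of $\CmdFun\dashv\EMAFun$. Both pastings simplify drastically since $\Delta=\id{}$ and $\varepsilon=\id{}$ (the latter by the fully faithfulness noted above). With this the two triangles of the corollary commute by construction and the diagonal is manifestly the composite of the other two, as claimed.

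The step I expect to be the main obstacle is the very last one: verifying that local adjunctions are closed under composition with strict $2$-adjunctions, i.e. that the lax triangular coherence defining a local adjunction is preserved by the pasting above. I would dispatch this by appealing to the composability results for local adjunctions in \cite{BettiP88}, viewing the strict $2$-adjunction $\CmdFun\dashv\EMAFun$ as the degenerate local adjunction whose comparison $2$-cells are invertible (indeed identities here); all remaining verifications reduce to routine whiskering of $2$-cells, further trivialised by $\Delta=\id{}$ and $\varepsilon=\id{}$.
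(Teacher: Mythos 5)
Your proposal is correct and follows essentially the same route as the paper: identify $\CMFun=\AMFun\circ\EMAFun$ by definition and $\MCFun=\CmdFun\circ\MAFun$ via $\MAFun=\EMAFun\circ\MCFun$ and $\CmdFun\circ\EMAFun=\Id{\Cmd{\IdxPos}}$, then compose the strict 2-adjunction $\CmdFun\dashv\EMAFun$ with the local adjunction $\MAFun\dashv\AMFun$ of \refToThm{2-adj-am-ma}. Your explicit pasting of units and counits and the appeal to \cite{BettiP88} for composability of local adjunctions merely spell out what the paper leaves implicit in the paragraph preceding the corollary.
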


Finally we refine \refToThm{adj-factorization}, providing a new factorization through the
doctrine $\necmod\aDoc\LAdj\op$.

\begin{thm}\label{thm:adj-factorization-2}
Let $\fun{\Doc}{\C\op}{\Pos}$ and $\fun{\aDoc}{\D\op}{\Pos}$ be doctrines and consider an adjunction $\ple{\LAdj, \LAdjLift, \RAdj, \RAdjLift, \unit, \counit}$ between them. 
Then, the following diagram (of adjunctions)
\[ \xymatrix@R+15pt@C+5pt{
&&& \ar@/_8pt/[llld]_*=/^8pt/[flip][@]{\ple{\Id{\C},(\Doc\unit\op)(\RAdjLift\LAdj\op)}} \necmod\aDoc\LAdj\op 
\ar@/_8pt/[dd]_{\ple{\Id{\C},\ForgetLift}} \ar@{}[dd]|{\dashv} 
\ar@/_8pt/[rrrd]_*=/^8pt/[@]{\ple{\LAdj,\ForgetLift}} \ar@{}[rrrd]|*=0[@]{\top} &&& \\ 
\Doc \ar@/_8pt/[rrru]_*=/^8pt/[@]{\ple{\Id{\C},\LAdjLift}}  \ar@{}[rrru]|*=0[@]{\top}  
\ar@/^8pt/[rrrd]^*=/^8pt/[@]{\ple{\Id{\C},\LAdjLift}} \ar@{}[rrrd]|*=0[@]{\bot}   &&&
&&& 
\aDoc \ar@/_8pt/[lllu]_*=/^8pt/[flip][@]{\ple{\RAdj,\necmod(\aDoc\counit\op)}} 
\ar@/^8pt/[llld]^*=/dl8pt/[flip][@]{\ple{\RAdj,\aDoc\counit\op}}   \\
&&& \ar@/^8pt/[lllu]^*=/dr8pt/[flip][@]{\ple{\Id{\C},(\Doc\unit\op)(\RAdjLift\LAdj\op)}} 
\aDoc\LAdj\op \ar@/_8pt/[uu]_{\ple{\Id{\C},\necmod}} 
\ar@/^8pt/[rrru]^*=/^8pt/[@]{\ple{\LAdj,\id{}}} \ar@{}[rrru]|*=0[@]{\bot}    &&& 
} \]
commutes. Moreover $\TNat{\LAdjLift}{\Doc}{\necmod\aDoc\LAdj\op}$ is surjective and
$\TNat{(\Doc\unit\op)(\RAdjLift\LAdj\op)}{\necmod\aDoc\LAdj\op}{\Doc}$ is injective. 
\end{thm}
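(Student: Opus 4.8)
The plan is to recognise the displayed hexagon as the insertion of the Eilenberg--Moore object of the idempotent interior operator $\necmod$ into the two--step factorisation of \refToThm{adj-factorization}. That theorem already supplies the bottom two adjunctions: the vertical adjunction $\ple{\Id{\C},\LAdjLift}\dashv\ple{\Id{\C},(\Doc\unit\op)(\RAdjLift\LAdj\op)}$ between $\Doc$ and $\aDoc\LAdj\op$, and the change--of--base adjunction $\ple{\LAdj,\id{}}\dashv\ple{\RAdj,\aDoc\counit\op}$ between $\aDoc\LAdj\op$ and $\aDoc$. By \refToCor{adj-to-modality} the comonad induced on $\aDoc\LAdj\op$ by the first of these is exactly $\necmod$, and by \refToProp{modality-to-cmd} it is a \emph{vertical} comonad whose Eilenberg--Moore object is, by the $\MAFun$ construction preceding \refToThm{2-adj-am-ma}, the doctrine of fixpoints $\necmod\aDoc\LAdj\op$, with Eilenberg--Moore adjunction $\ple{\Id{\C},\ForgetLift}\dashv\ple{\Id{\C},\necmod}$ sitting as the middle vertical adjunction. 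Thus the only genuinely new data are the top--left adjunction $\Doc\rightleftarrows\necmod\aDoc\LAdj\op$ and the two composites reaching $\aDoc$, the latter being automatic once the pieces are in place.

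The decisive step I would carry out first is to show that $\LAdjLift$ corestricts to the fibre of \nec elements. By the fibrewise part of \refToProp{adj-to-modality}, the vertical first factor gives, for each $X$, a posetal adjunction $\LAdjLift_X\dashv r_X$ between $\Doc X$ and $\aDoc\LAdj\op X$, where $r_X=\DReIdx{\Doc}{\unit_X}\circ\RAdjLift_{\LAdj X}$ and $\necmod_X=\LAdjLift_X\circ r_X$. The triangular identity for this posetal adjunction reads $\LAdjLift_X\circ r_X\circ\LAdjLift_X=\LAdjLift_X$, i.e. $\necmod_X\circ\LAdjLift_X=\LAdjLift_X$, so every $\LAdjLift_X\beta$ is a fixpoint of $\necmod_X$ and lies in $\necmod\aDoc\LAdj\op X$. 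Since $\necmod\aDoc\LAdj\op X$ is a \emph{full} suborder of $\aDoc\LAdj\op X$, the hom--inequality $\LAdjLift_X\beta\order\alpha\iff\beta\order r_X\alpha$ survives the restriction, so $\LAdjLift_X\dashv r_X$ restricts to an adjunction $\Doc X\rightleftarrows\necmod\aDoc\LAdj\op X$. Assembling these fibrewise adjunctions over $\C$ through \refToProp{idxpos-adj}—all base functors being $\Id{\C}$, this is pure bookkeeping—produces the \IdxPos adjunction $\ple{\Id{\C},\LAdjLift}\dashv\ple{\Id{\C},(\Doc\unit\op)(\RAdjLift\LAdj\op)}$ between $\Doc$ and $\necmod\aDoc\LAdj\op$.

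Next I would check commutativity of the hexagon, which is entirely the composition calculus of \IdxPos together with the fact that $\ForgetLift$ is the inclusion of fixpoints. The left--adjoint triangles commute on the nose: $\ple{\Id{\C},\ForgetLift}\circ\ple{\Id{\C},\LAdjLift}=\ple{\Id{\C},\LAdjLift}$ recovers the left adjoint of \refToThm{adj-factorization}, and $\ple{\LAdj,\id{}}\circ\ple{\Id{\C},\ForgetLift}=\ple{\LAdj,\ForgetLift}$ by the rule $\ple{FG,(fG\op)\cdot g}$ for composing $1$-arrows. Dually, $\ple{\Id{\C},\necmod}\circ\ple{\RAdj,\aDoc\counit\op}=\ple{\RAdj,\necmod(\aDoc\counit\op)}$ gives the composite right adjoint, and the two composite adjunctions touching $\aDoc$ are then adjunctions because they are composites of already established ones. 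No new universal property has to be verified here.

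Finally, the surjectivity and injectivity claims drop out of \refToProp{trivial-modality} applied to the new adjunction $\ple{\Id{\C},\LAdjLift}\dashv\ple{\Id{\C},(\Doc\unit\op)(\RAdjLift\LAdj\op)}$: its counit component $\LAdjLift_X\circ r_X$ is $\necmod_X$ restricted to $\necmod\aDoc\LAdj\op X$, which is the identity, so by \refToPropItem{trivial-modality}{2} each $r_X$ is injective and each $\LAdjLift_X$ is surjective; hence $\TNat{\LAdjLift}{\Doc}{\necmod\aDoc\LAdj\op}$ is surjective and $\TNat{(\Doc\unit\op)(\RAdjLift\LAdj\op)}{\necmod\aDoc\LAdj\op}{\Doc}$ is injective, componentwise. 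The one nontrivial point, and the main obstacle, is the corestriction in the second paragraph—establishing $\necmod_X\circ\LAdjLift_X=\LAdjLift_X$ so that $\LAdjLift$ genuinely factors through the Eilenberg--Moore fibre; once this is secured, everything else is the composition rules of \IdxPos and the cited results.
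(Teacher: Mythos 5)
Your proof is correct and is essentially the paper's own argument: the paper likewise derives commutativity from \refToThm{adj-factorization} together with \refToPropItem{trivial-modality}{1}, and obtains the surjectivity of $\LAdjLift$ and the injectivity of $(\Doc\unit\op)(\RAdjLift\LAdj\op)$ from \refToPropItem{trivial-modality}{2}, observing that $\necmod$ restricts to the identity on $\necmod\aDoc\LAdj\op$. One small point you leave implicit: commutativity of the left-hand triangle as a triangle of \emph{adjunctions} also requires its right-adjoint half, namely $r\circ\necmod=r$ as 1-arrows $\aDoc\LAdj\op\rightarrow\Doc$, where $r_X=\DReIdx{\Doc}{\unit_X}\circ\RAdjLift_{\LAdj X}$; this does not follow from the composition calculus alone, but it is precisely the second identity $r_X\circ\LAdjLift_X\circ r_X=r_X$ of \refToPropItem{trivial-modality}{1}---the dual of the triangular identity you invoke for the corestriction---so it is supplied by the same proposition and your argument is complete.
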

\begin{proof}
The commutativity of the diagram follows immediately from the definition of $\necmod$ and
\refToPropItem{trivial-modality}{1} and \refToThm{adj-factorization}. 
The fact that, for each object $X$, the function
$\fun{\LAdjLift_X}{\Doc X}{\necmod\aDoc\LAdj\op X}$ is
surjective and
$\fun{\DReIdx{\Doc}{\unit_X}\RAdjLift_{\LAdj X}}{\necmod\aDoc\LAdj\op X}{\Doc X}$ is
injective, follows from \refToPropItem{trivial-modality}{2}, noting that 
$\necmod_X = \LAdjLift_X \circ \DReIdx{\Doc}{\unit_X} \circ \RAdjLift_{\LAdj X}$ is the
identity on $\necmod\aDoc\LAdj\op X$ by definition.
\end{proof}

\newcommand{\inv}{^{\ast}} 
\newcommand{\proj}{\mathrm{pr}} 
\newcommand{\Str}{\ensuremath{\mathsf{Str}}\xspace}
\newcommand{\OFTree}{\ensuremath{\mathsf{Tr}}\xspace}
\newcommand{\N}{\mathbb{N}} 
\newcommand{\pfun}[3]{\ensuremath{{#1}:{#2}\rightharpoonup{#3}}}

\begin{exa}[Temporal Logics]
Consider the standard powerset doctrine \fun{\pw{}}{\Set\op}{\Pos}, sending a set $X$ to
the powerset $\pw{X}$ and a function \fun{t}{X}{Y} to the inverse image function
\fun{t\inv}{\pw{Y}}{\pw{X}}, and a 1-arrow \oneAr{\ple{F,f}}{\pw{}}{\pw{}}. 
Suppose that \fun{F}{\Set}{\Set} is an accessible functor, hence it admits a free comonad
(\cf \cite{GhaniLMP01}) \fun{\cmdfun^F}{\Set}{\Set}. We recall the construction in the
following.
\begin{itemize}
\item Given a set $A$, let $\cmdfun^F A = \nu X. A\times FX$ be the (underlying set of
the) final coalgebra for the functor
\[\xymatrix@C=4em@R=.1ex{\ct{Set}\ar[r]^-{A\times F}&\ct{Set}\\
X\ar@{|->}[r]&A\times FX}\]
and denote by \fun{\zeta_A}{\cmdfun^F A}{A\times F(\cmdfun^F A)} the structure map of the
final $A\times F$-coalgebra, which is an iso by the Lambek Lemma.
\item Since
\fun{\ple{\id{},\proj_2\circ\zeta_A}}{\cmdfun^FA}{\cmdfun^FA\times F(\cmdfun^FA)} is a
$\cmdfun^FA\times F$-coalgebra, there is a unique $\cmdfun^FA\times F$-coalgebra homomorphism
\fun{\cmu^F_A}{\cmdfun^FA}{\cmdfun^F\cmdfun^FA} such that the diagram
\[\xymatrix@C=4em@R=3em{\cmdfun^FA\ar[r]^-{\ple{\id{},\proj_2\circ\zeta_A}}\ar[d]_{\cmu^F_A}
&\cmdfun^FA\times F(\cmdfun^FA)\ar[d]^{\cmu^F_A\times\id{}}\\
\cmdfun^F\cmdfun^F A\ar[r]^-{\zeta_A}&\cmdfun^F\cmdfun^F\times F(\cmdfun^F\cmdfun^F A)}\]
commute.
\item Let \fun{\cun^F_A}{\cmdfun^FA}{A} be
$\cun^F_A=\proj_1\circ\zeta_A$.
\item Given a function \fun{t}{B}{A}, the function
\fun{\zeta_B}{\cmdfun^F B}{B\times F(\cmdfun^F B)} is a final $B\times F$-coalgebra; let
\fun{\cmdfun^F t}{\cmdfun^F B}{\cmdfun^F A} be the unique $A\times F$-homomorphism such
that the diagram
\[\xymatrix{\cmdfun^F B\ar[r]^{\zeta_B}\ar[d]_{\cmdfun^F t}
&B\times F(\cmdfun^F B)\ar[r]^{t\times \id{}}&A\times F(\cmdfun^F B)\ar[d]^{\cmdfun^F t}\\
\cmdfun^F A\ar[rr]^{\zeta_A}&&A\times F(\cmdfun^F A)\\}\]
commutes.
\end{itemize}
We can also define a natural transformation \TNat{\cmdlift_f}{\pw{}}{\pw{}{\cmdfun^F}\op} as
follows.
Consider a set $A$ and a subset $\alpha\in\pw{A}$.
We define a function \fun{\phi_\alpha}{\pw{\cmdfun^F A}}{\pw{\cmdfun^F A}} as 
$\phi_\alpha(\beta) = \zeta_A\inv(\alpha \times f_{\cmdfun^FA}(\beta))$, which is monotone
by construction, hence, since \pw{\cmdfun^FA} is a complete lattice, by the Knaster-Tarski
theorem, $\phi_\alpha$ has a greatest fixed point,
given by
$\nu\phi_\alpha=\bigcup\set{\beta\in\pw{\cmdfun^F A}}{\beta\subseteq\phi_\alpha(\beta)}$. 

Define $\cmdlift^f_A(\alpha)$ as $\nu\phi_\alpha$. 
This function is monotone, because, if $\alpha\subseteq\beta$, then $\nu\phi_\alpha =
\zeta_A\inv(\alpha\times f_{\cmdfun^FA}(\nu\phi_\alpha)) \subseteq \zeta_A\inv(\beta\times
f_{\cmdfun^FA}(\nu\phi_\alpha)) = \phi_\beta(\nu\phi_\alpha)$. Thus, by coinduction, we
get $\nu\phi_\alpha\subseteq\nu\phi_\beta$, as needed.  
In order to prove that $\cmdlift^f_A$ is natural in $A$, we have to check that, for each
function \fun{t}{B}{A} and $\alpha \in \pw{A}$,  
it is the case that $(\cmdfun^F t)\inv(\nu\phi_\alpha) = \nu\phi_{t\inf(\alpha)}$. 
First, note that 
\[\begin{split}
(\cmdfun^F t)\inv(\nu\phi_\alpha) &= (\cmdfun^F t)\inv(\zeta_A\inv(\alpha\times f_{\cmdfun^FA}(\nu\phi_\alpha))) \\ 
 &= (\zeta_A\circ\cmdfun^Ft)\inv (\alpha\times f_{\cmdfun^FA}(\nu\phi_\alpha)) \\ 
 &= ((\id{}\times F\cmdfun^Ft)\circ (t\times \id{}) \circ \zeta_B)\inv (\alpha\times f_{\cmdfun^FA}(\nu\phi_\alpha)) \\ 
 &= \zeta_B\inv (t\inv(\alpha)\times (F\cmdfun^Ft)\inv(f_{\cmdfun^FA}(\nu\phi_\alpha))) \\ 
 &= \zeta_B\inv (t\inv(\alpha)\times f_{\cmdfun^FB}((\cmdfun^Ft)\inv(\nu\phi_\alpha))) \\  
 &= \phi_{t\inv(\alpha)}((\cmdfun^F t)\inv(\nu\phi_\alpha)).
\end{split}\]
Hence, by coinduction, we get $(\cmdfun^Ft)\inv(\nu\phi_\alpha) \subseteq
\nu\phi_{t\inv(\alpha)}$. To prove the other inclusion,  we just have to prove that
$\cmdfun^Ft[\nu\phi_{t\inv(\alpha)}] \subseteq \nu\phi_\alpha$, where $\cmdfun^Ft[\beta]$
denotes the direct image of $\beta\in\pw{\cmdfun^FB}$ along $\cmdfun^Ft$.
To this end, we note that 
\[\begin{split}
\cmdfun^Ft[\nu\phi_{t\inv(\alpha)}] &
\subseteq \cmdfun^Ft[\zeta_B\inv(t\inv(\alpha)\times f_{\cmdfun^FB}(\nu\phi_{t\inv(\alpha)}))] \\
&=\cmdfun^Ft[((t\times\id{})\circ\zeta_B)\inv(\alpha\times f_{\cmdfun^FB}(\nu\phi_{t\inv(\alpha)}))] \\
&\subseteq \zeta_A\inv((\id{}\times F\cmdfun^Ft)[\alpha\times f_{\cmdfun^FB}(\nu\phi_{t\inv(\alpha)})]) \\ 
&= \zeta_A\inv(\alpha \times F\cmdfun^Ft[f_{\cmdfun^FB}(\nu\phi_{t\inv(\alpha)})]) \\
&\subseteq \zeta_A\inv(\alpha\times f_{\cmdfun^FA}(\cmdfun^Ft[\nu\phi_{t\inv(\alpha)}])) \\
&=\phi_\alpha(\cmdfun^Ft[\nu\phi_{t\inv(\alpha)}]).
\end{split}\]
To check that $\cmd^F = \ple{\cmdfun^F,\cmdlift^F,\cmu^F,\cun^F}$ is a comonad on \pw{}, it
is enough to show the following two inequalities: 
(1) $\cmdlift^F_A(\alpha) \subseteq (\cun^F_A)\inv(\alpha)$ and 
(2) $\cmdlift^F_A(\alpha) \subseteq
(\cmu^F_A)\inv(\cmdlift^F_{\cmdfun^FA}(\cmdlift^F_A(\alpha)))$ for all $\alpha\in\pw{A}$.

{\it Ad} (1) note that $\alpha\times f_{\cmdfun^FA}(\nu\phi_\alpha) \subseteq
\proj_1\inv(\alpha)$. Hence
$\nu\phi_\alpha = \zeta_A\inv(\alpha\times f_{\cmdfun^FA}(\nu\phi_\alpha)) \subseteq
\zeta_A\inv(\proj_1\inv(\alpha)) = (\cun^F_A)\inv(\alpha)$. 

{\it Ad} (2) we show $\cmu^F_A[\nu\phi_\alpha] \subseteq \nu\phi_{\nu\phi_\alpha}$. 
First of all, since $\alpha\times f_{\cmdfun^FA}(\nu\phi_\alpha) \subseteq
\proj_2\inv(f_{\cmdfun^FA}(\nu\phi_\alpha))$, we have
$\nu\phi_\alpha = \zeta_A\inv(\alpha\times f_{\cmdfun^FA}(\nu\phi_\alpha)) \subseteq
(\proj_2\circ\zeta_A)\inv(f_{\cmdfun^FA}(\nu\phi_\alpha))$.
Hence $\nu\phi_\alpha \subseteq 
\nu\phi_\alpha\cap (\proj_2\circ\zeta_A)\inv(f_{\cmdfun^FA}(\nu\phi_\alpha)) =
\ple{\id{},\proj_2\circ\zeta_A}\inv(\nu\phi_\alpha\times f_{\cmdfun^FA}(\nu\phi_\alpha))$. 
Therefore
\[\begin{split}
\cmu^F_A[\nu\phi_\alpha] &\subseteq  \cmu^F_A[ \ple{\id{},\proj_2\circ\zeta_A}\inv(\nu\phi_\alpha\times f_{\cmdfun^FA}(\nu\phi_\alpha)) ] \\
&\subseteq \zeta_{\cmdfun^FA}\inv( (\id{}\times F\cmu^F_A)[ \nu\phi_\alpha \times f_{\cmdfun^FA}(\nu\phi_\alpha) ]) \\
&= \zeta_{\cmdfun^FA}\inv( \nu\phi_\alpha \times F\cmu^F_A[ f_{\cmdfun^FA}(\nu\phi_\alpha) ] ) \\
&\subseteq \zeta_{\cmdfun^FA}\inv( \nu\phi_\alpha\times f_{\cmdfun^F\cmdfun^FA}( \cmu^F_A[\nu\phi_\alpha] ) ) \\
&= \phi_{\nu\phi_\alpha}(\cmu^F_A[\nu\phi_\alpha]).
\end{split}\]
Thus by coinduction we obtain (2).

Applying the construction in \refToProp{cmd-to-modality}, we obtain a
comonadic modal operator $\necmod^{\cmd^F}$ on the indexed poset
\fun{\aDoc}{(\EM{\Set}{\cmd^F})\op}{\Pos},
mapping a coalgebra \ple{A,c} for the comonad $\cmd^F$ to $\pw{A}$ and a coalgebra
morphism \fun{t}{\ple{B,d}}{\ple{A,c}} to the inverse image function
\fun{t\inv}{\pw{A}}{\pw{B}}. 
Explicitly, given a coalgebra \ple{A,c} and an element $\alpha\in\pw{A}$, we have 
$\necmod^{\cmd^F}_{\ple{A,c}} \alpha = c\inv(\cmdlift^F_A(\alpha)) = c\inv(\nu\phi_\alpha)$. 

This setting has a temporal interpretation: 
given the 1-arrow \ple{F,f}, the functor $F$ represents the ``branching type'', namely,
the branching structure of time, and $f$ lifts formulas to branches. 
The functor $\cmdfun^F$ models the whole time structure, that is, the present and all
possible futures, generated by the branching type $F$, and $\cmdlift^F$ lifts a formula to
time structures, basically, universally quantifying over time, according to $f$, roughly
saying that the formula holds in all possible future branches.
Given a coalgebra \ple{A,c} for the comonad $\cmd^F$, for each $x\in A$, $c(x)$ represents
the whole evolution of $x$  along time,  
hence, for each $\alpha\in\pw{A}$, we have $x\in \necmod^{\cmd^F}_{\ple{A,c}} \alpha$ if
all future evolutions of $x$ belongs to $\alpha$.
Therefore, roughly, $\necmod^{\cmd^F}$ is a generic kind of ``always'' modality, typical
of temporal logics. In the following we consider two explicit instances of this situation.
\end{exa}

\begin{exa}[Linear time] 
Consider $\ple{F,f}=\ple{\Id{},\id{}}$, that is, each instant has exactly one possible future. 
The free comonad is the stream comonad $\Str A = \nu X. A\times X = A^\omega$, mapping a set $A$ to the set $A^\omega$ of sequences of elements in $A$ indexed over natural numbers.
Given a sequence $a\in A^\omega$, we write $s_i$ to denote the $i$-th element of $s$, and $s[i..]$ to denote the sequence $r\in A^\omega$ such that $r_j = s_{j+i}$ for all $j\in\N$. 
Then, the counit maps $s$ to $s_0$ (the first element, namely the present) and the comultiplication maps $s$ to the sequence $(s[i..])_{i\in\N}$, namely the sequence of all suffixes of $s$. 

Let $\alpha\in\pw{A}$, we have $\cmdlift^F_A(\alpha) = \{ s\in A^\omega \mid s_i\in\alpha \mbox{ for all } i\in\N\}$, namely, the set of sequences where all elements belongs to/satisfies $\alpha$. 
Therefore, if \ple{A,c} is a coalgebra for \Str, $\necmod^\Str_{\ple{A,c}} \alpha = \{ x \in A \mid c(x)_i \in\alpha\mbox{ for all }i\in\N\}$, that is, it is the set of all elements $x\in A$ such that all its future instances (including the present one) belongs to $\alpha$. 

Therefore, $\necmod^\Str_{\ple{A,c}}$ provides a model for the ``globally'' ($\mathbf{G}$) modality of Linear Temporal Logic (LTL) \cite{BaierK08} and, moreover, 
the modality on the free coalgebra \ple{\Str A, \cmu^\Str_A} implements exactly the standard semantics of such a modality on infinite sequences.  
\end{exa}

\begin{exa} [Finitely ordered branching time]
Let  \fun{F}{\Set}{\Set} be the functor $FX = \bigcup_{n\in\N} X^n$. 
We can consider several natural transformations \TNat{f}{\pw{}}{\pw{}F\op} making
\ple{F,f} a 1-arrow. 
The two paradigmatic examples are the following: 
$f^\forall_A(\alpha) = \{ \ple{n,\ple{x_1,\ldots,x_n}} \in FX \mid x_i \in \alpha \mbox{ for all }  i \in 1..n\}$ and 
$f^\exists_A(\alpha) = \{ \ple{n,\ple{x_1,\ldots,x_n}} \in FX \mid x_i \in \alpha \mbox{ for some } i \in 1..n\}$.  

The free comonad is $\OFTree$, mapping a set $A$ to the set of finitely branching and
ordered trees labelled by $A$. 
Formally, such a tree is a partial function \pfun{t}{\N^\star}{A} with a
non-empty and prefix-closed domain such that, if $\ple{k_1,\ldots,k_n}\in \dom{t}$ and
$k\le k_n$, then $\ple{k_1,\ldots,k}\in \dom{t}$ (\cf \cite{Courcelle83,AczelAMV03}).  
The counit maps a tree $t$ to the label of its root, that is $t(\varepsilon)$, where
$\varepsilon$ is the empty sequence, and  
the comultiplication maps a tree $t$ to $\cmu^F_A(t)$ such that $\dom{\cmu^T_A(t)} =
\dom{t}$ and $\cmu^F_A(t)(u)$ is the subtree of $t$ rooted at $u\in\dom{t}$.  
The behaviour of the natural transformation $\cmdlift^F$ of course depends on $f$, for instance, 
for $f = f^\forall$, it maps $\alpha\in\pw{A}$ to the set of trees where all nodes have label in $\alpha$, while 
for $f = f^\exists$, it maps $\alpha\in\pw{A}$ to the set of trees containing an infinite path starting from the root where all nodes have label in $\alpha$. 

Then, given a coalgebra \ple{A,c} for the comonad \OFTree  and $\alpha\in\pw{A}$, we have
$x\in \necmod^\OFTree_{\ple{A,c}}\alpha$ if all nodes in $c(x)$ have label in $\alpha$,
when $f=f^\forall$, and if there is an infinite path in $c(x)$ where all nodes have label
in $\alpha$, when $f=f^\exists$. 
Therefore, $\necmod^\OFTree_{\ple{A,c}}$ provides a model for the modalities
``invariantly'' ($\mathbf{AG}$) and ``potentially always'' ($\mathbf{EG}$) of Computation
Tree Logic (CTL) \cite{BaierK08}, depending on the choice of $f$. 
\end{exa}

\section*{Acknowledgment}
The authors would like to thank Jacopo Emmenegger, Fabio Pasquali and Cosimo Perini Brogi
for many helpful discussions on the subject.

\appendix
\section{\Comonadic operators from linear-nonlinear adjunctions}\label{append}

A well-known approach to provide categorical semantics to the linear exponential modality
$\Lbang$---read as ``bang''---of propositional linear logic is by means of 
linear-nonlinear adjunctions as in \cite{Benton94}.  
A \dfn{linear-nonlinear adjuction} is a monoidal adjunction beween a symmetric monoidal
category and a cartesian category; the induced comonad on the symmetric monoidal
category interprets the bang modality.  
This notion is easily extended to doctrines where the construction in
\refToCor{adj-to-modality} provides a model of the bang modality in a higher order setting.  

In the present context, the role of the cartesian category is played by a
\dfn{primary doctrine}, that is, a doctrine
\fun{\Doc}{\C\op}{\Pos} 
where \C has finite products and, for each object $X$ in \C, the fiber $\Doc X$ carries an
inf-semilattice structure preserved by reindexing, see \eg \cite{EmmeneggerJ:eledac}.
The symmetric monoidal category turns into a
\emph{$($symmetric$)$ monoidal doctrine}, which we define below, following the definition
of monoidal indexed categories in \cite{MoellerV20}. We shall employ the 2-cartesian
structure of the 2-category \IdxPos. So, in the following, given indexed posets
\fun{\Doc}{\C\op}{\Pos} and \fun{\aDoc}{\D\op}{\Pos}, we denote by
\fun{\Doc\times\aDoc}{(\C\times\D)\op}{\Pos} the \emph{product} doctrine mapping a pair of
objects \ple{X,Y} to the product (in \Pos) 
$\Doc X\times\aDoc Y$ and acting similarly on arrows. Furthermore, we denote by $\One$ the
\emph{terminal} doctrine whose base is the terminal category and mapping its unique object
to the singleton poset. We shall write
\oneAr{\alpha_{P_1,P_2,P_3}}{P_1\times(P_2\times P_3)}{(P_1\times P_2)\times P_3},
\oneAr{\lambda_P}{\One\times P}{P}, \oneAr{\rho_P}{\One\times P}{P}, and
\oneAr{\sigma_{P_1,P_2}}{P_1\times P_2}{P_2\times P_1}  
for the usual 1-iso for associativity, left and right identity, and symmetry.

A \dfn{$($symmetric$)$ monoidal doctrine} consists of 
\begin{itemize}
\item a doctrine \fun{\aDoc}{\D\op}{\Pos}, 
\item two 1-arrows \oneAr{\ple{\otimes,\bullet}}{\aDoc\times\aDoc}{\aDoc} and 
\oneAr{\ple{I,\iota}}{\One}{\aDoc}, and 
\item four invertible 2-arrows 
\[\begin{array}{c@{\qquad}c}
\multicolumn2c{\twoAr{a}{\ple{\otimes,\bullet}\circ
(\ple{\otimes,\bullet}\times\ple{\Id{},\id{}})\circ\alpha_{Q,Q,Q}}
{\ple{\otimes,\bullet}\circ(\ple{\Id{},\id{}}\times\ple{\otimes,\bullet})}}\\[1ex]
\twoAr{l}{\ple{\otimes,\bullet}\circ(\ple{I,\iota}\times\ple{\Id{},\id{}})}
{\lambda_Q}&
\twoAr{r}{\ple{\otimes,\bullet}\circ(\ple{\Id{},\id{}}\times\ple{I,\iota})}
{\rho_Q}\\[1ex]
\multicolumn2c{\twoAr{s}{\ple{\otimes,\bullet}\circ\sigma_{Q,Q}}{\ple{\otimes,\bullet}}}
\end{array}\]
\end{itemize}
such that \ple{\D,\otimes,I,a,l,r,s} is a symmetric monoidal category. 
As the 2-arrows $a$, $l$, $r$ and $s$ are invertible, the inequalities they induce on the
fibres are actually equalities, namely, the following diagrams commute
\[\xymatrix@C=5em@R=1em{(\aDoc A \times \aDoc B) \times \aDoc C
\ar[dd]_{\bullet_{A,B} \times \id{}} \ar[r]^{(\alpha_{Q,Q,Q})_{A,B,C}}
&\aDoc A \times (\aDoc B \times \aDoc C) \ar[rd]^{\id{} \times \bullet_{B,C}}\\
&&\aDoc A \times \aDoc (B\otimes C) \ar[dd]^{\bullet_{A,B\otimes C}}\\
\aDoc (A\otimes B) \times \aDoc C \ar[dr]^{\bullet_{A\otimes B,C}} \\
&\aDoc ((A\otimes B)\otimes C) \ar[r]^{\DReIdx{\aDoc}{(a_{A,B,C})}} &
\aDoc (A\otimes (B\otimes C))}\]
\[\xymatrix{
\aDoc 1 \times A \ar[d]_{(\lambda_Q)_A} \ar[r]^{\ple{\iota,\id{}}} 
& \aDoc I \times \aDoc A \ar[d]^{\bullet_{I,A}}\\
\aDoc A \ar[r]^-{\DReIdx{\aDoc}{(l_A)}} & \aDoc (I\otimes A)}\qquad
\xymatrix{\aDoc A \times 1 \ar[d]_{(\rho_Q)_A} \ar[r]^{\ple{\id{},\iota}} 
& \aDoc A \times \aDoc I \ar[d]^{\bullet_{A,I}}\\
  \aDoc A \ar[r]^-{\DReIdx{\aDoc}{(r_A)}}& \aDoc (A\otimes I)}\]
\[\xymatrix{\aDoc A \times \aDoc B \ar[d]_{(\sigma_Q)_{A,B}} \ar[r]^{\bullet_{A,B}} 
& \aDoc (A\otimes B) \ar[d]^{\DReIdx{\aDoc}{(s_{A,B})}} \\
\aDoc B \times \aDoc A \ar[r]^{\bullet_{B,A}} & \aDoc(B\otimes A)}\]
Note that a primary doctrine \fun{\Doc}{\C\op}{\Pos} is a monoidal doctrine with 
\oneAr{\ple{\times,\sqcap}}{\Doc\times\Doc}{\Doc} and \oneAr{\ple{1,\top_1}}{\One}{\Doc}, where 
$1$ is the terminal object and $\top_1$ is the top element in $\Doc 1$, 
$\times$ is the binary product in the category and $\sqcap$ is defined, for all objects
$X,Y$ in \C, by 
$\sqcap_{X,Y} = \Land_{X\times Y}\circ(\DReIdx{\Doc}{\pi_1}\times\DReIdx{\Doc}{\pi_2})$,
where \fun{\pi_1}{X\times Y}{X} and \fun{\pi_2}{X\times Y}{Y} are the projections.

Now, consider a primary doctrine \Doc and a monoidal doctrine \aDoc.
An adjunction \ple{\Doc,\aDoc,\LAdj,\LAdjLift,\RAdj,\RAdjLift,\unit,\counit} is said to be \emph{monoidal} if $\LAdj$ and $\RAdj$ are lax monoidal functors and $\unit$ and $\counit$ are monoidal natural trasformations, that is, we have the following additional structure: 
\begin{itemize}
\item two 2-arrows \twoAr{u}{\ple{I,\iota}}{\ple{\LAdj\LAdjLift}\circ\ple{1,\top}} and \twoAr{\phi}{\ple{\otimes,\bullet}\circ (\ple{\LAdj,\LAdjLift}\times\ple{\LAdj,\LAdjLift})}{\ple{\LAdj,\LAdjLift}\circ\ple{\times,\sqcap}}, that is, \fun{u}{I}{\LAdj 1} and, for all objects $X,Y$ in \C, \fun{\phi_{X,Y}}{\LAdj X \otimes \LAdj Y}{\LAdj(X\otimes Y)} are arrows in \D, and 
\item two 2-arrows \twoAr{v}{\ple{1,\top}}{\ple{\RAdj,\RAdjLift}\circ\ple{I,\iota}} and \twoAr{\psi}{\ple{\times,\sqcap}\circ(\ple{\RAdj,\RAdjLift}\times\ple{\RAdj,\RAdjLift})}{\ple{\RAdj,\RAdjLift}\circ\ple{\times,\sqcap}}, that is, \fun{v}{1}{\RAdj I} and, for all objects $A,B$ in \D, \fun{\psi_{A,B}}{\RAdj A \times \RAdj B}{\RAdj(A\times B)} are arrows in \C, and 
\item the following diagrams commute
\[\xymatrix{
X\times Y \ar[d]_{\unit_X\times\unit_Y} \ar[rr]^{\id{X\times Y}}&&X\times Y \ar[d]^{\unit_{X\times Y}}\\
\RAdj\LAdj X \times \RAdj\LAdj Y \ar[r]^{\psi_{\LAdj X,\LAdj Y}} 
&\RAdj(\LAdj X \otimes \LAdj Y) \ar[r]^{\RAdj\phi_{X,Y}} & \RAdj\LAdj(X\times Y)}\]
\[\xymatrix{\LAdj\RAdj A\otimes\LAdj\RAdj B\ar[d]_{\counit_A\otimes\counit_B}\ar[r]^{\phi_{\RAdj A,\RAdj B}}&
\LAdj(\RAdj A \times \RAdj B) \ar[r]^{\LAdj\psi_{A,B}} 
&\LAdj\RAdj(A\otimes B) \ar[d]^{\counit_{A\otimes B}} \\
A\otimes B \ar[rr]^{\id{A\otimes B}} && A\otimes B}\]
\[\xymatrix{
1 \ar[d]_{\unit_1} \ar[r]^{v} & \RAdj I \ar[r]^{\RAdj u}& \RAdj\LAdj 1\\
\RAdj\LAdj 1 \ar[urr]_{\id{\RAdj\LAdj 1}} &&  &}\qquad
\xymatrix{I\ar[drr]_{\id{I}}\ar[r]^{u}&\LAdj1\ar[r]^{\LAdj v}&\LAdj\RAdj I\ar[d]^{\counit_I} \\ 
& & I}\] 
and the following inequalities on the fibres: 
\[\xymatrix{
\aDoc A \times \aDoc B \ar@{}[rdd]|{\le} \ar[r]^{\bullet_{A,B}} \ar[d]_{\RAdjLift_A\times\RAdjLift_B} & \aDoc(A\otimes B) \ar[dd]^{\RAdjLift_{A\otimes B}} \\ 
\Doc\RAdj A\times \Doc\RAdj B  \ar[d]_{\sqcap_{\RAdj A,\RAdj B}} & \\
\Doc (\RAdj A \times \RAdj B)  & \Doc \RAdj (A\otimes B) \ar[l]_{\DReIdx{\Doc}{\psi_{A,B}}} \\
1 \ar@{}[dr]|{\le} \ar[d]_{\top_1} \ar[r]^{\iota} & \aDoc I \ar[d]^{\RAdjLift_I} \\
\Doc 1  &  \Doc \RAdj I  \ar[l]_{\DReIdx{\Doc}{v}} 
}
\qquad
\xymatrix{
\Doc X \times \Doc Y \ar@{}[ddr]|{\le} \ar[r]^{\sqcap_{X,Y}} \ar[d]_{\LAdjLift_X\times\LAdjLift_Y} & \Doc(X\times Y) \ar[dd]^{\LAdjLift_{X\times Y}} \\ 
\aDoc\LAdj X\times \aDoc\LAdj Y  \ar[d]_{\bullet_{\LAdj X,\LAdj Y}} & \\
\aDoc (\LAdj X \otimes \LAdj Y) & \aDoc \LAdj (X\times Y) \ar[l]_{\DReIdx{\aDoc}{\phi_{X,Y}}} \\
1 \ar@{}[dr]|{\le} \ar[d]_{\iota} \ar[r]^{\top_1} & \Doc 1 \ar[d]^{\LAdjLift_1} \\ 
\aDoc I & \aDoc \LAdj 1 \ar[l]_{\DReIdx{\aDoc}{u}} 
} \]
\end{itemize} 
From general results about monoidal adjunctions between categories, we know that $u$ and
$\phi$ are (natural) isos. Hence the inequalities on the left-hand side are
equalities, that is, those diagrams commute.

Consider now the doctrine \fun{\aDoc\LAdj\op}{\C\op}{\Pos}. 
By \refToCor{adj-to-modality}, there is \acomonadic  operator
\TNat{\Lbang}{\aDoc\LAdj\op}{\aDoc\LAdj\op} defined as
$\Lbang = \LAdjLift \cdot (\Doc\unit\op) \cdot \RAdjLift\LAdj\op$. 
However, in this richer context, $\aDoc\LAdj\op$ has a richer structure. 
First of all \C has finite products, hence, for each object $X$ in \C, there are arrows
\fun{\zeta}{X}{1} and \fun{\Delta_X}{X}{X\times X} natural in $X$. 
Then, we can define a monoid structure on $\aDoc\LAdj\op X$ as the two composite arrows 
\[\xymatrix{1\ar@<-1ex>@/_/[rrr]_-{e_X} \ar[r]^{\iota}&\aDoc I \ar[r]^{\DReIdx{\aDoc}{u^{-1}}}
&\aDoc(\LAdj 1) \ar[r]^{\DReIdx{\aDoc}{\LAdj\zeta_X}} &\aDoc(\LAdj X)}\]
\[\xymatrix@C=7ex{\aDoc(\LAdj X)\times\aDoc(\LAdj X)\ar@<-1ex>@/_/[rrr]_-{\ast_X}
\ar[r]^{\bullet_{\LAdj X,\LAdj X}} & \aDoc (\LAdj X\otimes \LAdj X) 
\ar[r]^{\DReIdx{\aDoc}{\phi_{X,X}^{-1}}}  
& \aDoc(\LAdj(X\times X)) \ar[r]^{\DReIdx{\aDoc}{\LAdj \Delta_X}} & \aDoc(\LAdj X).}\]
It follows that \ple{\aDoc\LAdj\op X,\ast_X,e_X} is a commutative monoid and that such
structure is preserved by reindexing.  
This structure interprets the multiplicative conjunction of linear logic and its unit. 
To ensure that $\Lbang$ correctly interprets the ``bang'' modality of linear logic,
four properties, in addition to those of \comonadic operators, are required to hold:
for each object $X$ in \C and $\alpha, \beta  \in \aDoc(\LAdj X)$, 
\[\begin{array}{ll@{\qquad}ll}
(1)&\Lbang_X\alpha \order e_X&
(2)&\Lbang_X\alpha \order \Lbang_X\alpha \ast_X \Lbang_X\alpha\\[1ex]
(3)&e_X \order \Lbang_X e_X&
(4)&\Lbang_X \alpha \ast_X \Lbang_X \beta \order \Lbang_X (\alpha \ast_X \beta).
\end{array}\]
\begin{enumerate}
\item Note that $\DReIdx{\Doc}{\unit_X}(\RAdjLift_{\LAdj X}(\alpha)) \in \Doc X$, which is
an inf-semilattice with top element $\top_X$, hence
$\DReIdx{\Doc}{\unit_X}(\RAdjLift_{\LAdj X}(\alpha)) \order \top_X=
\DReIdx{\Doc}{\zeta_X}(\top_1)$, because reindexing preserves the inf-semilattice
structure.
Therefore, we get 
$\Lbang_X \alpha = \LAdjLift_X(\DReIdx{\Doc}{\unit_X}(\RAdjLift_{\LAdj X}(\alpha))) \order
\LAdjLift_X(\DReIdx{\Doc}{\zeta_X}(\top_1))=
\DReIdx{\aDoc}{\LAdj\zeta_X}(\LAdjLift_1(\top_1)) = e_X$, by naturality of $\LAdjLift$ and
one of the diagrams above.  
\item Again, note that $\DReIdx{\Doc}{\unit_X}(\RAdjLift_{\LAdj X}(\alpha)) \in \Doc X$,
which is an inf-semilattice, hence $\DReIdx{\Doc}{\unit_X}(\RAdjLift_{\LAdj X}(\alpha))
\order \DReIdx{\Doc}{\unit_X}(\RAdjLift_{\LAdj X}(\alpha)) \Land_X
\DReIdx{\Doc}{\unit_X}(\RAdjLift_{\LAdj X}(\alpha))$.  
Since $\pi_i \circ \Delta_X = \id{X}$, using naturality of $\Land$, we get 
\[\begin{split}
\DReIdx{\Doc}{\unit_X}(\RAdjLift_{\LAdj X}(\alpha)) 
&\order \DReIdx{\Doc}{\Delta_X}( \DReIdx{\Doc}{\pi_1}(\DReIdx{\Doc}{\unit_X}(\RAdjLift_{\LAdj X}(\alpha))) \Land_{X\times X} \DReIdx{\Doc}{\pi_2}(\DReIdx{\Doc}{\unit_X}(\RAdjLift_{\LAdj X}(\alpha))) ) \\
&= \DReIdx{\Doc}{\Delta_X}( \DReIdx{\Doc}{\unit_X}(\RAdjLift_{\LAdj X}(\alpha)) \sqcap_{X,X} \DReIdx{\Doc}{\unit_X}(\RAdjLift_{\LAdj X}(\alpha)) )
\end{split}\]
Therefore, applying $\LAdjLift_X$ and using one of the diagrams above we get 
\[\begin{split}
\Lbang_X\alpha &= \LAdjLift_X(\DReIdx{\Doc}{\unit_X}(\RAdjLift_{\LAdj X}(\alpha))) \\
&\order \LAdjLift_X(\DReIdx{\Doc}{\Delta_X}( \DReIdx{\Doc}{\unit_X}(\RAdjLift_{\LAdj X}(\alpha)) \sqcap_{X,X} \DReIdx{\Doc}{\unit_X}(\RAdjLift_{\LAdj X}(\alpha)) ) \\
&= \DReIdx{\aDoc}{\LAdj\Delta_X}( \LAdjLift_{X\times X}( \DReIdx{\Doc}{\unit_X}(\RAdjLift_{\LAdj X}(\alpha)) \sqcap_{X,X} \DReIdx{\Doc}{\unit_X}(\RAdjLift_{\LAdj X}(\alpha)) ) ) \\ 
&= \LAdjLift_X(\DReIdx{\Doc}{\unit_X}(\RAdjLift_{\LAdj X}(\alpha))) \ast_X \LAdjLift_X(\DReIdx{\Doc}{\unit_X}(\RAdjLift_{\LAdj X}(\alpha))) \\
&= \Lbang_X\alpha \ast_X \Lbang_X\alpha
\end{split}\]
\item By one of the diagrams above, naturality of $\LAdjLift$ and the fact that reindexing in $\Doc$ preserves the inf-semilattice structure, we have $e_X = \LAdjLift_X(\top_X)$. 
Furthermore, since \twoAr{\unit}{\ple{\Id{},\id{}}}{\ple{\RAdj\LAdj,(\RAdjLift\LAdj\op)\LAdjLift}} is a 2-arrow in \IdxPos, we get 
\[ e_X = \LAdjLift_X(\top_X) \order \LAdjLift_X(\DReIdx{\Doc}{\unit_X}(\RAdjLift_{\LAdj X}(\LAdjLift_X(\top_X)))) = \Lbang_X e_X \] 
\item Using the diagrams above and the definitions of $\ast_X$ and $\Lbang_X$ we get 
\[\begin{split} 
\Lbang_X \alpha \ast_X \Lbang_X \beta &= (\LAdjLift_X(\DReIdx{\Doc}{\unit_X}(\RAdjLift_{\LAdj X}(\alpha)))) \ast_X (\LAdjLift_X(\DReIdx{\Doc}{\unit_X}(\RAdjLift_{\LAdj X}(\beta)))) \\ 
&= \DReIdx{\aDoc}{\LAdj\Delta_X} (\LAdjLift_{X\times X}(
  \DReIdx{\Doc}{\unit_X}(\RAdjLift_{\LAdj X}(\alpha)) \sqcap_{X,X} \DReIdx{\Doc}{\unit_X}(\RAdjLift_{\LAdj X}(\beta)) 
)) \\
&= \LAdjLift_X(\DReIdx{\Doc}{\Delta_X}(\DReIdx{\Doc}{(\unit_X\times\unit_X)}(
  \RAdjLift_{\LAdj X}(\alpha) \sqcap_{\RAdj\LAdj X,\RAdj\LAdj X} \RAdjLift_{\LAdj X}(\beta)
)))  \\
&\order \LAdjLift_X(\DReIdx{\Doc}{\Delta_X}(\DReIdx{\Doc}{(\unit_X\times\unit_X)}(
  \DReIdx{\Doc}{\psi_{\LAdj X,\LAdj X}}(\RAdjLift_{\LAdj X \otimes\LAdj X}(\alpha \bullet_{\LAdj X,\LAdj X} \beta))  
))) 
\end{split}\] 
From one of the diagrams above, we have $\psi_{\LAdj X,\LAdj X} \circ (\unit_X \times\unit_X) = \RAdj\phi_{X,X}^{-1} \circ \unit_{X\times X}$, hence we get 
\[\begin{split}
\Lbang_X \alpha \ast_X \Lbang_X \beta  
&\order \LAdjLift_X(\DReIdx{\Doc}{\Delta_X}(\DReIdx{\Doc}{(\unit_X\times\unit_X)}(
  \DReIdx{\Doc}{\psi_{\LAdj X,\LAdj X}}(\RAdjLift_{\LAdj X \otimes\LAdj X}(\alpha \bullet_{\LAdj X,\LAdj X} \beta)) 
)))  \\ 
&= \LAdjLift_X(\DReIdx{\Doc}{\Delta_X}(\DReIdx{\Doc}{\unit_{X\times X}}(
  \DReIdx{\Doc}{\RAdj\phi_{X,X}^{-1}}(\RAdjLift_{\LAdj X \otimes\LAdj X}(\alpha \bullet_{\LAdj X,\LAdj X} \beta))
)))  \\ 
&= \LAdjLift_X(\DReIdx{\Doc}{\unit_X}(
  \RAdjLift_{\LAdj X}(\DReIdx{\aDoc}{\LAdj \Delta_X}(\DReIdx{\aDoc}{\phi_{X,X}^{-1}}(\alpha \bullet_{\LAdj X,\LAdj X} \beta)))
))  \\ 
&= \Lbang_X(\alpha\ast_X\beta) 
\end{split}\] 
\end{enumerate}

\end{document}